\theoremstyle{plain}
\newtheorem{theorem}{Theorem}[section]
\newtheorem{corollary}[theorem]{Corollary}
\newtheorem{lemma}[theorem]{Lemma}
\newtheorem{proposition}[theorem]{Proposition}
\theoremstyle{definition}
\newtheorem{definition}[theorem]{Definition}
\newtheorem{remark}[theorem]{Remark}
\numberwithin{equation}{section}
\def\BM{\mathrm{BM}}
\def\Br{\mathrm{Br}}
\def\Pic{\mathrm{Pic}}
\def\Aut{\mathrm{Aut}}
\newcommand{\hide}[1]{}
\newcommand{\dome}[1]{\hide{#1}}
\newcommand{\inv}{^{-1}} 
\newcommand{\C}{\mathcal{C}}
\newcommand{\U}{\mathcal{U}}
\newcommand{\BC}{{}^B\C}
\newcommand{\LC}{{}^L\C}
\newcommand{\FC}{{}^F\C}
\newcommand{\D}{\mathcal{D}}
\newcommand{\E}{\mathcal{E}}
\newcommand{\mO}{\mathcal{O}}
\newcommand{\R}{\mathcal{R}}
\newcommand{\yd}{{}_H^H\mathcal{YD}}
\newcommand{\dy}{\mathcal{YD}_{H^*}^{H^*}}
\newcommand{\mr}{{}_H^R\mathcal{M}}
\newcommand{\mri}{\tensor*[^{R\inv}_H]{\mathcal{M}}{}} 
\newcommand{\ot}{\otimes}
\newcommand{\ra}{\rightarrow}
\newcommand{\lra}{\longrightarrow}
\newcommand{\va}{\varepsilon}
\newcommand{\st}{\ | \ }
\newcommand{\ab}{\alpha(B)}
\newcommand{\am}{\alpha(M)}
\newcommand{\om}{\omega(M)}
\newcommand{\aam}{\alpha_A(M)}
\newcommand{\oam}{\omega_A(M)}
\newcommand{\bL}{\beta(L)}
\newcommand{\btc}{\boxtimes_{\C}}
\newcommand{\oA}{\overline{A}}
\newcommand{\oB}{\overline{B}}
\newcommand{\ol}[1]{\overline{#1}}
\newcommand{\oa}{\overline{a}}
\newcommand{\ob}{\overline{b}}
\newcommand{\hm}[1]{_{(#1)}}
\newcommand{\hrm}[1]{_{( #1)}}
\newcommand{\blm}[1]{^{[#1]}}
\newcommand{\Blm}[1]{#1^{[-1]}\otimes #1^{[0]}}
\newcommand{\brm}[1]{^{[ #1 ] }}
\newcommand{\Brm}[1]{#1^{[ 0 ] } \otimes #1^{[ 1 ]}}
\newcommand{\by}[1]{\tag*{by \eqref{#1}}}
\newcommand{\canbr}{can_+}
\newcommand{\gammar}{\gamma}
\newcommand{\morph}{\xi}
\newcommand{\bg}[1]{\mathrm{BiGal}(#1)}
\newcommand{\gqc}[1]{\mathrm{Gal}^{qc}(#1)}
\newcommand{\autbr}{\tensor*[]{\mathrm{Aut}}{^{br}_{}}(\yd,\mr)} 
\newcommand{\autbra}{\tensor*[]{\mathrm{Aut}}{^{br}_{\scriptscriptstyle {(\mathbb{A})}}}(\yd,\mr)} 
\newcommand{\autaf}[2]{\tensor*[]{\mathrm{Aut}}{_{\scriptscriptstyle {(\mathbb{A})}}}(#1,#2)}
\newcommand{\ct}{\square}
\newcommand{\gnotc}[1]{\gnot{\hspace{4mm}#1}}
\newcommand{\ppi}{\tilde{\pi}}
\newcommand{\M}{\mathcal{M}}
\newcommand{\N}{\mathcal{N}}
\newcommand{\Ae}{A^e}
\newcommand{\op}{^{op}}
\newcommand{\HM}{{}_H\M}
\newcommand{\MHs}{\M^{H^*}}
\newcommand{\lhu}{\leftharpoonup}
\newcommand{\tr}{{}_RH}
\newcommand{\ud}{\underline{\Delta}}
\newcommand{\uS}{\underline{S}}
\newcommand{\uR}{\underline{R}}
\newcommand{\galqc}{\mathrm{Gal}^{qc}(\tr)}
\newcommand{\trbicom}{\tensor[^{\tr}]{\overline{(\HM)}}{^{\tr}}}
\newcommand{\trcom}{\tensor[^{\tr}]{(\HM)}{}}
\newcommand{\ff}{F}
\begin{document}
\title[Tensor autoequivalences and equivariant Brauer groups] {Braided autoequivalences and the equivariant Brauer group of a quasitriangular Hopf algebra}
\author{Jeroen Dello}
\address{Department of Mathematics, University of Hasselt,\\
 Agoralaan 1, 3590 Diepenbeek, Belgium}
\email{dellojeroen@gmail.com}
\author{Yinhuo Zhang}
\address{Department of Mathematics, University of Hasselt,\\
 Agoralaan 1, 3590 Diepenbeek, Belgium}
\email{yinhuo.zhang@uhasselt.be}
\date{}

\maketitle

\begin{abstract}
Let $(H, R)$ be a finite dimensional quasitriangular Hopf algebra over a field $k$, and $_H\mathcal{M}$ the representation category of $H$. In this paper, we study the braided autoequivalences of the Drinfeld center $^H_H\mathcal{YD}$ trivializable on $_H\mathcal{M}$. We establish a group isomorphism between the group of those autoequivalences and the group of quantum commutative bi-Galois objects of the transmutation braided Hopf algebra $\tr$. We then apply this isomorphism to obtain a categorical interpretation of the exact sequence of the equivariant Brauer group $\mathrm{BM}(k, H,R)$ in \cite{zhang}.  To this aim, we have to develop the braided bi-Galois theory initiated by Schauenburg in \cite{schauenburgbr1,schauenburgbr2}, which generalizes the Hopf bi-Galois theory over usual Hopf algebras to the one over braided Hopf algebras in a braided monoidal category.
\end{abstract}

\section*{Introduction}

Let $(H, R)$ be a finite dimensional quasitriangular Hopf algebra over a field $k$, and $\C$ the representation category of $H$. In \cite{zhang}, the second author studied the equivariant Brauer group $\mathrm{BM}(k,H,R)$ of $H$-Azumaya algebras and established an exact sequence:
\begin{equation}\label{exactss}
1 \lra \Br(k) \lra \BM(k,H,R) \overset{\ppi}\lra \gqc{\tr}
\end{equation}
where  $\gqc{\tr}$ is the group of quantum commutative $\tr$-bi-Galois objects. In \cite{ZZ}, it was showed that the group $\gqc{\tr}$ is embedded into the  group $\mathrm{Aut}^{br}(\mathcal{Z(C)}:\C)$ of the braided autoequivalences of the Drinfeld center $\mathcal{Z(C)}$ trivializable on $\C$. The embedding is based on the crucial observation \cite[Theorem 2.5]{ZZ} that the Drinfeld center $\mathcal{Z(C)}$, isomorphic to the Yetter-Drinfeld $H$-module category $^H_H\mathcal{YD}$, is isomorphic to the comodule category $^{\tr}\C$, where $\tr$ is the transmutation braided Hopf algebra in $\C$. That is, every bi-Galois object over $\tr$ defines a $\C$-linear autoequivalence of $^{\tr}\C$ which preserves the braiding of $^{\tr}\C$, and hence giving a braided autoequivalence of the Drinfeld center $\mathcal{Z(C)}$ trivializable on $\C$. 

Now a natural question arises: is every such a braided autoequivalence given by a quantum commutative bi-Galois object over $\tr$. The answer is affirmative. To prove this, we have to develop the braided bi-Galois theory initiated by Schauenburg in \cite{schauenburgbr1,schauenburgbr2}. 

Assume $\C$ is a braided monoidal category, $L$ and $B$ are two (flat) Hopf algebras in $\C$ and $A$ is a (faithfully flat) $L$-$B$-bi-Galois object in $\C$. Schauenburg showed in \cite{schauenburgbr2} that the cotensor functor $A \ct_B - : {}^B\C \ra {}^L\C$ defines a tensor equivalence of categories. To show the converse, we observe that the categories ${}^L\C$ and ${}^B\C$ are naturally right $\C$-module categories and the cotensor functor $A \ct_B - $ is a right $\C$-module functor (or shortly said, right $\C$-linear). Moreover, 
the monoidal functor $A \ct_B -$ satisfies some diagram which more or less reflects the compatibility between the monoidal structure of the functor and the braiding when one operand in ${}^B\C$ has a trivial $B$-comodule structure, see Lemma \ref{lemmaalphaaa}.
We show that these two conditions are also sufficient  for a tensor equivalence of the form ${}^B\C \ra {}^L\C$ to induce a (faithfully flat) bi-Galois object. To be more precise, if $\alpha : \BC \rightarrow \LC$ is a tensor equivalence functor which is right $\C$-linear (or trivializable on $\C$) and satisfies the compatibility diagram $(\mathbb{A})$ in Lemma \ref{lemmaalphaaa}, then $\ab$ is a faithfully flat $L$-$B$-bi-Galois object in $\C$.
As a consequence, we obtain a group isomorphism between the group of faithfully flat $B$-bi-Galois BiGal$(B)$ and the group of tensor autoequivalences of $\BC$ which are right $\C$-linear and satisfy the aforementioned diagram. This isomorphism restricts to an isomorphism between the subgroup Aut$^{bc}(\BC)$ of braided (tensor) autoequivalences of $\BC$ and the subgroup $\gqc{\tr}$ of quantum commutative $B$-bi-Galois objects thanks to \cite[Corollary 3.12]{ZZ}. Therefore, we obtain a categorical interpretation of the group $\gqc{\tr}$ in the exact sequence (\ref{exactss}).

Subsequently, we would like to characterise the the Brauer group $\BM(k,H,R)$ and the group morphism $\ppi: \BM(k,H,R) \ra \gqc{\tr}$ in the sequence (\ref{exactss}) in terms of braided autoequivalences of $\yd$ trivializable on $\mr$. Inspired by the work of Davydov and Nikshych in \cite{davydovnikshych} where the authors study the Brauer-Picard group  BrPic$(\C)$ of a finite tensor category $\C$, which consists of equivalence classes of invertible $\C$-bimodule categories (see \cite{eno}),  we consider the Brauer-Picard group of the braided category $^R_H\mathcal{M}$. In \cite{davydovnikshych}, the authors
showed that there exists a group isomorphism from BrPic$(\C)$ to  Aut$^{br}(\mathcal{Z}(\C))$, the group of braided autoequivalences of $\mathcal{Z}(\C)$. Note that the isomorphism holds when $\C$ is a linear fusion category over an algebraically closed field. In general, we are not sure whether it's true. In case $k$ is not algebraically closed, the two groups are not nicessary isomorphic.  If $\C$ is also braided, one can consider the subgroup Pic$(\C)$ of BrPic$(\C)$ consisting of isomorphism classes of one-sided \mbox{invertible} $\C$-module categories. The isomorphism from BrPic$(\C)$ to Aut$^{br}(\mathcal{Z}(\C))$ restricts to an isomorphism from Pic$(\C)$ to Aut$^{br}(\mathcal{Z}(\C),\C)$. 

Now suppose $A$ is an Azumaya algebra in $\C$. The category $\C_A$ has a natural structure of a left $\C$-module category. As observed in \cite{davydovnikshych}, the Picard group of $\C$ is isomorphic to the group of Morita equivalence classes of exact Azumaya algebras. In case $\C=\HM$ and $(H,R)$ is a finite dimensional quasitriangular Hopf algebra, 
we show that any Azumaya algebra in $\HM$ is exact. Thus, the Picard group of $\HM$ is isomorphic to the Brauer group $\BM(k,H,R)$. In other words, we obtain a categorical characterization for the group $\BM(k,H,R)$. As a result of the foregoing identification, we are able to give a categorical characterization for the morphism \mbox{$\ppi : \BM(k,H,R) \ra \gqc{\tr}$} as well. This will be the second main result of this paper, that is, we have  a commutative diagram of group morphisms (see Theorem \ref{bmautbrasequence}):
\begin{diagram}[size=2em]
1	& \rTo	&	\Br(k)	&\rTo	& \BM(k,H,R)			&		\rTo^{\phantom{aa} \ppi}		&		\gqc{\tr}	\\
	&		&	\dTo^{\sim}&		&	\dTo^{\sim}		&										&		\dTo_{\sim}		\\
1	& \rTo	&	\Pic({}_k\M)	&\rTo	&	\Pic(\HM)		&		\rTo^{\tau}							&		\autbr
\end{diagram}

As mentioned before, the idea behind relating the morphism $\ppi$ to a morphism Pic$(\HM) \ra \autbra$ is partly inspired by \cite{davydovnikshych}, but our approach to the construction of the morphism comes from \cite[Lemma 4.5]{zhang}, and they may not be the same. If, however, the two constructions would coincide, we would obtain that the morphism $\BM(k,H,R) \ra \gqc{\tr}$ is surjective (under suitable conditions). At the moment, this is still an open problem. Note that the morphism $\BM(k,H,R) \ra \gqc{\tr}$ has a kernel while Davydov and Nikshych obtain an isomorphism Pic$(\C) \ra \Aut^{br}(\mathcal{Z}(\C),\C)$ in \cite{davydovnikshych}. However, they work over an algebraically closed field (of characteristic 0), in which case the classical Brauer group $\Br(k)$ is trivial and hence the morphism $\BM(k,H,R) \ra \gqc{\tr}$ is already injective. \hide{Relation with the morphism $\morph$}

\section{Preliminaries}\label{secprelim}

\subsection{Braided bi-Galois objects}
We will assume the reader is familiar with the theory of braided monoidal categories and with the notions of algebras, bialgebras, Hopf algebras, (co)modules and (co)module algebras in braided monoidal categories. For the details, the reader is referred to \cite{Maj}. Moreover, by Mac Lane's coherence theorem, we may and will assume the monoidal categories we work with are strict, i.e., the associativity and unity constraints are the identity.
\\ 
In the following, $(\C,\phi,I)$ denotes a strict braided monoidal category with (co-)equalizers. We will make use of graphical calculus using the following notation
$$
\phi_{M,N} \,
=
\gbeg{2}{3}
\got{1}{M}\got{1}{N}\gnl
\gbr\gnl
\gob{1}{N}\gob{1}{M}
\gend
\quad
\text{and}
\quad
\phi\inv_{N,M} \,
=
\gbeg{2}{3}
\got{1}{N}\got{1}{M}\gnl
\gibr\gnl
\gob{1}{M}\gob{1}{N}
\gend
$$
for the braiding and its inverse. Let $A$ be an algebra and $C$ a coalgebra in $\C$, we denote the multiplication and unit of $A$, and the comultiplication and counit of $C$ by
$$
\nabla_A \,
=
\gbeg{2}{3}
\got{1}{A}\got{1}{A}\gnl
\gmu\gnl
\gob{2}{A}
\gend
\quad , \quad
\eta_A \,
=
\gbeg{1}{3}
\gvac{1}\gnl
\gu{1}\gnl
\gob{1}{A}
\gend
\quad
\text{and}
\quad
\Delta_C \,
=
\gbeg{2}{3}
\got{2}{C}\gnl
\gcmu\gnl
\gob{1}{C}\gob{1}{C}
\gend 
\quad , \quad
\epsilon_C \,
=
\gbeg{1}{3}
\got{1}{C}\gnl
\gcu{1}\gnl
\gnl
\gend
$$
For a left $A$-module $M \in {}_A\C$ respectively a right $A$-module $M \in \C_A$, we denote
$$
\mu^- \,
=
\gbeg{2}{3}
\got{1}{A}\got{1}{M}\gnl
\glm\gnl
\gvac{1}\gob{1}{M}
\gend
\quad
\text{resp.}
\quad
\mu^+ \,
=
\gbeg{2}{3}
\got{1}{M}\got{1}{A}\gnl
\grm\gnl
\gob{1}{M}
\gend
$$
Similar, for a left $C$-comodule $N\in {}^C\C$ and a right $C$-comodule $N\in\C^C$ we use the following notation
$$
\chi^- \,
=
\gbeg{2}{3}
\gvac{1}\got{1}{N}\gnl
\glcm\gnl
\gob{1}{C}\gob{1}{N}
\gend
\quad
\text{resp.}
\quad
\chi^+ \,
=
\gbeg{2}{3}
\got{1}{N}\gnl
\grcm\gnl
\gob{1}{N}\gob{1}{C}
\gend
$$
Finally, if $B$ is a Hopf algebra in $\C$, we denote the antipode $S$ (and its inverse, if it exists) by
$$
S \,
=
\gbeg{1}{5}
\got{1}{B}\gnl
\gcl{1}\gnl
\gmp{+}\gnl
\gcl{1}\gnl
\gob{1}{B}
\gend
\quad
\text{and}
\quad
S\inv \,
=
\gbeg{1}{5}
\got{1}{B}\gnl
\gcl{1}\gnl
\gmp{-}\gnl
\gcl{1}\gnl
\gob{1}{B}
\gend
$$
Let $B$ be a Hopf algebra in $\C$. The cotensor product $M \square_B N$ of a right $B$-comodule $M$ and a left $B$-comodule $N$ in $\C$ is defined as the equalizer of $\chi^+_M \otimes N$ and $M \otimes \chi^-_N$
\[
M \square_B N \longrightarrow M \otimes N \; \substack{\longrightarrow \\ \longrightarrow} \; M \otimes B \otimes N
\]
Let $A$ be a $B$-comodule algebra in $\C$ (say with comodule structure denoted by $\chi$). The coinvariant subobject $A^{coB}$ of $A$ is given by the equalizer of $\chi$ and $A\otimes \eta_B$
\[
A^{coB} \stackrel{\iota}{\longrightarrow} A \; \substack{\longrightarrow \\ \longrightarrow} \; A \otimes B
\]
An object $X$ in $\C$ is called flat if tensoring with $X$ preserves equalizers. We say $X$ is faithfully flat if tensoring with $X$ reflects isomorphisms.
\begin{definition}\hide{[\cite{schauenburgbr1,schauenburgbr2}]}
Let $A$ be a right $B$-comodule algebra in $\C$. $A$ is said to be a right $B$-Galois object in $\C$ if $\eta_A : I \rightarrow A$ is the equalizer of $\chi$ and $A\otimes \eta_B$ (i.e., $A^{coB} = I$), and the canonical morphism $\canbr = (\nabla_A \otimes B)(A\otimes \chi^+) : A \otimes A \rightarrow A \otimes B$ is an isomorphism. Similarly we can define left $B$-Galois objects.
\end{definition}
Let $A$ be a right $B$-Galois object, let's denote
\[
\gammar = (B \overset{\eta_A \otimes B}\longrightarrow A \otimes B \overset{(\canbr)\inv}\longrightarrow A \otimes A)
\]
The morphism $\gammar$, being a partial inverse to $\canbr$, satisfies several identities, which we will list in the lemma below. Proofs can be found in \cite[Remark 3.2 and Lemma 3.4]{schauenburgbr1}.

\begin{lemma}\label{lemmagammaproperties}Let $A$ be a right $B$-Galois object in $\C$. Then
\begin{align*}
\gbeg{4}{6}
\got{3}{B}\gnl
\gvac{1}\gcl{1}\gnl
\glmpb\gnot{\gamma}\gcmpt\grmpb\gnl
\gcl{1}\gvac{1}\grcm\gnl
\gwmu{3}\gcl{1}\gnl
\gob{3}{A}\gob{1}{A}
\gend
=
\gbeg{2}{5}
\gvac{1}\got{1}{B}\gnl
\gvac{1}\gcl{1}\gnl
\gu{1}\gcl{1}\gnl
\gcl{1}\gcl{1}\gnl
\gob{1}{A}\gob{1}{B}
\gend
&\quad , \quad 
\gbeg{3}{5}
\got{1}{A}\gnl
\grcm\gnl
\gcl{1}\gnotc{\gamma}\glmptb\grmpb\gnl
\gmu\gcl{1}\gnl
\gob{2}{A}\gob{1}{A}
\gend
=
\gbeg{2}{5}
\gvac{1}\got{1}{A}\gnl
\gvac{1}\gcl{1}\gnl
\gu{1}\gcl{1}\gnl
\gcl{1}\gcl{1}\gnl
\gob{1}{A}\gob{1}{A}
\gend
\\
\gbeg{4}{5}
\got{3}{B}\gnl
\gvac{1}\gcl{1}\gnl
\glmpb\gnot{\gamma}\gcmpt\grmpb\gnl
\gcl{1}\gvac{1}\grcm\gnl
\gob{1}{A}\gvac{1}\gob{1}{A}\gob{1}{B}
\gend
=
\gbeg{4}{5}
\gvac{2}\got{1}{B}\gnl
\gvac{1}\gwcm{3}\gnl
\glmpb\gnot{\gamma}\gcmpt\grmpb\gcl{1}\gnl
\gcl{1}\gvac{1}\gcl{1}\gcl{1}\gnl
\gob{1}{A}\gvac{1}\gob{1}{A}\gob{1}{B}
\gend
&\quad , \quad
\gbeg{3}{6}
\got{3}{B}\gnl
\gvac{1}\gcl{1}\gnl
\glmpb\gnot{\gamma}\gcmpt\grmpb\gnl
\grcm\gcl{1}\gnl
\gibr\gcl{1}\gnl
\gob{1}{B}\gob{1}{A}\gob{1}{A}
\gend
=
\gbeg{4}{6}
\got{3}{B}\gnl
\gvac{1}\gcl{1}\gnl
\gwcm{3}\gnl
\gmp{+}\glmpb\gnot{\gamma}\gcmpt\grmpb\gnl
\gcl{1}\gcl{1}\gvac{1}\gcl{1}\gnl
\gob{1}{B}\gob{1}{A}\gvac{1}\gob{1}{A}
\gend
\\
\gbeg{3}{5}
\got{1}{B}\gvac{1}\got{1}{B}\gnl
\gwmu{3}\gnl
\glmpb\gnot{\gamma}\gcmpt\grmpb\gnl
\gcl{1}\gvac{1}\gcl{1}\gnl
\gob{1}{A}\gvac{1}\gob{1}{A}
\gend
=
\gbeg{4}{7}
\got{1}{B}\gvac{2}\got{1}{B}\gnl
\gcl{1}\gvac{2}\gcl{1}\gnl
\gnotc{\gamma}\glmptb\grmpb\gnotc{\gamma}\glmpb\grmptb\gnl
\gcl{1}\gbr\gcl{1}\gnl
\gbr\gmu\gnl
\gmu\gcn{2}{1}{2}{2}\gnl
\gob{2}{A}\gob{2}{A}
\gend
&\quad , \quad
\gbeg{3}{4}
\gvac{1}\gu{1}\gnl
\glmpb\gnot{\gamma}\gcmpt\grmpb\gnl
\gcl{1}\gvac{1}\gcl{1}\gnl
\gob{1}{A}\gvac{1}\gob{1}{A}
\gend
=
\gbeg{2}{4}
\gnl
\gu{1}\gu{1}\gnl
\gcl{1}\gcl{1}\gnl
\gob{1}{A}\gob{1}{A}
\gend
\end{align*}\hide{
$$
\gbeg{3}{5}
\gvac{1}\got{1}{B}\gnl
\gvac{1}\gcl{1}\gnl
\glmpb\gnot{\gamma}\gcmpt\grmpb\gnl
\gwmu{3}\gnl
\gvac{1}\got{1}{A}
\gend
=
\gbeg{2}{5}
\gvac{1}\got{1}{B}\gnl
\gvac{1}\gcl{1}\gnl
\gu{1}\gcu{1}\gnl
\gcl{1}\gnl
\got{1}{A}
\gend
$$
}
\end{lemma}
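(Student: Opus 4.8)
The plan is to derive all six identities from the single hypothesis that $\canbr = (\nabla_A \otimes B)(A \otimes \chi^+)$ is an isomorphism, hence in particular a monomorphism. Each of the identities other than the first equates two morphisms whose common target has the shape $Y_1 \otimes A \otimes A \otimes Y_2$ (with $Y_1$ or $Y_2$ possibly trivial or carrying a further $B$), so to prove it I would post-compose both sides with the instance of $\canbr$ acting on that distinguished copy of $A \otimes A$ and reduce the two resulting morphisms to a common form. The only inputs needed are: (i) the definition $\gammar = (\canbr)^{-1}(\eta_A \otimes B)$, which gives at once $\canbr \circ \gammar = \eta_A \otimes \mathrm{id}_B$ (this is the top-left identity); (ii) left $A$-linearity of $\canbr$, i.e. $\canbr(\nabla_A \otimes A) = (\nabla_A \otimes B)(A \otimes \canbr)$, which is just associativity of $\nabla_A$; (iii) that $\chi^+_A$ is a morphism of algebras with $\chi^+_A \circ \eta_A = \eta_A \otimes \eta_B$, since $A$ is a $B$-comodule algebra; and (iv) coassociativity and counitality of $\Delta_B$ and of $\chi^+_A$.

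Carrying this out: from (ii) one checks that $(\nabla_A \otimes A)(A \otimes \gammar)$ is a right inverse of $\canbr$, so $(\canbr)^{-1} = (\nabla_A \otimes A)(A \otimes \gammar)$; pre-composing $(\canbr)^{-1} \circ \canbr = \mathrm{id}_{A \otimes A}$ with $\eta_A \otimes \mathrm{id}_A$ and with $\eta_A \otimes \eta_A$, and using (iii) that $\canbr \circ (\eta_A \otimes A) = \chi^+_A$ and $\canbr \circ (\eta_A \otimes \eta_A) = \eta_A \otimes \eta_B$, yields the top-right identity and the unit identity $\gammar \circ \eta_B = \eta_A \otimes \eta_A$. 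Applying $\mathrm{id}_A \otimes \epsilon_B$ to the top-left identity and using counitality of $\chi^+_A$ records the auxiliary relation $\nabla_A \circ \gammar = \epsilon_B\,\eta_A$, needed below. For the right-colinearity identity (second row, left) I would post-compose with $\canbr \otimes \mathrm{id}_B$: coassociativity of $\chi^+_A$ gives $(\canbr \otimes B)(A \otimes \chi^+_A) = (A \otimes \Delta_B)\circ\canbr$, so the left side becomes $(A \otimes \Delta_B)(\canbr\circ\gammar) = \eta_A \otimes \Delta_B$ while the right side becomes $((\canbr\circ\gammar)\otimes B)\Delta_B = \eta_A \otimes \Delta_B$, and monicity of $\canbr \otimes \mathrm{id}_B$ finishes it.

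The multiplicativity identity (third row, left) I would prove by post-composing with $\canbr$: multiplicativity of $\chi^+_A$ splits the $B$-coaction on the outgoing right leg across the two copies of $\gammar$, after which the right-colinearity identity reorganizes the factors and the auxiliary relation $\nabla_A\circ\gammar = \epsilon_B\,\eta_A$ contracts them (absorbing the resulting $\epsilon_B$ via counitality of $\Delta_B$), leaving $\eta_A \otimes \nabla_B$, which is exactly $\canbr\circ\gammar\circ\nabla_B$. Finally, the identity carrying the antipode (second row, right) is most efficiently obtained formally from the colinearity, multiplicativity, unit and counit identities already in hand — exactly as one derives the corresponding property of the translation map in classical Hopf--Galois theory — with $S$ entering through the defining identity $\nabla_B(S\otimes\mathrm{id}_B)\Delta_B = \eta_B\,\epsilon_B$; a direct check is also possible by post-composing with $\mathrm{id}_B\otimes\canbr$, so that the right side collapses to $(S\otimes\eta_A\otimes\mathrm{id}_B)\Delta_B$, and then tracking the resulting convolution identity.

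\textbf{Main obstacle.} The difficulty is not conceptual but the bookkeeping of braidings: the ambient category is braided, not symmetric, so every rearrangement of strands carries an instance of $\phi$ or $\phi^{-1}$ — in particular the comodule-algebra axiom for $A$ itself inserts $\phi_{B,A}$ between the two coactions — and all of these must be tracked faithfully in the graphical calculus. This bites hardest in the antipode identity, where one has to confirm that the $\phi^{-1}$ displayed on its left side is precisely the braiding produced by the chain of reductions; getting the strands into the right order there is the only place where the argument is not essentially automatic.
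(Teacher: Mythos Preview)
Your strategy is correct and is essentially the standard one: the paper itself does not give a proof of this lemma but simply cites \cite[Remark~3.2 and Lemma~3.4]{schauenburgbr1}, where the identities are established by exactly the method you describe --- using that $\canbr\circ\gammar=\eta_A\otimes B$ together with left $A$-linearity of $\canbr$, coassociativity of $\chi^+$, and the comodule-algebra axiom, and then cancelling $\canbr$ (or $\canbr\otimes\mathrm{id}$) from both sides. Your identification of the braiding bookkeeping in the antipode identity as the only nontrivial step matches Schauenburg's treatment, where that identity is singled out as requiring a separate computation.
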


\hide{
Note that \eqref{lemmagamma1} and \eqref{lemmagamma2} are equivalent with saying that the inverse of $can^+$ is given by
$$(can^+)\inv\;
=
\gbeg{3}{5}
\got{1}{A}\got{1}{H}\gnl
\gcl{1}\gcl{1}\gnl
\gcl{1}\gnotc{\gamma}\glmptb\grmpb\gnl
\gmu\gcl{1}\gnl
\gob{2}{A}\gob{1}{A}
\gend
$$
}

\subsection{Module categories}

Following \cite{ostrik} we recall the definition of module categories.
\begin{definition}\label{defmodcat}
Let $\C$ be a monoidal category.  A \emph{left module category over $\C$}\index{module category} is a category $\M$ equipped with
\begin{itemize}
\item a bifunctor $\ast : \C \times \M \ra \M,\ (X,M) \mapsto X \ast M$,
\item natural associativity isomorphisms $m_{X,Y,M} : (X\ot Y)\ast M \ra X\ast (Y\ast M)$,
\item unit isomorphisms $l_M : I \ast M \ra M$ such that
\end{itemize}
\begin{diagram}
((X\ot Y)\ot Z)\ast M & \rTo^{a_{X,Y,Z}\ast M} & (X\ot (Y\ot Z))\ast M \\
 & & \dTo_{m_{X,Y\ot Z,M}} \\
\dTo^{m_{X\ot Y,Z,M}} & &  X \ast ((Y\ot Z) \ast M) \\
& & \dTo_{X \ast m_{Y,Z,M}} \\
(X\ot Y)\ast (Z\ast M) & \rTo^{m_{X,Y,Z\ast M}} & X\ast (Y\ast (Z\ast M))
\end{diagram} 
\begin{diagram}[nohug]
(X\ot I)\ast M & & \rTo^{m_{X,I,M}} & & X \ast (I \ast M)	\\
& \rdTo_{r_X \ast M} & & \ldTo_{X\ast l_M} & \\
& & X \ast M & &
\end{diagram}
are commutative diagrams for $X,Y,Z \in \C$ and $M\in \M$.

Equivalently, $\M$ is left module category over $\C$ if there is given a monoidal functor $\C \ra End(\M)$, where $End(\M)$ is the monoidal category of endofunctors of $\M$ (product is given by composition of functors).

\emph{Right $\C$-module categories} can be defined similarly. Let $\D$ be another monoidal category. $\M$ is said to be a \emph{$(\C,\D)$-bimodule}\index{module category!bimodule category} category if $\M$ is simultaneously a left $\C$-module category and right $\D$-module category, together with natural isomorphisms $\gamma_{X,M,Y} : (X\ast M)\ast Y \ra X\ast (M\ast Y)$ satisfying certain compatibility axioms, cf. \cite[Proposition 2.12]{greenough}.

A \emph{$\C$-module functor}\index{module category!module functor} between left $\C$-module categories $\M$ and $\N$ is a pair $(F,\theta)$, where $F : \M \rightarrow \N$ is a functor and $\theta : F(- \ast -) \ra - \ast F(-)$ is a natural isomorphism such that the following diagrams commute
\begin{diagram}
F((X \ot Y)\ast M) & \rTo^{F(m_{X,Y,M})} & F(X\ast (Y\ast M)) \\
 & & \dTo_{\theta_{X,Y\ast M}} \\
\dTo^{\theta_{X\ot Y,M}} & &  X \ast F(Y\ast M) \\
& & \dTo_{X \ast \theta_{Y,M}} \\
(X\ot Y)\ast F(M) & \rTo^{m_{X,Y,F(M)}} & X\ast (Y\ast F(M))
\end{diagram} 
\begin{diagram}[nohug]
F(I \ast M) & & \rTo^{F(l_M)} & & X \ast F(M)	\\
& \rdTo_{\theta_{I,M}} & & \ruTo_{l_{F(M)}} & \\
& & I \ast F(M) & &
\end{diagram}
for $X,Y \in \C$ and $M\in \M$.
\end{definition}

\subsection{The Brauer group}

Let $H$ be a $k$-Hopf algebra. Denote by $\yd$ the category of left-left Yetter-Drinfel'd modules. It is braided via
\begin{align*}
&\phi( m \ot n ) = m\hm{-1} \cdot n \ot m\hm{0}	\\
&\phi\inv (n \ot m) = m\hm{0} \ot S\inv (m\hm{-1}) \cdot n
\end{align*}
for $M,N \in \yd$, $m \in M$ and $n\in N$.
\\
A $k$-Hopf algebra $H$ is said to be \emph{quasitriangular} if there exists an element $R = \sum R^1 \ot R^2 \in H \ot H$, called the \emph{$R$-matrix}\index{quasitriangular!$R$-matrix}, satisfying the following conditions
\begin{align}
&\sum \va(R^1) R^2 = \sum R^1 \va(R^2) = 1,	\label{qt1}\tag{QT1}\\
&\sum \Delta(R^1) \ot R^2 = \sum R^1 \ot r^1 \ot R^2r^2,	\label{qt2}\tag{QT2}\\
&\sum R^1 \ot \Delta(R^2) = \sum R^1r^1 \ot r^2 \ot R^2,	\label{qt3}\tag{QT3}\\
&\sum R^1 h_1 \ot R^2 h_2 = \sum h_2 R^1 \ot h_1 R^2 \label{qt4}\tag{QT4}
\end{align}
for all $h \in H$, where $r=R$. From now on, we will usually omit the sum sign and write $R = R^1 \ot R^2$.

Consider the category of $H$-modules $\HM$. It is monoidal via the diagonal action (as in 2.). Furthermore, $\HM$ is braided via:
\begin{align*}
&\psi(m\ot n) = R^2\cdot n \ot R^1\cdot m	\\
&\psi\inv(n \ot m) = S(R^1)\cdot m \ot R^2\cdot n
\end{align*}
for $M,N \in \HM$, $m\in M$ and $n \in N$.

For any $M \in \HM$, there are 2 ways to define a left-left Yetter-Drinfeld module structure on $M$, by using the $R$-matrix or its inverse:
\begin{equation}\label{deflambda1en2}
\begin{split}
&\lambda_1(m) = R^2 \ot R^1 \cdot m	\\
&\lambda_2(m) = SR^1 \ot R^2 \cdot m
\end{split}
\end{equation}
In this way we can consider 2 monoidal subcategories of $\yd$, say $\mr$ resp. $\mri$. As braided monoidal categories we get 
\begin{align*}
&(\mr,\psi) \hookrightarrow (\yd,\phi)	\\
&(\mri,\psi) \hookrightarrow (\yd,\phi\inv)
\end{align*}

Now let us recall the definition of the Brauer group of a strict closed monoidal category $\C$. An Algebra in $\C$ is called an Azuamaya algebra if the following two functors are tensor equivalence functors:
\def\Aol{\overline{A}}
$$\begin{diagram}
\C & \rTo& _{A\otimes\Aol}\C, & \ \ X  \mapsto  A\ot X\\
\C & \rTo& \C_{\Aol\otimes A}, &\  \ X  \mapsto  X\ot A
\end{diagram}$$
where $\overline{A}$ is the opposite algebra of $A$ in $\C$. For more detail, the reader is referred to \cite{vanoystaeyenzhangbraided}.

\subsection{Transmutation theory}

It is well-known that $H$ can be deformed into a braided Hopf algebra $\tr$ via Majid's transmutation process \cite{majidtransmutation}. In particular, $\tr$ equals $H$ as an algebra. It becomes an $H$-module algebra with action given by
\[
h \rhd x = h_1 x S(h_2)
\]
for $h,x\in H$. One can turn $\tr$ into a braided Hopf algebra in $\HM$ as follows: the counit is the same as $\va_H$, the comultiplication and antipode are given by
\begin{align*}
\ud (x) &= x_1 S(R^2) \ot R^1 \rhd x_2	\\
		&= x_1 S(r^2)S(R^2) \ot R^1 x_2 S(r^1)	\\
		&= x_1 r^2S(R^2) \ot R^1 x_2 r^1	\\
		&= r^2x_2 S(R^2) \ot R^1 r^1 x_1 	\\
		&= R^2 \rhd x_2 \ot R^1 x_1
\intertext{and}
\uS (x) &= R^2 S(R^1\rhd x)
\end{align*}
for $x\in H$.

As $\tr=H$ as algebra and since
\[
(h_1 \rhd x)\cdot (h_2\cdot m) = h\cdot(x\cdot m)
\]
for
$h,x\in H$ and $m\in M$, where $M\in \HM$, every $H$-module is naturally an $\tr$-module. Let $\mO$ be the class of $\tr$-modules obtained in this canonical way. Then $(\ud,\mO)$ is an opposite comultiplication in the sense of \cite{majidbraidedgroups}. Furthermore, $(\tr,\ud,\uR=1\ot1)$ is a quasitriangular in the category $\HM$ (see \cite[Definition 1.3]{majidtransmutation} or \cite[Section 4]{majidreconstruction}). As observed in \cite{ZZ}, $\tr$ is flat in $\HM$. Finally, the braided Hopf algebra $\tr$ is cocommutative cocentral, in the sense of \cite{schauenburgbr2}, the half-braiding is defined by
\[
\sigma_{\tr,M} : \tr \ot M \ra M \ot \tr,\ \sigma_{\tr,M} (x \ot m) = r^2R^1 \cdot m \ot r^1xR^2
\]
for $m \in M$, $M \in \HM$ and $x\in H$. Following \cite{schauenburgbr2}, we say a bicomodule $M$ in ${}^{\tr}(\HM)^{\tr}$ \emph{cocommutative}\index{comodule!bicomodule!cocommutative} if 
\begin{align*}
\chi^+=\sigma_{\tr,M}\circ\chi^- = (M \overset{\chi^-}\longrightarrow \tr \ot M \overset{\sigma_{\tr,M}}\longrightarrow M \ot \tr)
\end{align*}
\hide{We will use $\trbicom$ to denote the category of cocommutative $\tr$-bicomodules in $\HM$.} 


\section{Bi-Galois objects versus tensor equivalences}\label{secbrequiv}

In this section, we will investigate the relation between tensor equivalences and braided bi-Galois objects. We are inspired by the following result due to \mbox{Schauenburg}.
\begin{proposition}[{\cite[Corollary 5.7]{schauenburg2}}]\label{propschauenburg}
Let $k$ be a commutative ring and let $B$ and $L$ be $k$-flat Hopf algebras. The following are equivalent:
\begin{itemize}
\item[1.] ${}^B\M$ and ${}^L\M$ are equivalent as monoidal $k$-linear categories ($B$ and $L$ are then said to be \emph{monoidally co-Morita equivalent}),
\item[2.] there is faithfully flat $L$-$B$-bi-Galois extension of $k$.
\end{itemize}
\end{proposition}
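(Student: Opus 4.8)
# Proof Proposal for Proposition \ref{propschauenburg}

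The plan is to prove the equivalence by establishing both directions, with the bulk of the work — and the main obstacle — lying in $(1) \Rightarrow (2)$, i.e., reconstructing a faithfully flat bi-Galois object from an abstract monoidal equivalence $\alpha : {}^B\M \to {}^L\M$. The direction $(2) \Rightarrow (1)$ is comparatively soft: given a faithfully flat $L$-$B$-bi-Galois object $A$, one takes $\alpha = A \,\square_B - : {}^B\M \to {}^L\M$ and checks that it is a monoidal equivalence. The cotensor functor is monoidal because $A \,\square_B (M \otimes N) \cong (A \,\square_B M) \otimes (A \,\square_B N)$ via the $L$-colinear isomorphism built from the comultiplication of $A$ as a bicomodule algebra; faithful flatness of $A$ makes $A \,\square_B -$ exact and conservative, and the inverse is $A^{-1} \,\square_L -$ where $A^{-1}$ is the inverse bi-Galois object, so one gets an equivalence. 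All of this is essentially in \cite{schauenburgbr2} (in the braided setting) and \emph{a fortiori} here, where $\C = \M$ is the category of $k$-modules.

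For $(1) \Rightarrow (2)$, I would follow the classical Ulbrich/Schauenburg reconstruction. The key point is that $B$ itself, regarded as a left $B$-comodule via its comultiplication, is a distinguished object of ${}^B\M$ which carries a right $B$-module structure (by multiplication) making ${}^B\M$ into a right $B$-module category compatibly; concretely, $B$ is the unit-related object whose endomorphisms in the module-category sense encode the Hopf structure. First I would set $A := \alpha(B)$. Since $\alpha$ is monoidal, the algebra structure $\nabla_B : B \otimes B \to B$, viewed appropriately, transports to give $A$ the structure of an algebra in ${}^L\M$, i.e., $A$ is an $L$-comodule algebra. The subtle part is producing the \emph{right} $B$-comodule structure on $A$: here one uses that $B$ acts on every object of ${}^B\M$ on the right through the regular coaction (more precisely, ${}^B\M$ is canonically a right comodule category over $B$, or equivalently the functor $- \otimes B : {}^B\M \to {}^B\M$ together with the coalgebra structure maps give $B$ the role of a "$B$-colored" object), and $\alpha$ being merely monoidal transports this extra structure. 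One then verifies that $A$ becomes an $L$-$B$-bicomodule algebra, that $A^{coB} \cong k$ (using that $\alpha$ sends the unit $I = k \in {}^B\M$ — with trivial $B$-coaction — to the unit of ${}^L\M$, and that cotensoring detects coinvariants), and that the canonical map $\canbr : A \otimes A \to A \otimes B$ is an isomorphism. This last step is where monoidality is used most forcefully: $\canbr$ can be written as $\alpha$ applied to the analogous canonical isomorphism $B \otimes B \to B \otimes B$ inside ${}^B\M$ (the "Galois map of the trivial object"), twisted by the monoidal constraints, so it is automatically invertible.

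The remaining points are faithful flatness of $A$ over $k$ and the analogous left $L$-Galois property. Faithful flatness: since $\alpha$ is an equivalence it reflects isomorphisms, and one argues that $- \otimes_k A$, which up to the monoidal structure is intertwined with the functor $\alpha$, must then be exact and conservative; concretely, $A = \alpha(B)$ and $B$ is $k$-flat by hypothesis, and the monoidal constraint isomorphisms $\alpha(M) \otimes \alpha(N) \cong \alpha(M \otimes N)$ let one bootstrap from flatness/faithfulness of the \emph{functor} to flatness/faithful flatness of the \emph{object} $A$. The left $L$-Galois structure is obtained symmetrically, either by repeating the argument with the roles of $L$ and $B$ swapped (applying it to a quasi-inverse of $\alpha$) or by invoking that a faithfully flat right $B$-Galois object which is simultaneously an $L$-comodule algebra, with the two coactions commuting and $\alpha(B) = A$ cotensoring back to an equivalence, is automatically left $L$-Galois — this is the standard fact that for bi-Galois objects one Galois condition plus the bicomodule-algebra axioms forces the other.

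\textbf{Main obstacle.} I expect the genuine difficulty to be the bookkeeping around transporting the \emph{extra} right-$B$-comodule-category structure on ${}^B\M$ through a functor that is only assumed monoidal (not braided, not module): one must pin down exactly which diagrams on $B \in {}^B\M$ encode "the regular right coaction," check they are expressible purely in terms of the monoidal structure of ${}^B\M$ together with the coalgebra maps of $B$, and then verify that $\alpha$ preserves all of them coherently. Once $A = \alpha(B)$ is equipped with its $L$-$B$-bicomodule-algebra structure, the Galois and faithful-flatness conditions follow by applying $\alpha$ (and its quasi-inverse) to the trivial-object versions, so those steps are routine by comparison.
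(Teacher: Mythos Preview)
The paper does not give its own proof of this proposition: it is quoted verbatim from \cite[Corollary 5.7]{schauenburg2} as motivation, and the paper's actual contribution is the braided generalization developed over the rest of Section~2, culminating in Theorem~\ref{thmahbigal}. So strictly speaking there is no ``paper's own proof'' to compare against for this exact statement.

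That said, your strategy is the right one and closely parallels what the paper does in the braided setting, with one important difference in how the key step is organized. You correctly locate the obstacle in producing the right $B$-comodule structure on $A = \alpha(B)$, but your proposed mechanism---that monoidality alone transports a ``right comodule-category structure''---is too vague to work as stated. The paper's resolution (specialized to $\C = {}_k\M$) is cleaner: $k$-linearity of $\alpha$ forces $\alpha(X^t) \cong X^t$ for every trivially-coacted object $X$, i.e.\ $\alpha$ is \emph{trivializable on} ${}_k\M$, which by Lemma~\ref{lemmatriviscmod} is the same as $\alpha$ being a right ${}_k\M$-module functor with structure isomorphisms $\theta_{M,X} : \alpha(M \otimes X^t) \cong \alpha(M) \otimes X$. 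The right $B$-coaction on $\alpha(B)$ is then simply $\theta_{B,B} \circ \alpha(\Delta_B)$, where $\Delta_B$ is viewed as a left-$B$-colinear map $B \to B \otimes B^t$. This is the precise content behind your ``extra structure'' remark, and once it is in place the bicomodule-algebra axioms follow from naturality of $\theta$ and $\varphi$ (Proposition~\ref{propahisbicomodalg}).

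A second difference is in how the Galois property is verified. You propose to show directly that $\canbr$ is the image under $\alpha$ of the trivial Galois map for $B$ and hence invertible. The paper instead proves the natural isomorphism $\alpha(M) \cong \alpha(B) \,\square_B\, M$ for all $M$ (using the equalizer $M \to B \otimes M^t \rightrightarrows B \otimes B^t \otimes M^t$ and exactness of $\alpha$), applies the same to a quasi-inverse $\beta$ to get $\beta(L)$, and then invokes Schauenburg's criterion \cite[Proposition~3.4]{schauenburgbr2}: a flat bicomodule algebra with a cotensor inverse is automatically faithfully flat bi-Galois. This route avoids a direct element/diagram check of $\canbr$ and simultaneously delivers faithful flatness and the left $L$-Galois property, whereas your outline treats these as separate loose ends.
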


Let $(\C,\phi)$ be a braided (strict) monoidal category with equalizers. Suppose $B$ and $L$ are flat Hopf algebras in $\C$. The `if' statement of the aforementioned proposition has been generalized to the braided setting by Schauenburg in \cite{schauenburgbr1,schauenburgbr2}, namely, if $A$ is a faithfully flat $L$-$B$-bi-Galois object, then the cotensor functor $\alpha_A = A \ct_B -: \BC \rightarrow \LC$ is a tensor equivalence. In particular, for two $B$-comodules $M$ and $N$, there's an isomorphism
\[
\xi : (A \ct_B M) \ot (A \ct_B N) \rightarrow A \ct_B (M \ot N)
\]
which is induced by $\xi_0 = (\nabla_A \ot M \ot N) \circ (A \ot \phi_{A,M} \ot N) \circ (\iota \ot \iota)$. Graphically, $\xi$ satisfies
\begin{align*}
\gbeg{4}{7}
\got{1}{\aam}\gvac{1}\got{1}{\phantom{aa}\alpha_A(N)}\gnl
\gcl{1}\gvac{1}\gcl{1}\gnl
\glmpt\gnot{\xi}\gcmpb\grmpt\gnl
\gvac{1}\gcl{1}\gnl
\glmpb\gnot{\iota}\gcmpt\grmp\gnl
\gcl{1}\gcn{2}{1}{2}{2}\gnl
\gob{1}{A}\gob{2}{M\ot N}
\gend
=
\;
\gbeg{4}{6}
\got{2}{\aam}\gvac{0}\got{2}{\phantom{aa}\alpha_A(N)}\gnl
\gcl{1}\gvac{1}\gcl{1}\gnl
\gnotc{\iota}\glmptb\grmpb\gnotc{\iota}\glmptb\grmpb\gnl
\gcl{1}\gbr\gcl{1}\gnl
\gmu\gcl{1}\gcl{1}\gnl
\gob{2}{A}\gob{1}{M}\gob{1}{N}
\gend
\end{align*}

The category $\BC$ is naturally a right $C$-module category. Indeed, if $M \in \BC$ and $X \in \C$, we can define $M \ast X = M \ot X$, which is the tensor product in $\C$ with left $B$-coaction given by $\chi^-_M \ot X$.

Any object $X \in \C$ can be seen as an object in $\BC$ if we equip $X$ with the trivial left $B$-comodule structure $\eta_B \ot X$. We will denote this comodule by $X^t$, although sometimes we will just write $X$ if the situation will make clear that $X \in \C$ is equipped with the trivial comodule structure.

\begin{definition}[\cite{davydovnikshych}]\label{deftriv}
Let $\C$ and $\D$ be monoidal categories and suppose $\E$ is a monoidal subcategory of both $\C$ and $\D$. A monoidal quivalence $\alpha : \C \ra \D$ is said to be \emph{trivializable on $\E$}\index{tensor equivalence!trivializable} if the restriction $\alpha|_{\E}$ is isomorphic to $id_{\E}$ as monoidal functors.

We will denote by $\Aut(\C)$ respectively $\Aut(\C,\E)$ the group of isomorphism classes of monoidal autoequivalences of $\C$, respectively monoidal autoequivalences of $\C$ \mbox{trivializable} on $\E$. If $\C$ and $\E$ are braided, we denote by $\Aut^{br}(\C,\E)$ the group of isomorphism classes of braided monoidal autoequivalences of $\C$ trivializable on $\D$.
\end{definition}

\begin{lemma}\label{lemmatriviscmod}
Let $B$ and $L$ be flat Hopf algebras in $\C$ and suppose $\alpha : \BC \rightarrow \LC$ is a (strong) monoidal functor. Then $\alpha$ is trivializable on $\C$ if and only if $\alpha$ is a right $\C$-module functor.
\end{lemma}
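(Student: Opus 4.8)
The plan is to prove both implications by directly unwinding the definitions of ``trivializable on $\C$'' and ``right $\C$-module functor'' and observing that each amounts to the same piece of data. Recall that objects of $\C$ sit inside both $\BC$ and $\LC$ via the trivial comodule structure $X \mapsto X^t$, and that $\alpha$ restricted to the image of these trivial comodules lands (up to the monoidal structure) in the trivial comodules again — indeed, since $\alpha$ is monoidal and $I^t$ is the unit of $\BC$, we have $\alpha(I^t) \cong I^t$, and then $\alpha(X^t) = \alpha(X^t \ot I) $ together with the right $\C$-module structure $M \ast X = M \ot X$ carrying the coaction $\chi^-_M \ot X$ shows $\alpha(X^t) = \alpha(I^t)\ast X \cong X^t$ as $L$-comodules once we know $\alpha$ is $\C$-linear; conversely the triviality hypothesis gives $\alpha(X^t)\cong X^t$ directly.

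First I would prove ($\Leftarrow$): assume $(\alpha,\theta)$ is a right $\C$-module functor, so we are given natural isomorphisms $\theta_{M,X} : \alpha(M \ast X) \to \alpha(M) \ast X$ satisfying the module-functor pentagon and unit axioms. Restricting to $M = I^t$ and using that $\alpha(I^t)\cong I^t$ (as $\alpha$ is strong monoidal), the isomorphisms $\theta_{I^t,X} : \alpha(X^t) = \alpha(I^t \ast X) \to \alpha(I^t)\ast X \cong X^t$ assemble into a natural isomorphism $\alpha|_{\C} \cong \mathrm{id}_{\C}$. The module-functor coherence axioms for $\theta$ restricted to trivial comodules translate exactly into the statement that this natural isomorphism is monoidal (the pentagon axiom, with $M = I^t$, becomes the hexagon/compatibility of $\theta_{I^t,-}$ with the tensor structure of $\alpha$, i.e. with $\xi$). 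Hence $\alpha$ is trivializable on $\C$.

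For ($\Rightarrow$): assume $\alpha|_{\C} \cong \mathrm{id}_{\C}$ as monoidal functors, via a monoidal natural isomorphism $u_X : \alpha(X^t) \to X^t$. I would define, for $M \in \BC$ and $X \in \C$, the candidate module constraint
\[
\theta_{M,X} : \alpha(M \ast X) = \alpha(M \ot X^t) \xrightarrow{\ \xi^{-1}\ } \alpha(M)\ot \alpha(X^t) \xrightarrow{\ \alpha(M)\ot u_X\ } \alpha(M)\ot X = \alpha(M)\ast X,
\]
where $\xi$ is the tensor structure of $\alpha$ (using that $M\ast X$ is literally $M\ot X^t$ in $\BC$). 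One then checks that $\theta$ is natural in both variables, that the module-functor associativity pentagon follows from the monoidal-functor hexagon for $\xi$ together with the associativity of $\ast$ (which is just that of $\ot$), and that the unit axiom follows from $u_I = \mathrm{id}$ and the unit axiom for the monoidal functor $\alpha$. This makes $(\alpha,\theta)$ a right $\C$-module functor.

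The main obstacle — really a bookkeeping obstacle rather than a conceptual one — is verifying that the coherence axioms genuinely match up under the dictionary $M \ast X = M \ot X^t$: one must be careful that the left $B$-coaction on $M \ast X$ is $\chi^-_M \ot X$ (so that $X^t$ contributes nothing to the coaction, which is exactly why $\xi$ restricted to one trivial operand is compatible with $\ast$), and that the associativity constraints $m_{X,Y,M}$ of the module category are inherited from the associativity of $\ot$ in $\C$, hence are identities in the strict setting. Once these identifications are in place the two pentagon diagrams are literally the same diagram, and similarly for the unit triangles, so no further computation is needed.
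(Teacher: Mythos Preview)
Your proposal is correct and follows essentially the same approach as the paper: in each direction you write down the same natural isomorphism the paper does (namely $\alpha(X^t)\cong\alpha(I\ast X)\xrightarrow{\theta_{I,X}}\alpha(I)\ast X\cong X$ for $(\Leftarrow)$, and $\theta_{M,X}:=(\alpha(M)\ot u_X)\circ\varphi^{-1}_{M,X}$ for $(\Rightarrow)$). The paper's proof is in fact terser than yours---it records only these isomorphisms and omits the verification that the trivialization is monoidal and that $\theta$ satisfies the module-functor coherence axioms---so your added bookkeeping is a genuine improvement rather than a deviation.
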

\begin{proof}
Suppose $\alpha$ is a right $\C$-module functor. The unit object $I \in \C$ can also be seen as an object in $\BC$ (with trivial $B$-comodule structure). Then
\[
\alpha(X) \cong \alpha(I \ot X^t) \overset{\theta_{I,X}}{\cong} \alpha(I)\ot X \cong I \ot X \cong X
\]
for any $X \in \C$. Conversely, suppose $\alpha$ is trivializable on $\C$, then
\[
\alpha(M \ot X) \overset{\varphi\inv_{M,X}}\cong \am \ot \alpha(X) \cong \am \ot X
\]
for $M\in \BC$ and $X \in \C$.
\end{proof}

\begin{lemma}\label{lemmaactistriv}
Let $B$ and $L$ be flat Hopf algebras in $\C$. Suppose $A$ is a faithfully flat $L$-$B$-bi-Galois object. The tensor equivalence functor $A \ct_B - : \BC \ra \LC$ is trivializable on $\C$, or equivalently, $\alpha$ is a right $\C$-module functor.
\end{lemma}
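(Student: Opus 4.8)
The plan is to apply the reformulation from Lemma~\ref{lemmatriviscmod}: since $\alpha := \alpha_A = A\ct_B -$ is a (strong) monoidal functor, it is trivializable on $\C$ precisely when it carries the structure of a right $\C$-module functor, so it suffices to produce a natural isomorphism $\theta_{M,X}\colon \alpha(M\ast X)\to\alpha(M)\ast X$ (for $M\in\BC$, $X\in\C$) satisfying the two coherence diagrams of Definition~\ref{defmodcat}. The key observation is that $M\ast X = M\ot X$ with left $B$-coaction $\chi^-_M\ot X$ is exactly the monoidal product $M\ot X^t$ computed inside $\BC$, where $X^t$ carries the trivial $B$-comodule structure (the braiding contributions collapse because one coaction is $\eta_B\ot X$). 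Hence the tensor structure $\xi$ of $\alpha$ supplies a natural isomorphism $\xi_{M,X^t}\colon \alpha(M)\ot\alpha(X^t)\xrightarrow{\sim}\alpha(M\ast X)$ in $\LC$, and the whole problem is reduced to identifying $\alpha(X^t)$ with $X$ (with trivial $L$-coaction), naturally in $X$.

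So first I would construct a natural morphism $\kappa_X\colon X\to\alpha(X^t)=A\ct_B X^t$ and prove it is an isomorphism in $\LC$. Since $A$ is a $B$-comodule algebra, $\eta_A$ is a morphism of $B$-comodule algebras, so $\chi^+_A\circ\eta_A=\eta_A\ot\eta_B$; therefore $\eta_A\ot X\colon X\to A\ot X$ equalizes $\chi^+_A\ot X$ and $A\ot\eta_B\ot X$ and factors uniquely through $\alpha(X^t)$, giving $\kappa_X$. It is a morphism in $\LC$ because $\chi^-_A\circ\eta_A=\eta_L\ot\eta_A$ forces the trivial $L$-coaction on the image. One cannot conclude that $\kappa_X$ is invertible just from $A^{coB}=I$ (that would require $X$ flat); instead, since $A$ is faithfully flat, it is enough to show $A\ot\kappa_X$ is invertible. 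Tensoring the defining equalizer by the flat object $A$ presents $A\ot\alpha(X^t)$ as the equalizer of $A\ot\chi^+_A\ot X$ and $A\ot A\ot\eta_B\ot X$ on $A\ot A\ot X$; the point is that $A\xrightarrow{A\ot\eta_A}A\ot A$ is already a \emph{split} (hence absolute) equalizer of $\{A\ot\chi^+_A,\ A\ot A\ot\eta_B\}$, the retraction being $\nabla_A$ and the comparison morphism $A\ot A\ot B\to A\ot A$ being built from the morphism $\gamma$ of Lemma~\ref{lemmagammaproperties} (this is where the Galois condition enters). Being absolute, this equalizer is preserved by $-\ot X$, so $A\ot\alpha(X^t)\cong(A\ot A)^{coB}\ot X\cong A\ot A^{coB}\ot X = A\ot X$, and unwinding the maps identifies the composite with $A\ot\kappa_X$. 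Hence $\kappa_X$ is an isomorphism. (Alternatively one may quote the faithfully flat braided descent behind Schauenburg's equivalence, \cite{schauenburgbr1,schauenburgbr2}.)

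With $\kappa$ available, I would set $\theta_{M,X}:=(\alpha(M)\ot\kappa_X^{-1})\circ\xi_{M,X^t}^{-1}$; equivalently, $\theta_{M,X}$ is the inverse of the evident comparison morphism $(A\ct_B M)\ot X\to A\ct_B(M\ot X)$ induced by the inclusion $A\ct_B M\hookrightarrow A\ot M$. This is a natural isomorphism in $\LC$. It remains to check the two module-functor axioms: the unit axiom is immediate from $\alpha(I^t)=I$, $\kappa_I=\mathrm{id}_I$, and $\xi_{M,I^t}$ being the right unit constraint of the monoidal functor $\alpha$; the pentagon-type axiom follows, by naturality of $\xi$ and the associativity (hexagon) axiom satisfied by $\xi$, once one knows the \emph{monoidality} of $\kappa$, namely $\kappa_{X\ot Y}=\xi_{X^t,Y^t}\circ(\kappa_X\ot\kappa_Y)$, which is in turn forced by naturality of $\xi$ together with $\kappa_I=\mathrm{id}_I$. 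This last part is a routine diagram chase. I expect the only non-formal step to be showing that $\kappa_X$ is an isomorphism, i.e.\ the faithfully flat descent step: since $X$ is not assumed flat, one must genuinely exploit faithful flatness of $A$ together with the Galois condition (through $\gamma$ and Lemma~\ref{lemmagammaproperties}); everything downstream of that is bookkeeping with the monoidal-functor coherences of $\alpha$.
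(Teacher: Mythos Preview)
Your approach is essentially the same as the paper's: both construct the comparison morphism $\kappa_X=f\colon X\to A\ct_B X^t$ induced by $\eta_A\ot X$, then prove it is an isomorphism by tensoring with the faithfully flat $A$ and invoking the Galois condition via $\gamma$. The paper packages the second step slightly differently: rather than arguing a split equalizer for each $X$, it first proves the isomorphism $A\ot(A\ct_B M)\cong A\ot M$ for \emph{arbitrary} $M\in\BC$ by a diagram of equalizers with vertical isomorphism $\canbr\ot M$, records that this isomorphism is induced by $(A\ot\gamma\ot M)\circ(A\ot\chi^-_M)$, and then specializes to $M=X^t$ where, by the last identity in Lemma~\ref{lemmagammaproperties}, this collapses to $A\ot\eta_A\ot X = A\ot f$. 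Your split-equalizer argument is the same content viewed through a different lens. The paper then simply stops at ``$X\cong A\ct_B X$'' and implicitly invokes Lemma~\ref{lemmatriviscmod}; your additional care with the module-functor coherences is welcome but not strictly needed at the level of detail the paper adopts.

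One small correction: the monoidality of $\kappa$, i.e.\ $\kappa_{X\ot Y}=\xi_{X^t,Y^t}\circ(\kappa_X\ot\kappa_Y)$, is not ``forced by naturality of $\xi$ together with $\kappa_I=\mathrm{id}$''---that reasoning would apply to any natural transformation out of the unit and is too weak. It follows instead from the explicit formula $\xi_0=(\nabla_A\ot-\ot-)\circ(A\ot\phi\ot-)$ together with naturality of the braiding with respect to $\eta_A$ (so $\phi_{A,X}\circ(\eta_A\ot X)=X\ot\eta_A$) and the unit axiom $\nabla_A\circ(\eta_A\ot\eta_A)=\eta_A$. This is a one-line computation, so the gap is cosmetic, but the stated justification should be replaced.
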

\begin{proof}
For any $M \in \BC$ we have
\begin{align*}
\begin{diagram}[size=2em,nohug]
1	&	\rTo	&	A \ot (A \ct_B M)		& \rTo^{}	 		& A \ot (A \ot M) 			& \pile{\rTo^{}\\\rTo_{}}					& A \ot (A \ot B \ot M)  \\
	&			&	\dTo_{\sim}		& 			& \dTo_{=}						& 		& \dTo_{=} \\
1	&	\rTo	&	(A \ot A) \ct_B M		& \rTo^{}	 		& (A \ot A) \ot M 			& \pile{\rTo^{}\\\rTo_{}}					& (A \ot A) \ot B \ot M  \\
	&			&	\dTo_{\sim}		& 			& \dTo_{can_+ \ot M}						& 		& \dTo_{=} \\
1	&	\rTo	&	(A \ot B) \ct_B M		& \rTo^{}	 		& (A \ot B) \ot M 			& \pile{\rTo^{}\\\rTo_{}}					& (A \ot B) \ot B \ot M  \\
	&			&	\dTo_{\sim}		& 			& \dTo_{=}						& 		& \dTo_{=} \\
1	&	\rTo	&	A \ot (B \ct_B M)		& \rTo^{}	 		& A \ot (B \ot M) 			& \pile{\rTo^{}\\\rTo_{}}					& A \ot (B \ot B \ot M)  \\
	&			&	\dTo_{\sim}		& 			&	\\
	&			&	A \ot M
\end{diagram}
\end{align*}
The first and fourth sequence are exact because $A$ is flat. The associativity constraints are identities, as $\C$ is assumed to be strict. Hence $A \ot M \cong A \ot (A \ct_B M)$, where the isomorphism $A \ot M \ra A \ot (A \ct_B M)$ is induced by the morphism
\begin{align}\label{eqinduced}
\gbeg{4}{5}
\got{1}{A}\gvac{2}\got{1}{M}\gnl
\gcl{1}\gvac{1}\glcm\gnl
\gcl{1}\gnotc{\gamma}\glmpb\grmptb\gcl{1}\gnl
\gmu\gcl{1}\gcl{1}\gnl
\gob{2}{A}\gob{1}{A}\gob{1}{M}
\gend
\end{align}
Now let $X \in \C$ and consider $X^t$. The morphism $\eta_A \ot X: X \ra A \ot X$ induces a morphism, say $f : X \ra A \ct_B X$. Moreover if $X$ has trivial $B$-comodule structure \eqref{eqinduced} becomes
\begin{align*}
\gbeg{4}{5}
\got{1}{A}\gvac{2}\got{1}{X^t}\gnl
\gcl{1}\gvac{1}\glcm\gnl
\gcl{1}\gnotc{\gamma}\glmpb\grmptb\gcl{1}\gnl
\gmu\gcl{1}\gcl{1}\gnl
\gob{2}{A}\gob{1}{A}\gob{1}{X^t}
\gend
=
\gbeg{4}{5}
\got{1}{A}\gvac{2}\got{1}{X^t}\gnl
\gcl{1}\gvac{1}\gu{1}\gcl{1}\gnl
\gcl{1}\gnotc{\gamma}\glmpb\grmptb\gcl{1}\gnl
\gmu\gcl{1}\gcl{1}\gnl
\gob{2}{A}\gob{1}{A}\gob{1}{X^t}
\gend
\overset{\text{(Lemma \ref{lemmagammaproperties})}}
=
\gbeg{4}{4}
\got{1}{A}\gvac{2}\got{1}{X^t}\gnl
\gcl{1}\gu{1}\gu{1}\gcl{1}\gnl
\gmu\gcl{1}\gcl{1}\gnl
\gob{2}{A}\gob{1}{A}\gob{1}{X^t}
\gend
=
\gbeg{3}{4}
\got{1}{A}\gvac{1}\got{1}{X^t}\gnl
\gcl{1}\gu{1}\gcl{1}\gnl
\gcl{1}\gcl{1}\gcl{1}\gnl
\gob{1}{A}\gob{1}{A}\gob{1}{X^t}
\gend
\end{align*}
Hence the isomorphism $A \ot X \cong A \ot (A \ct_B X)$ coincides with $A \ot f$. By faithfully flatness of $A$, $f$ must be an isomorphism in $\C$. Thus $X \cong A \ct_B X$ (as $\C$-objects).
\end{proof}
Consider $\alpha_A = A \ct_B -$ as in the previous lemma. Let $\U : \LC \rightarrow \C$ be the forgetful functor and define $\omega_A = \U \circ \alpha_A : \LC \rightarrow \C$. Thus if $M \in \BC$, then $\aam=\oam$ as $\C$-objects, so if we want to emphasize the fact that we treat $\aam$ as a $\C$-object, we can (but not always will) use $\oam$.
\\
The tensor product of two $B$-comodules in $\C$ is again a $B$-comodule through the diagonal coaction. In particular, if $X^t \in \C$ and $M \in \BC$ arbitrary, then $X^t \ot M \in \BC$, then
\begin{align}
\chi^-_{X^t\ot M}
=
\gbeg{3}{4}
\got{1}{X^t}\gvac{1}\got{1}{M}\gnl
\gcl{1}\glcm\gnl
\gbr\gcl{1}\gnl
\gob{1}{B}\gob{1}{X^t}\gob{1}{M}\gnl
\gend\notag
\intertext{By the the naturality}
\label{eqbraidinghcolinear}
\gbeg{3}{4}
\got{1}{X^t}\gvac{1}\got{1}{M}\gnl
\gcl{1}\glcm\gnl
\gbr\gcl{1}\gnl
\gob{1}{B}\gob{1}{X^t}\gob{1}{M}\gnl
\gend
=
\gbeg{3}{5}
\gvac{1}\got{1}{X^t}\got{1}{M}\gnl
\gvac{1}\gbr\gnl
\glcm\gcl{1}\gnl
\gcl{1}\gibr\gnl
\gob{1}{B}\gob{1}{X^t}\gob{1}{M}\gnl
\gend
\end{align} 
which is saying that the braiding $\phi_{X^t,M}:X^t\ot M \ra M \ot X^t$ is a morphism in $\BC$. We can now make the following observation.
\begin{lemma}\label{lemmaalphaaa}
With notation as above, we have
\begin{align*}
\begin{diagram}
\omega_A(X^t) \ot \oam	&	\rTo^{\varphi_{X,M}}	&	\omega_A(X^t \ot M)	\\
\dTo^{\phi_{\omega_A(X^t),\oam}}			&					&	\dTo_{\omega_A(\phi_{X^t,M})}	\\
\oam \ot \omega_A(X^t)	&	\rTo^{\varphi_{M,X}}	&	\omega_A(M \ot X^t)	\\
\end{diagram}\tag{$\mathbb{A}$}
\end{align*}
for $M\in \BC$ and $X \in \C$.
\end{lemma}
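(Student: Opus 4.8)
The plan is to unwind both the monoidal structure $\varphi$ and the $\C$-module structure $\theta$ of $\omega_A$ in terms of the explicit map $\xi_0$ and the canonical/$\gamma$ morphisms, and then check the square $(\mathbb{A})$ by a graphical calculation on the level of $A$-comodules, i.e. after composing with the (mono) inclusions $\iota$ of the relevant cotensor products into tensor products in $\C$. Since $A$ is flat, these inclusions are equalizers, so it suffices to verify the identity of the two composites $\omega_A(X^t)\ot\oam\to\omega_A(M\ot X^t)$ after postcomposing with $\iota : \omega_A(M\ot X^t)=A\ct_B(M\ot X^t)\hookrightarrow A\ot M\ot X^t$; and both routes can then be expressed purely in terms of $\nabla_A$, $\phi$, $\chi^-_M$ and the inclusions $\iota$ of $\omega_A(X^t)$ and $\oam$, using the defining formula $\xi_0 = (\nabla_A\ot M\ot N)(A\ot\phi_{A,M}\ot N)(\iota\ot\iota)$ twice.

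First I would record, using Lemma \ref{lemmatriviscmod} and its proof, that the trivialization isomorphism $\omega_A(X^t)\cong X$ together with $\theta_{I,X}$ identifies $\varphi_{X,M} : \omega_A(X^t)\ot\oam \to \omega_A(X^t\ot M)$, after composing with $\iota$ into $A\ot X\ot M$, with the map built from $\eta_A$ on the $X$-slot: concretely $\iota\circ\varphi_{X,M} = (\nabla_A\ot X\ot M)(A\ot\phi_{A,X}\ot M)(\iota_{X^t}\ot\iota_M)$ where $\iota_{X^t} : \omega_A(X^t)\to A\ot X$ is induced by $\eta_A\ot X$, hence $\iota_{X^t}$ followed by $A\ot X$-identification is just $\eta_A\ot X$ and the braiding $\phi_{A,X}$ acts trivially on the unit. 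So the top-then-right composite $\omega_A(\phi_{X^t,M})\circ\varphi_{X,M}$, after $\iota$, becomes $(A\ot\phi_{X,M})$ applied to $\xi_0$-type data, which graphically is: coact on $M$ by $\chi^-_M$, multiply the resulting $B$ (pushed through a braiding) into $A$, and then swap the $X$ and $M$ legs by $\phi_{X,M}$ — but crucially the $B$-coaction on $M$ inside $X^t\ot M$ is the diagonal one, which by \eqref{eqbraidinghcolinear} equals $\chi^-_M$ with $X$ sliding past via the braiding. The bottom-then-right composite $\varphi_{M,X}\circ\phi_{\omega_A(X^t),\oam}$ first braids the two cotensor objects in $\C$ (so it braids the underlying $A\ot X$ past $A\ot M$, i.e. four strands) and then applies $\xi_0$ for $M\ot X^t$, where now the trivial comodule sits on the right.

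The key step, and the main obstacle, is the bookkeeping of where the braiding $\phi_{A,-}$ acts: in the top route one uses $\xi$ for the pair $(X^t, M)$, so the "active" half-braiding is $\phi_{A,X}$ on a trivial comodule and $\phi_{X,M}$ at the end; in the bottom route one braids $\omega_A(X^t)$ past $\oam$ first — which braids an $A$-slot past an $A$-slot and past $M$ — and then uses $\xi$ for $(M,X^t)$ with $\phi_{A,M}$ active. Reconciling these requires the hexagon axioms for $\phi$ in $\C$ together with the fact that $\chi^-_M$ is coassociative and that $\phi$ is natural, so that the $A$-multiplications and the $B$-coaction-then-multiply pattern can be rearranged; the triviality of the $B$-comodule structure on $X^t$ (its coaction is $\eta_B\ot X$) is what kills the would-be discrepancy, since it means no $B$-strand ever emerges from the $X$-leg, so the only $B$-strand comes from $M$ and it is multiplied into the single $A$ on the left regardless of the order. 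I would therefore carry it out as: (1) draw $\iota\circ(\text{top route})$ as a string diagram in $A\ot M\ot X$; (2) draw $\iota\circ(\text{bottom route})$; (3) use naturality of $\phi$ plus one hexagon to slide the final $\phi_{X,M}$ (resp. $\phi_{M,X}$) through the multiplication node and the coaction node; (4) observe both reduce to: $\chi^-_M$, then $\phi_{B,X}$ (trivial), then $\nabla_A$ against $\eta_A$ — i.e. the same diagram — and conclude by faithful flatness / the equalizer property of $\iota$. I expect step (3), the hexagon manipulation matching the two braiding patterns, to be where essentially all the content lies; everything else is formal.
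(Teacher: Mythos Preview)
Your overall strategy is right and is essentially what the paper does: reduce to a comparison of string diagrams in $\C$ by composing with the equalizer inclusion $\iota$ and by replacing $\iota_{X^t}$ with $\eta_A\ot X$ via the trivialization $f:X\to A\ct_B X^t$ from Lemma~\ref{lemmaactistriv}. The paper literally precomposes both legs of $(\mathbb{A})$ with $f\ot(A\ct_B M)$, writes each composite as a map $X\ot A\ot M\to A\ot M\ot X$ induced through the cotensor equalizer, and observes the two resulting string diagrams coincide by naturality of $\phi$.

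Where your write-up goes wrong is the description of the diagrams themselves. The monoidal constraint $\xi$ of $A\ct_B-$ is induced by
\[
\xi_0=(\nabla_A\ot M\ot N)\circ(A\ot\phi_{A,M}\ot N)\circ(\iota\ot\iota),
\]
which involves \emph{no} $B$-coaction whatsoever; there are no $B$-strands to track, neither from $M$ nor from $X^t$. So your claim that both routes ``reduce to: $\chi^-_M$, then $\phi_{B,X}$, then $\nabla_A$ against $\eta_A$'' is not what happens, and the argument you sketch about the trivial $B$-coaction on $X^t$ ``killing the discrepancy'' is aimed at a nonexistent obstruction. The genuine point is different and simpler: without the substitution $\iota_{X^t}\rightsquigarrow\eta_A\ot X$, the two routes multiply the two $A$-legs in opposite orders (top gives $\nabla_A$ on $A_1\ot A_2$, bottom on $A_2\ot A_1$ after the big braiding), and these are \emph{not} equal as maps $A\ot X\ot A\ot M\to A\ot M\ot X$. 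It is precisely the unit axiom $\nabla_A(\eta_A\ot A)=\nabla_A(A\ot\eta_A)=\mathrm{id}_A$ that collapses both multiplications once one $A$-leg is $\eta_A$, after which the two diagrams are pure braidings and agree by naturality of $\phi$. No hexagon manipulation and no $B$-coaction bookkeeping is needed beyond this.
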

\begin{proof}
Let $f : X^t \ra A \ct_B X^t$ be the isomorphism in $\C$ induced by $\eta_A \ot X$ as in Lemma \ref{lemmaactistriv}. The morphism $\omega_A(\phi_{M,X^t}) \circ \varphi_{X,M} \circ (f \ot (A \ct_B M)) : X^t \ot (A \ct_B M) \ra A \ct_B (M \ot X^t)$ is induced by
\begin{align*}
(A \ot \phi_{M,X^t}) \circ \xi_0 \circ (\eta_A \ot X)
&=
\gbeg{4}{5}
\gvac{1}\got{1}{X}\got{1}{A}\got{1}{M}\gnl
\gu{1}\gbr\gcl{1}\gnl
\gmu\gcl{1}\gcl{1}\gnl
\gcn{2}{1}{2}{2}\gbr\gnl
\gob{2}{A}\gob{1}{M}\gob{1}{X}
\gend
\intertext{while the morphism $\varphi_{M,X}\circ \phi_{\omega_A(X^t),\oam} \circ (f \ot (A \ct_B M))$ is induced by}
\xi_0 \circ \phi_{A\ot X^t,A\ot M} \circ (\eta_A \ot X)
&=
\gbeg{4}{7}
\gvac{1}\got{1}{X}\got{1}{A}\got{1}{M}\gnl
\gu{1}\gbr\gcl{1}\gnl
\gbr\gbr\gnl
\gcl{1}\gbr\gcl{1}\gnl
\gcl{1}\gbr\gcl{1}\gnl
\gmu\gcl{1}\gcl{1}\gnl
\gob{2}{A}\gob{1}{M}\gob{1}{X}
\gend
\end{align*}
As both diagrams are equal by the the naturality and since $f \ot (A \ct_B M)$ is an isomorphism, we obtain
$\omega_A(\phi_{X^t,M}) \circ \varphi_{X,M} = \varphi_{M,X}\circ \phi_{\omega_A(X^t),\oam}$.
\end{proof}
Thus a faithfully flat braided $L$-$B$-bi-Galois object $A$ induces a monoidal (right $\C$-linear) equivalence $A \ct_B - : \BC \ra \LC$ which is trivializable on $\C$ and satisfies $(\mathbb{A})$. Our next goal is to investigate whether the converse statement is valid. That is, suppose $\alpha : \BC \ra \LC$ is a tensor equivalence trivializable on $\C$ and satisfying $(\mathbb{A})$, does $\alpha$ come from a faithfully flat bi-Galois object? By Lemma \ref{lemmatriviscmod}, $\alpha$ is a right $\C$-module functor with
\begin{align}\label{defthetamx}
\theta_{M,X} = \big( \alpha(M \ot X) \overset{\varphi\inv_{M,X}}\cong \am \ot \alpha(X) \cong \am \ot X \big)
\end{align}
for $M\in \BC$ and $X \in \C$.
\\\\
Our approach is inspired by \cite{ulbrich1}, in which the author assigns to a fibre functor $\omega : {}^H\M \ra {}_k\M$, the right $H$-Galois object $\omega(H)$ (here $H$ is a $k$-Hopf algebra). 
\\
Let us  denote $\omega = \U \circ \alpha : \BC \ra \C$, where $\U : \LC \ra \C$ is the forgetful functor. We can use $\omega$ if we want to emphasize that we're working on the level of $\C$-objects. For example, we can say that $\alpha(B)$ is an algebra (in $\LC$), or equivalently, $\omega(B)$ is an $L$-comodule algebra in $\C$.
\\\\
Suppose $M$ is an algebra in $\BC$. It is known that a monoidal functor sends algebras to algebras.  Hence, $\am \in \LC$ is an algebra, or equivalently, $\om$ is a left $L$-comodule algebra in $\C$. As an algebra in $\C$, $\om$ has multiplication map
\begin{align}\label{defnablaam}
\nabla_{\om} = \big( \om \ot \om \overset{\varphi_{M,M}}\longrightarrow \omega(M\ot M) \overset{\omega(\nabla_M)}\longrightarrow \om \big)
\end{align}
and unit
\[
I \cong \omega(I) \overset{\omega(\eta_M)}\longrightarrow \om
\]

\hide{
To show that $\am \in \LC$ is an algebra, first observe that by the left comodule algebra property,
we have that $\nabla_M$ is $B$-colinear in $\C$, i.e. $\nabla_M \in \BC$. Thus $\alpha(\nabla_M)\in \LC$. Now consider the following diagram.
\begin{align*}
\begin{diagram}
\am \ot \am & \rTo^{\varphi_{M,M}} & \alpha(M \ot M) 			& \rTo^{\alpha(\nabla_M)}					& \am \\
\dTo_{} 	& {\text{\small (I)}}		& \dTo_{}	& 		{\text{\small (II)}}				& \dTo_{} \\
L \ot \am \ot \am & \rTo^{L \ot \varphi_{M,M}} & L \ot \alpha(M \ot M)		& \rTo^{L \ot \alpha(\nabla_M)}				& L \ot \am
\end{diagram}
\end{align*}
where the vertical maps are given by the respective comodule morphisms. Diagram (I) commutes as $\varphi_{M,M}$ is a morphism in $\LC$, while (II) commutes since $\alpha(\nabla_M)$ is a morphism in $\LC$, as observed above. By commutativity of the diagram, we obtain that $\om$ is a left $L$-comodule algebra.}

Suppose $F$ is another (flat) Hopf algebra in $\C$. Let $M$ be a $B$-$F$-bicomodule. By the bicomodule property, the comodule structure $\chi^+_M$ can be seen as a left $B$-colinear morphism $M \ra M \ot F^t$. We can now define a right $F$-comodule structure on $\om$ as follows
\[
\om \overset{\omega(\chi^+_M)}\longrightarrow \omega(M\ot F^t) \overset{\theta_{M,F^t}}\longrightarrow \om \ot F
\]
We will now prove that if $M$ is a $B$-$F$-bicomodule algebra, then $\om$ is a right $F$-comodule algebra, i.e.
\begin{align*}
\gbeg{3}{4}
\got{1}{\om}\gvac{1}\got{1}{\om}\gnl
\gwmu{3}\gnl
\gvac{1}\grcm\gnl
\gob{2}{\phantom{a}\om}\gob{1}{F}
\gend
\;
=
\;
\gbeg{4}{5}
\got{1}{\om}\gvac{1}\got{1}{\om}\gnl
\grcm\grcm\gnl
\gcl{1}\gbr\gcl{1}\gnl
\gmu\gmu\gnl
\gob{2}{\om}\gob{2}{F}
\gend
\end{align*}
that is, by definition of $\nabla_{\om}$ and $\chi^+_{\om}$, we want the outer diagram of the following diagram to commute
\begin{align}\label{diaomcomodalg}
\begin{diagram}[labelstyle=\scriptstyle]
\om\ot\om	&		\rTo^{\varphi_{M\ot M}}		&		\omega(M\ot M)		&		\rTo^{\omega(\nabla_M)}		&		\om		\\
\dTo_{\omega(\chi^+_M)\ot\omega(\chi^+_M)}	&	(I)				&		\dTo_{\omega(\chi^+_M\ot\chi^+_M)}	&	(II)		&				\\
\omega(M\ot F^t)\ot\omega(M\ot F^t)		&		\rTo^{\varphi_{M\ot F^t,M\ot F^t}}		&		\omega(M\ot F^t\ot M \ot F^t)		&				&		\dTo^{\omega(\chi^+_M)}		\\
\dTo_{\theta_{M,X}\ot\theta_{M,X}}	&					&		\dTo_{\omega(M \ot \phi_{F^t,M} \ot F)}	&					&				\\
\om\ot F \ot \om \ot F		&			(IV)		&		\omega(M\ot M\ot F^t\ot F^t)		&		\rTo^{\omega(\nabla_M\ot\nabla_F)}		&		\omega(M\ot F^t)		\\
\dTo_{\om\ot \phi_{F,\om} \ot F}	&					&		\dTo_{\theta_{M\ot M,F\ot F}}	&		(III)			&		\dTo^{\theta_{M,F}}		\\
\om\ot\om\ot F \ot F		&		\rTo^{\varphi_{M,M}\ot F\ot F}		&			\omega(M\ot M) \ot F \ot F		&		\rTo^{\omega(\nabla_M)\ot\nabla_F}		&		\om \ot F
\end{diagram}\notag
\ \\[1ex]
\end{align}
Now (I) commutes by the the naturality of $\varphi$, (II) commutes since $M$ is assumed to be a right $F$-comodule algebra in $\C$ 
and (III) commutes by the the naturality of $\theta$. So it suffices to show the commutativity of diagram (IV). Taking the definition of $\theta_{M,X}$ as in \eqref{defthetamx} into consideration, we can divide (IV) into smaller diagrams as follows
\hide{
\begin{align*}
\begin{diagram}
A 		&	&\rTo^{}		&	&		A		&		\rTo^{}		&&	A		\\
		&	&			&	\ruTo	&		&					&\ruTo^{}			\\
\dTo^{}	&	&A			&	&		\rTo^{} &			A	&&	\dTo^{}		\\
		&	\ruTo^{}&	&	& \ruTo^{}	\\
A		&	&\rTo^{}	&	A&	&	\rTo^{}	& &A	\\
\dTo^{}		&	& 	&	\dTo^{}&	&	 	& &\dTo^{}	\\
A		&	&\rTo^{}	&	A&	&	\rTo^{}	& &A
\end{diagram}
\end{align*} 
}
\begin{landscape}
\scriptsize
\begin{align*}
\begin{diagram}	
\overset{\displaystyle \omega(M\ot F^t)}{\phantom{aa}\ot\omega(M\ot F^t) }	&	&\rTo^{\varphi}		&	&		\omega(M\ot F^t\ot M \ot F^t)		&		\rTo^{\omega(F\ot\phi\ot F)}		&&	\omega(M\ot M\ot F^t\ot F^t)		\\
		&	(i)&			&	\ruTo^{\varphi \circ id \ot \varphi}	&	(ii)	&					&\ruTo^{\varphi \circ id \ot \varphi}		& 	\\
\dTo^{\varphi\inv \ot \varphi\inv}	&	&\om\ot\omega(F^t\ot M)\ot\omega(F^t)			&	&		\rTo^{id \ot \omega(\phi) \ot id} &			\om\ot\omega(M \ot F^t)\ot\omega(F^t)	&&	\dTo^{\varphi\inv}		\\
		&	\ruTo^{id \ot \varphi \ot id \phantom{aaa}}& (iv) &	& \ruTo^{id \ot \varphi \ot id} & (iii)	\\
\overset{\displaystyle \om\ot \omega(F^t)}{\phantom{aa}\ot \om \ot\omega(F^t) }			&	&\rTo^{id \ot \phi \ot id}	&	\om\ot \om\ot\omega(F^t) \ot\omega(F^t)&	&	\rTo^{\varphi\ot\varphi}	& &\omega(M\ot M) \ot \omega(F^t \ot F^t)	\\
\dTo^{\sim}		&	& 	&	\dTo^{\sim}&	&	 	& &\dTo^{\sim}	\\
\overset{\displaystyle \om\ot F}{\phantom{aa}\ot \om \ot F }	&	&\rTo^{id \ot \phi \ot id}	&	\om\ot\om\ot F \ot F&	&	\rTo^{\varphi\ot id \ot id}	& &\omega(M\ot M) \ot F \ot F
\end{diagram}
\end{align*}
\normalsize
\end{landscape}
(i) and (iii) commute since $\omega$ is monoidal while (ii) commutes since the braiding $\phi_{F^t,M}:F^t\ot M \ra M\ot F^t$ is a morphism in $\BC$ as observed in \eqref{eqbraidinghcolinear}. Diagram (iv) commutes since we assume that the functor $\alpha$ is satisfying diagram $(\mathbb{A})$. Finally, the bottom two diagrams commute by the the naturality. Thus, $\om$ is a right $F$-comodule algebra.
\begin{proposition}\label{propahisbicomodalg}
Let $\alpha : \BC \ra \LC$ be a tensor equivalence trivializable on $\C$ and satisfying (A) and  denote by $\omega$ the composite $\U \circ \alpha : \BC \ra \C$ as before. Let $M$ be a $B$-$F$-bicomodule algebra, then $\om$ is an $L$-$F$-bicomodule algebra in $\C$.
\\
In particular, $\omega(B)$ is a flat $L$-$B$-bicomodule algebra in $\C$.
\end{proposition}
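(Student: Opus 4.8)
The plan is to recognise that the real content has already been assembled in the discussion preceding the statement: $\om$ is a left $L$-comodule algebra because $\alpha$ is monoidal and $M$ is an algebra in $\BC$, and $\om$ is a right $F$-comodule algebra by the diagram chase that used hypothesis $(\mathbb{A})$. Hence for the first assertion only two things remain: that the left $L$-coaction $\chi^-_{\om}$ and the right $F$-coaction $\chi^+_{\om}$ on $\om$ are compatible, i.e.\ $(\chi^-_{\om}\ot F)\circ\chi^+_{\om}=(L\ot\chi^+_{\om})\circ\chi^-_{\om}$, and (for $M=B$) that $\omega(B)$ is flat.

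First I would observe that the bicomodule compatibility is nothing but the assertion that $\chi^+_{\om}:\om\to\om\ot F$ is a morphism of left $L$-comodules, where the source carries $\chi^-_{\om}$ and the target carries the diagonal coaction $\chi^-_{\om}\ot F$ coming from the trivial $L$-comodule $F^t$. Since $\chi^+_{\om}$ is by construction the composite $\theta_{M,F^t}\circ\omega(\chi^+_M)$, it suffices to check that both factors are $L$-colinear. For $\omega(\chi^+_M)$: as $M$ is a $B$-$F$-bicomodule, $\chi^+_M$ is a left $B$-colinear morphism $M\to M\ot F^t$, hence a morphism in $\BC$, so $\alpha(\chi^+_M)$ is a morphism in $\LC$ and $\omega(\chi^+_M)$ is $L$-colinear. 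For $\theta_{M,F^t}$: by its definition in \eqref{defthetamx} it factors through $\varphi\inv_{M,F^t}$, which is a morphism in $\LC$, followed by the trivialization isomorphism $\alpha(F^t)\cong F^t$, which by Definition \ref{deftriv} is an isomorphism in $\LC$ onto $F^t$ equipped with the trivial $L$-comodule structure; consequently $\theta_{M,F^t}$ is $L$-colinear when $\om\ot F$ is given the coaction $\chi^-_{\om}\ot F$. Composing, $\chi^+_{\om}$ is $L$-colinear in the required sense, which is exactly the $L$-$F$-bicomodule axiom. Together with the two comodule algebra structures already available, this shows $\om$ is an $L$-$F$-bicomodule algebra.

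For the final sentence I would apply the first part to $M=B$ with its regular $B$-$B$-bicomodule algebra structure (both coactions equal to $\Delta_B$), which immediately gives that $\omega(B)$ is an $L$-$B$-bicomodule algebra. Flatness of $\omega(B)$ in $\C$ I would obtain as follows: since $B$ is flat and the equalizer of trivial $B$-comodules carries again the trivial structure, the functor $\C\to\BC,\ X\mapsto B\ot X^t$ preserves equalizers; $\alpha$ preserves equalizers since it is an equivalence; and the forgetful functor $\LC\to\C$ preserves equalizers since $L$ is flat. Hence $X\mapsto\omega(B\ot X^t)$ preserves equalizers, and because $\alpha$ is right $\C$-linear this functor is naturally isomorphic, via $\theta_{B,X}$, to $X\mapsto\omega(B)\ot X$; therefore $\omega(B)$ is flat.

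The only genuinely delicate point is the $L$-colinearity of $\theta_{M,F^t}$: it hinges on unwinding Definition \ref{deftriv} and Lemma \ref{lemmatriviscmod} to see that the trivialization isomorphism intertwines the $L$-comodule structures correctly (that $\alpha(F^t)$ is identified with $F^t$ carrying its trivial $L$-coaction, so that the resulting coaction on $\om\ot F$ is precisely $\chi^-_{\om}\ot F$). Everything else is formal naturality together with the routine bookkeeping of diagonal coactions against trivial comodules.
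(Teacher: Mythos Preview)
Your proof is correct and follows essentially the same route as the paper: the bicomodule compatibility is verified by showing that both $\omega(\chi^+_M)$ and $\theta_{M,F^t}$ are morphisms in $\LC$ (the paper draws this as the commutativity of two squares (I) and (II) but the reasoning is identical), and the specialisation to $M=B$ is immediate. Your argument for the flatness of $\omega(B)$ is more explicit than the paper's single-line ``as $\alpha$ is an equivalence, it's immediate'', but it unpacks exactly what is implicit there: flatness follows from the natural isomorphism $\omega(B)\ot X\cong\omega(B\ot X^t)$ together with the fact that the composite functor $X\mapsto\omega(B\ot X^t)$ preserves equalizers.
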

\begin{proof}
$\om$ is already shown to be a left $L$-comodule algebra and a right $F$-comodule algebra, it remains to prove that $\om$ is an $L$-$F$-bicomodule. By definition of 
$\chi^+_{\am}$, we need to prove that the following diagram is commutative.
\begin{align*}
\begin{diagram}
\om 			& \rTo^{\omega(\chi^+_M)}	 		& \omega(M \ot F^t) 			& \rTo^{\theta_{M,F^t}}					& \om \ot F \\
\dTo_{\chi^-_{\om}} 		& {\text{\small (I)}}			& \dTo_{\chi^-_{\omega(M \ot F^t) }}						& {\text{\small (II)}}			& \dTo_{\chi^-_{\om}\ot F} \\
L \ot \om		& \rTo^{L \ot \omega(\chi^+_M)}	& L \ot \omega(M \ot F^t)		& \rTo^{L \ot \theta_{M,F^t}}				& L \ot \om \ot F
\end{diagram}
\end{align*}
As mentioned before, $\chi^+_M: M \rightarrow M \ot F^t$ is left $B$-colinear, thus $\omega(\chi^+_M) \in \LC$. So diagram (I) commutes. Furthermore, diagram (II) commutes as $\theta_{M,F^t}$ is a morphism in $\LC$.
\\
Since $B$ is naturally a $B$-bicomodule algebra via its comultiplication, $\omega(B)$ becomes an $L$-$B$-bicomodule algebra in $\C$. Finally, as $\alpha$ is an equivalence, it's immediate that $\omega(B)$ is a flat object in $\C$.
\end{proof}

Let $M \in \BC$. The comodule structure $\chi^-_M$ can be seen as a $B$-colinear morphism $M \ra B \ot M^t$. It is well-known that $M \cong B \ct_B M$ as $B$-comodules in $\C$. Hence
\begin{align*}
\begin{diagram}[size=2em,nohug]
1	&	\rTo	&	M	&	\rTo^{\phantom{a}\chi^-_M \phantom{a}} &	B \ot M^t	&	\pile{\rTo^{B \ot \chi^-_M} \\ \rTo_{\Delta_B \ot M}} & B \otimes B^t \ot M^t
\end{diagram}
\end{align*}
is exact in $\BC$. As $\alpha$ is exact ($\alpha$ being an equivalence), the sequence
\begin{align*}
\begin{diagram}[size=2em,nohug]
1 	&	\rTo	&	\am &	\rTo^{\alpha(\chi^-_M)}& \alpha(B \ot M^t) &	\pile{\rTo^{\alpha(B \ot \chi^-_M)} \\ \rTo_{\alpha(\Delta_B \ot M)}}	&	\alpha(B \otimes B^t \ot M^t)
\end{diagram}
\end{align*}
is exact in $\LC$. Since $\ab$ is an $L$-$B$-bicomodule and by definition of the cotensor product $\ab \ct_B M$, the sequence
\begin{align*}
\begin{diagram}[size=2em,nohug]
1	&	\rTo	& 	\ab \ct_B M	& \rTo^{}	& \ab \ot M		& \pile{\rTo^{\alpha(B) \ot \chi^-_M}\\\rTo_{\chi^+_{\ab} \ot M}}				& \ab \ot B \ot M
\end{diagram}
\end{align*}
is also exact in $\LC$. These two sequences in $\LC$ can be linked by $\theta$ as follows
\begin{align*}
\begin{diagram}
1	&	\rTo	&	\am		& \rTo^{\alpha(\chi^-_M) \phantom{aa}}	 		& \alpha(B \ot M^t) 			& \pile{\rTo^{\alpha(B \ot \chi^-_M)}\\\rTo_{\alpha(\Delta_B \ot M)}}					& \alpha(B \otimes B^t \ot M^t) \\
	&			&	\dDashto		& 			& \dTo^{\theta_{B,M}}						& 		& \dTo_{\theta_{B,B\ot M}} \\
1	&	\rTo	& 	\ab \ct_B M	& \rTo^{}	& \ab \ot M		& \pile{\rTo^{\alpha(B) \ot \chi^-_M}\\\rTo_{\chi^+_{\ab} \ot M}}				& \ab \ot B \ot M
\end{diagram}
\end{align*}
Indeed $\theta_{B,B\ot M} \circ \alpha(B \ot \chi^-_M) = (\alpha(B) \ot \chi^-_M)\circ\theta_{B,M}$ by the the naturality of $\theta$ and 
\begin{align*}
\begin{diagram}[nohug]
\alpha(B \ot M^t) 			& \rTo^{\alpha(\Delta_B \ot M)}			& \alpha(B \otimes B^t \ot M^t) \\
\dTo_{\theta_{B,M}}				&										&\dTo_{\theta_{B\ot B^t,M}} \\
\ab \ot M					& \rTo^{\alpha(\Delta_B) \ot M}			& \alpha(B \ot B^t) \ot M	\\
							& \rdTo_{\chi^+_{\ab} \ot M}			&\dTo_{\theta_{B,B} \ot M}	\\
							&										& \ab \ot B \ot M
\end{diagram}
\end{align*}
commutes by the naturality of $\theta$ and by definition of $\chi^+_{\ab}$. Hence
\[
\am \cong \ab \ct_B M
\]
is an isomorphism in $\LC$, say $G_M$, for any $B$-comodule $M$ in $\C$. The isomorphism $G : \alpha(-) \ra \ab \ct -$ is easily seen to be natural (since $\theta$ is).
\begin{remark}
For the sake of convenience, we will no longer make a distinction between $\alpha(M)$ and $\omega(M)$, as they are the same object in $\C$. If we say, for example, that $\alpha(M)$ is a right $F$-comodule algebra, it is understood that we mean that $\alpha(M)=\omega(M)\in \C$ is a right $F$-comodule algebra in $\C$.
\end{remark}
Next we'll show that, if $M$ is a $B$-$F$-bicomodule (algebra), then $\am \cong \ab \ct_B M$ is a left $L$-colinear and right $F$-colinear (algebra) isomorphism. To show that it is right $F$-colinear, the following diagram should commute.
\begin{align*}
\begin{diagram}
\am	&	\rTo^{G_M}	&	\ab \ct_B M	\\
\dTo^{\alpha(\chi^+_M)}	& (I)	&	\dTo_{\ab \ot \chi^+_M}	\\
\alpha(M\ot F^t)	&	\rTo^{G_{M\ot F^t}}	&	\ab \ct_B (M\ot F^t)	\\
\dTo^{\theta_{M,F}}	&	(II)	&	\dTo_{\sim}	\\
\am\ot F	&	\rTo^{G_M\ot F}	&	(\ab \ct_B M) \ot F
\end{diagram}
\end{align*}
The top diagram commutes as the isomorphism $G$ is natural. To show the commutativity of the bottom diagram, observe that
\begin{align*}
\begin{diagram}[size=2em,nohug]
1	&	\rTo	&	\alpha(M\ot F^t)		& \rTo^{}	 		& \alpha(B \ot M^t \ot F^t) 			& \pile{\rTo^{}\\\rTo_{}}					& \alpha(B \ot B \ot M^t \ot F^t)  \\
	&			&	\dTo_{G_{M\ot F^t}}		& 			& \dTo_{\theta_{B,M\ot F}}						& 		& \dTo_{\theta_{B,B\ot M\ot F}} \\
1	&	\rTo	&	\ab \ct_B (M\ot F^t)		& \rTo^{}	 		& \ab \ot M \ot F^t 			& \pile{\rTo^{}\\\rTo_{}}					& \ab \ot B \ot M \ot F^t \\
	&			&	\dTo_{\sim}		& 			& \dTo_{=}						& 		& \dTo_{=} \\
1	&	\rTo	&	(\ab \ct_B M) \ot F		& \rTo^{}	 		& \ab \ot M \ot F 			& \pile{\rTo^{}\\\rTo_{}}					& \ab \ot B \ot M \ot F \\
\end{diagram}
\intertext{where the last sequence is exact since $F$ is flat, while}
\begin{diagram}[size=2em,nohug]
1	&	\rTo	&	\alpha(M\ot F^t)		& \rTo^{}	 		& \alpha(B \ot M^t \ot F^t) 			& \pile{\rTo^{}\\\rTo_{}}					& \alpha(B \ot B \ot M^t \ot F^t)  \\
	&			&	\dTo_{\theta_{M,F}}		& 			& \dTo_{\theta_{B\ot M^t,F}}						& 		& \dTo_{\theta_{B\ot B^t\ot M^t,F}} \\
1	&	\rTo	&	\am\ot F		& \rTo^{}	 		& \alpha(B\ot M^t) \ot F 			& \pile{\rTo^{}\\\rTo_{}}					& \alpha(B \ot B^t \ot M^t)\ot F \\
	&			&	\dTo_{G_M\ot F}		& 			& \dTo_{\theta_{B,M}\ot F}						& 		& \dTo_{\theta_{B,B\ot M}\ot F} \\
1	&	\rTo	&	(\ab \ct_B M) \ot F		& \rTo^{}	 		& \ab \ot M \ot F 			& \pile{\rTo^{}\\\rTo_{}}					& \ab \ot B \ot B \ot M \\
\end{diagram}
\end{align*}
where we've again used the flatness of $F$. As $\theta_{B,M\ot F} = (\theta_{B,M}\ot F) \circ \theta_{B\ot M^t,F}$ and $\theta_{B,B\ot M\ot F} = (\theta_{B,B\ot M}\ot F)\circ \theta_{B\ot B^t\ot M^t,F}$, we obtain the commutativity of (I). Thus $\ab \cong \ab \ct_B M$ as $L$-$F$-bicomodules.
\\
To show that $G_M$ is an algebra morphism, we have to show $\nabla_{\ab \ct_B M} \circ (G_M \ot G_M) = G_M \circ \nabla_{\ab}$, or 
\[
\iota \circ \nabla_{\ab \ct_B M} \circ (G_M \ot G_M) = \iota \circ G_M \circ \nabla_{\ab}
\]
by the monicity of $\iota : \ab \ct_B M \ra \ab \ot M$. Observe
\begin{align*}
&\iota \circ G_M \circ \nabla_{\ab}		\\
\by{eqaeproduct}&=	\theta_{B,M} \circ \alpha(\chi^-_M) \circ \alpha(\nabla_M) \circ \varphi_{M,M}\\
 &=	\theta_{B,M}	\circ	\alpha(\nabla_B\ot\nabla_M) \circ \alpha(B\ot \phi_{M^t,B} \ot M^t) \circ \alpha(\chi^-_M\ot\chi^-_M)\circ \varphi_{M,M}	\\
&=\theta_{B,M}	\circ	\alpha(\nabla_B\ot\nabla_M) \circ \alpha(B\ot \phi_{M^t,B} \ot M^t) \circ \varphi_{B\ot M,B\ot M}\circ (\alpha(\chi^-_M)\ot\alpha(\chi^-_M))
\intertext{where the second equation follows from the left comodule algebra property and the last equality holds because of the the naturality of $\varphi$. On the other hand, we have}
&\iota \circ \nabla_{\ab \ct_B M} \circ (G_M \ot G_M)	\\
&=\nabla_{\ab \ot M} \circ (\iota\ot\iota) \circ (G_M \ot G_M)	\\
\mbox{by def. G} &=(\nabla_{\ab}\ot\nabla_M) \circ (\ab \ot \phi_{M,\ab} \ot M) \circ (\theta_{B,M}\ot\theta_{B,M}) \circ (\alpha(\chi^-_M) \ot\alpha(\chi^-_M))	\\
\by{defnablaam} &=(\alpha(\nabla_B)\ot\nabla_M) \circ (\varphi_{M,M}\ot B\ot B) \circ (\ab \ot \phi_{M,\ab} \ot M)\\
&\quad\quad\quad\quad\quad \circ (\theta_{B,M}\ot\theta_{B,M}) \circ (\alpha(\chi^-_M) \ot\alpha(\chi^-_M)).\\
\end{align*}
So we're done if we can show
\begin{align*}
&\theta_{B,M}	\circ	\alpha(\nabla_B\ot\nabla_M) \circ \alpha(B\ot \phi_{M^t,B} \ot M^t) \circ \varphi_{B\ot M,B\ot M} 
\\
&=(\alpha(\nabla_B)\ot\nabla_M) \circ (\varphi_{M,M}\ot B\ot B) \circ (\ab \ot \phi_{M,\ab} \ot M) \circ (\theta_{B,M}\ot\theta_{B,M}),
\end{align*}
which can be shown similar to proving that in \eqref{diaomcomodalg} diagrams (III) and (IV) are commutative. We arrive at the following proposition.
\begin{proposition}\label{propamcongahctm}
Let $\alpha : \BC \ra \LC$ be a tensor equivalence trivializable on $\C$ and satisfying (A). Let $M$ be a $B$-$F$-bicomodule algebra, then 
\[
\am \cong \ab \ct_B M
\]
as $L$-$F$-bicomodule algebras in $\C$.
\end{proposition}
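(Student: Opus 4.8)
The plan is to promote the natural isomorphism $G : \alpha(-) \to \ab \ct_B -$ in $\LC$, constructed just above by matching the two equalizer presentations of $\am$ and $\ab \ct_B M$ through the module-functor structure $\theta$, to an isomorphism of $L$-$F$-bicomodule algebras whenever $M$ is a $B$-$F$-bicomodule algebra. Since $G_M$ already lives in $\LC$, left $L$-colinearity is automatic, so two things remain: that $G_M$ is right $F$-colinear, and that $G_M$ is multiplicative. Both are diagram chases whose only non-formal ingredient is the hypothesis $(\mathbb{A})$.

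First I would establish right $F$-colinearity by verifying that the square relating $G_M$, $G_{M \ot F^t}$, $\alpha(\chi^+_M)$, $\ab \ct_B \chi^+_M$, $\theta_{M,F^t}$, and the canonical isomorphism $(\ab \ct_B M)\ot F \cong \ab \ct_B (M \ot F^t)$ commutes. Its top half commutes by naturality of $G$ (hence of $\theta$); for the bottom half I would present $\alpha(M \ot F^t)$ and $\ab \ct_B (M \ot F^t)$ as equalizers, use flatness of $F$ to pull $-\ot F$ through these equalizers, and then read off the claim from naturality of $\theta$ and the module-functor coherence identities $\theta_{B,M \ot F} = (\theta_{B,M}\ot F)\circ\theta_{B \ot M^t,F}$ and $\theta_{B,B \ot M \ot F} = (\theta_{B,B \ot M}\ot F)\circ\theta_{B \ot B^t \ot M^t,F}$. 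This part is purely formal.

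The substance is multiplicativity. Since $\iota : \ab \ct_B M \to \ab \ot M$ is monic, it suffices to prove $\iota \circ G_M \circ \nabla_{\ab} = \iota \circ \nabla_{\ab \ct_B M}\circ(G_M \ot G_M)$. I would expand the left-hand side using $\iota \circ G_M = \theta_{B,M}\circ\alpha(\chi^-_M)$, the fact that $\chi^-_M$ is an algebra map (the left comodule algebra property of the bicomodule algebra $M$), and naturality of $\varphi$; and the right-hand side using that $\nabla_{\ab \ct_B M}$ is induced from $\nabla_{\ab \ot M}$, together with the formula \eqref{defnablaam} for $\nabla_{\ab}$. Cancelling the common factor $\alpha(\chi^-_M)\ot\alpha(\chi^-_M)$ leaves the identity
\begin{align*}
&\theta_{B,M}\circ\alpha(\nabla_B \ot \nabla_M)\circ\alpha(B \ot \phi_{M^t,B}\ot M^t)\circ\varphi_{B \ot M, B \ot M}\\
&= (\alpha(\nabla_B)\ot\nabla_M)\circ(\varphi_{M,M}\ot B \ot B)\circ(\ab \ot \phi_{M,\ab}\ot M)\circ(\theta_{B,M}\ot\theta_{B,M}),
\end{align*}
which has exactly the shape of the commutativity of diagrams (III) and (IV) inside \eqref{diaomcomodalg}. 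Expanding $\theta$ through $\varphi\inv$ and the trivialization on $\C$ as in \eqref{defthetamx}, invoking monoidality of $\omega$ together with the fact (recorded in \eqref{eqbraidinghcolinear}) that $\phi_{M^t,B}$ is a morphism in $\BC$, this identity reduces to $(\mathbb{A})$. The main obstacle is exactly this last step: once both sides are fully expanded one has to recognize that the residual equality is of $(\mathbb{A})$-type and requires no coherence beyond $(\mathbb{A})$, naturality, and monoidality of $\omega$. Assembling the three verifications then shows $G_M$ is the desired $L$-$F$-bicomodule algebra isomorphism, and in particular is natural in $M$.
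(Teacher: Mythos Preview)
Your proposal is correct and follows essentially the same approach as the paper: the paper also uses that $G_M$ lives in $\LC$ for left $L$-colinearity, verifies right $F$-colinearity via the same two-square diagram (naturality of $G$ for the top and the equalizer/flatness argument with the $\theta$-coherence identities for the bottom), and reduces multiplicativity, after composing with the monic $\iota$, to precisely the identity you wrote, which the paper likewise observes is the same computation as the commutativity of diagrams (III) and (IV) in \eqref{diaomcomodalg}. The minor slip of writing $\nabla_{\ab}$ where $\nabla_{\am}$ is meant matches the paper's own notation and is harmless.
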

Now let $\beta : \LC \rightarrow \BC$ be an 'inverse' functor of the equivalence $\alpha$. We could repeat the same process with $\beta$. I.e. $\bL$ is a flat $B$-$L$-bicomodule algebra and
\begin{align*}
L \cong \alpha(\bL) \cong \ab \ct_B \bL
\end{align*}
as $L$-comodule algebras in $\C$. Similarly, we can show that
\[
B \cong \beta (\ab) \cong \bL \ct_L \ab
\]
as $B$-bicomodule algebras. The following proposition is due to Schauenburg.
\begin{proposition}[{\cite[Proposition 3.4]{schauenburgbr2}}]
Let $L, B$ be flat Hopf algebras in $\C$, and $A$ a flat $L$-$B$-bicomodule algebra. The following are equivalent:
\begin{itemize}
\item[1.] $A$ is a faithfully flat $L$-$B$-bi-Galois object,
\item[2.] there is a flat $B$-$L$-bicomodule algebra $A\inv$ such that $A \ct_B A\inv \cong L$ as $L$-bicomodule algebras and $A\inv \ct_L A \cong B$ as $B$-bicomodule algebras.
\end{itemize}
\end{proposition}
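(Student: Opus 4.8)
The proposition says exactly that a flat $L$-$B$-bicomodule algebra is a faithfully flat bi-Galois object precisely when it is invertible for the cotensor product, with the Hopf algebras $B$ and $L$ playing the role of local units; this is the braided incarnation of the groupoid structure on bi-Galois objects, and the plan is to treat the two implications separately, the first being immediate from the machinery already set up and the second carrying the structural content. For $(1)\Rightarrow(2)$: if $A$ is a faithfully flat $L$-$B$-bi-Galois object then, by the result of Schauenburg recalled after Proposition~\ref{propschauenburg}, $\alpha_A=A\ct_B-:\BC\to\LC$ is a tensor equivalence; by Lemma~\ref{lemmaactistriv} it is trivializable on $\C$ (equivalently right $\C$-linear) and by Lemma~\ref{lemmaalphaaa} it satisfies $(\mathbb{A})$. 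Fix a quasi-inverse $\beta:\LC\to\BC$. A quasi-inverse of a strong monoidal, right $\C$-linear equivalence is canonically strong monoidal and right $\C$-linear, and a short diagram chase shows it again satisfies $(\mathbb{A})$. Put $A\inv:=\beta(L)$. Applying Proposition~\ref{propahisbicomodalg} to $\beta$ (the roles of $B$, $L$, $F$ being played by $L$, $B$, $L$) shows $A\inv$ is a flat $B$-$L$-bicomodule algebra, and Proposition~\ref{propamcongahctm} for $\beta$ gives $\beta(M)\cong A\inv\ct_L M$ as bicomodule algebras, naturally in $M$. Then $A\ct_B A\inv=\alpha_A(\beta(L))\cong L$ as $L$-bicomodule algebras and, using the counit isomorphism $A=\alpha_A(B)=A\ct_B B$, $A\inv\ct_L A\cong\beta(\alpha_A(B))\cong B$ as $B$-bicomodule algebras, which is (2).

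For the converse $(2)\Rightarrow(1)$ — the substantive direction — the plan is to promote $\alpha_A:=A\ct_B-:\BC\to\LC$ to a strong monoidal, right $\C$-linear tensor equivalence and then to read the Galois data off its structure maps. First, $\alpha_A$ and $\alpha_{A\inv}:=A\inv\ct_L-:\LC\to\BC$ are mutually quasi-inverse as ordinary functors: associativity of the cotensor product — valid because $A$, $A\inv$ are flat objects and $B$, $L$ are flat, so tensoring commutes with the equalizers defining $\ct$ — gives $\alpha_A\alpha_{A\inv}(M)=A\ct_B(A\inv\ct_L M)\cong(A\ct_B A\inv)\ct_L M\cong L\ct_L M\cong M$ naturally, and symmetrically $\alpha_{A\inv}\alpha_A\cong\mathrm{id}$. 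Next, because $A$ is a bicomodule algebra the morphism $\xi_0=(\nabla_A\ot M\ot N)(A\ot\phi_{A,M}\ot N)(\iota\ot\iota)$ descends to the cotensor products and equips $\alpha_A$ with a monoidal constraint $\xi$; one verifies, using the bicomodule-algebra axioms and the algebra isomorphisms $A\ct_B A\inv\cong L$, $A\inv\ct_L A\cong B$, that $\xi$ is an isomorphism, so that $\alpha_A$ is a tensor equivalence, and the computation of Lemma~\ref{lemmaalphaaa} (which only uses that $A$ is a bicomodule algebra) shows it satisfies $(\mathbb{A})$. Monoidality now forces $A^{coB}=A\ct_B I^t=\alpha_A(I^t)\cong I$, whence $\alpha_A$ is trivializable on $\C$ and, by Lemma~\ref{lemmatriviscmod}, right $\C$-linear; symmetrically $A^{coL}\cong I$. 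Faithful flatness of $A$ in $\C$ then follows from right $\C$-linearity: if $A\ot f$ is an isomorphism in $\C$, then $\alpha_A(B\ot f^t)$ is, hence so is $B\ot f^t$ in $\BC$ (as $\alpha_A$ is an equivalence) and $B\ot f$ in $\C$, hence $f$ itself — any Hopf algebra $B$ being faithfully flat because $\eta_B$ is split by $\epsilon_B$, so that $(\epsilon_B\ot X)\circ(B\ot f)\inv\circ(\eta_B\ot Y)$ is a two-sided inverse of $f$. Finally, unravelling the constraint $\xi$ on $B\ot B$ with its diagonal $B$-coaction, identified along the usual isomorphism with $B\ot B^t$, identifies the relevant component of $\xi$ with the canonical morphism $\canbr:A\ot A\to A\ot B$, whence $\canbr$ is an isomorphism, and symmetrically $\canbl:A\ot A\to L\ot A$ is an isomorphism. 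Together with the bicomodule-algebra compatibility, which is part of the hypothesis of (2), this exhibits $A$ as a faithfully flat $L$-$B$-bi-Galois object.

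The main obstacle lies in this second implication, and more precisely in the interlocking structural verifications around the constraint $\xi$: showing that $\xi_0$ actually lands in $A\ct_B(M\ot N)$ rather than merely in the ambient $A\ot M\ot N$, that the resulting $\xi$ is coherent, and — the delicate point — that it is invertible and that, after the comparison of $B\ot B$ with $B\ot B^t$, it coincides with $\canbr$. These are Schauenburg's braided-categorical computations in \cite{schauenburgbr1,schauenburgbr2}, carried out with the graphical calculus; I expect this, rather than the associativity of $\ct$ or the soft faithful-flatness argument, to be where the real work sits. One should also check that every invocation of associativity and exactness of $\ct$ consumes only the stated flatness of $A$, $A\inv$, $B$, $L$, since $\ct$ is defined by an equalizer and interchanging two such equalizers with a tensor factor is exactly the step where flatness is used.
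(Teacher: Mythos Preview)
The paper does not prove this proposition: it is quoted verbatim from \cite[Proposition 3.4]{schauenburgbr2} and used as a black box. What the paper \emph{does} do, in the paragraphs immediately preceding the citation, is precisely your argument for $(1)\Rightarrow(2)$ in the more general setting of an abstract tensor equivalence $\alpha:\BC\to\LC$ (trivializable on $\C$ and satisfying $(\mathbb{A})$): one passes to a quasi-inverse $\beta$, sets $A\inv=\beta(L)$, and obtains $A\ct_B A\inv\cong L$ and $A\inv\ct_L A\cong B$ from Proposition~\ref{propamcongahctm}. So for that direction your proposal and the paper's surrounding development are essentially the same, and the paper then invokes only the implication $(2)\Rightarrow(1)$ from Schauenburg to conclude Theorem~\ref{thmahbigal}.

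Your treatment of $(2)\Rightarrow(1)$, however, has a genuine gap. You assert that the lax constraint $\xi$ on $\alpha_A=A\ct_B-$ is invertible ``using the bicomodule-algebra axioms and the algebra isomorphisms $A\ct_B A\inv\cong L$, $A\inv\ct_L A\cong B$'', but no mechanism is given, and the standard proof that $\xi$ is invertible (as in \cite{schauenburgbr1}) already \emph{uses} that $A$ is faithfully flat Galois---exactly what you are trying to deduce. Knowing that $\alpha_A$ and $\alpha_{A\inv}$ are quasi-inverse as plain functors does not by itself force the lax monoidal structure on $\alpha_A$ to be strong; one would need the unit/counit isomorphisms $\alpha_A\alpha_{A\inv}\cong\mathrm{id}$, $\alpha_{A\inv}\alpha_A\cong\mathrm{id}$ to be monoidal, and that again requires comparing $\xi$ for $A$ and for $A\inv$, which is the very computation at issue. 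The subsequent step ``monoidality now forces $A^{coB}=\alpha_A(I)\cong I$'' inherits the same circularity: the unit constraint $I\to\alpha_A(I)=A^{coB}$ is $\eta_A$, and its invertibility \emph{is} the statement $A^{coB}\cong I$, not a consequence of an independently established strong monoidality. Schauenburg's proof in \cite{schauenburgbr2} proceeds more directly, extracting the inverse of $\canbr$ (and faithful flatness) from the datum of $A\inv$ and the two cotensor isomorphisms without routing through the monoidal structure of $A\ct_B-$; if you want to salvage your route, the missing ingredient is an explicit argument that the natural isomorphism $(A\inv\ct_L A)\ct_B M\cong M$ intertwines the two lax structures, which is essentially equivalent in difficulty to the direct proof.
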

Therefore, we have proven the following theorem.
\begin{theorem}\label{thmahbigal}
Assume $\alpha : \BC \rightarrow \LC$ is a tensor equivalence trivilizable on $\C$ satisfying (A), or equivalently, a right $\C$-module functor satisfying (A). Then $\ab$ is a faithfully flat $L$-$B$-bi-Galois object.
\end{theorem}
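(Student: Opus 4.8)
The plan is to reduce the statement to Schauenburg's criterion quoted just above the theorem (the cited \cite[Proposition 3.4]{schauenburgbr2}): a flat $L$-$B$-bicomodule algebra $A$ in $\C$ is a faithfully flat $L$-$B$-bi-Galois object precisely when there is a flat $B$-$L$-bicomodule algebra $A\inv$ in $\C$ with $A \ct_B A\inv \cong L$ as $L$-bicomodule algebras and $A\inv \ct_L A \cong B$ as $B$-bicomodule algebras. So I would take $A = \ab$ and manufacture the inverse $A\inv$.

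First I would record that $A = \ab$ is already a flat $L$-$B$-bicomodule algebra: this is exactly the last assertion of Proposition \ref{propahisbicomodalg}, applied to the $B$-bicomodule algebra $B$ (with its comultiplication as coaction). Next I would fix a quasi-inverse $\beta : \LC \ra \BC$ of $\alpha$ and observe that $\beta$ inherits the running hypotheses: it is again a tensor equivalence, it is trivializable on $\C$ because $\beta|_\C \cong \beta\circ\alpha|_\C \cong (\beta\circ\alpha)|_\C \cong \mathrm{id}_\C$ using $\alpha|_\C \cong \mathrm{id}_\C$, and it satisfies the compatibility diagram $(\mathbb{A})$ — the latter being a formal consequence of $(\mathbb{A})$ for $\alpha$ together with the monoidal and module-functor coherences of $\beta$ and the fact, used in the proof of Lemma \ref{lemmaalphaaa}, that each braiding $\phi_{X^t,-}$ is a morphism of $B$- (resp. $L$-) comodules. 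Granting this, Propositions \ref{propahisbicomodalg} and \ref{propamcongahctm} hold verbatim with $\alpha$ replaced by $\beta$; in particular $A\inv := \bL$ is a flat $B$-$L$-bicomodule algebra.

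Then I would compute the two cotensor products. Applying Proposition \ref{propamcongahctm} to $\alpha$ with $M = \bL$ (a $B$-$L$-bicomodule algebra by the previous step) gives $\alpha(\bL) \cong \ab \ct_B \bL$ as $L$-$L$-bicomodule algebras, and $\alpha(\bL) \cong L$ as $L$-bicomodule algebras since $\beta$ is a quasi-inverse of $\alpha$; hence $A \ct_B A\inv \cong L$ as $L$-bicomodule algebras. Symmetrically, applying the $\beta$-version of Proposition \ref{propamcongahctm} with $M = \ab$ (an $L$-$B$-bicomodule algebra) gives $\beta(\ab) \cong \bL \ct_L \ab$ and $\beta(\ab) \cong B$, both as $B$-bicomodule algebras, so $A\inv \ct_L A \cong B$ as $B$-bicomodule algebras. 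Feeding $A = \ab$ and $A\inv = \bL$ into Schauenburg's criterion then yields that $\ab$ is a faithfully flat $L$-$B$-bi-Galois object, which is the claim.

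I expect the only genuine work to be the verification flagged above: that the quasi-inverse $\beta$ really does satisfy $(\mathbb{A})$ and is trivializable on $\C$, and that the coherence isomorphisms $\alpha\circ\beta \cong \mathrm{id}_{\LC}$ and $\beta\circ\alpha \cong \mathrm{id}_{\BC}$ respect the bicomodule-algebra structures closely enough that the resulting identifications $\ab\ct_B\bL\cong L$ and $\bL\ct_L\ab\cong B$ are bona fide isomorphisms of bicomodule algebras and not merely of underlying objects of $\C$. Once that bookkeeping is in place, the theorem is a direct assembly of Propositions \ref{propahisbicomodalg} and \ref{propamcongahctm} with the quoted result of Schauenburg.
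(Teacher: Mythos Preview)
Your proposal is correct and follows essentially the same route as the paper: build $A=\ab$ as a flat $L$-$B$-bicomodule algebra via Proposition~\ref{propahisbicomodalg}, run the same construction with a quasi-inverse $\beta$ to obtain $A\inv=\bL$, use Proposition~\ref{propamcongahctm} to identify $\ab\ct_B\bL\cong L$ and $\bL\ct_L\ab\cong B$, and then invoke Schauenburg's criterion. The paper is in fact terser than you are---it simply writes ``We could repeat the same process with $\beta$'' and ``Similarly, we can show that\ldots''---so your explicit flagging of the bookkeeping (that $\beta$ inherits trivializability and $(\mathbb{A})$, and that the identifications are bicomodule-algebra isomorphisms) is more careful than the paper's own exposition, not a deviation from it.
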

The process of assigning an equivalence $\alpha_A = A \ct_B -$ to an $L$-$B$-bi-Galois object $A$ and the process of obtaining a bi-Galois object $\ab$ from an equivalence $\alpha : \BC \ra \LC$ as described above, are obviously mutually inverse. Moreover, this correspondence is compatible with the multiplication of bi-Galois objects and the composition of functors. Indeed, let $B, L, F$ be flat Hopf algebras in $\C$ and suppose $\alpha : \BC \ra \LC$ and $\alpha' : \LC \ra \FC$ are tensor equivalences, trivializable on $\C$ and satisfying (A). Then 
\[
\alpha' (\ab) = \alpha'(L) \ct_L \ab
\]
as $F$-$B$-bicomodule algebras, by Proposition \ref{propamcongahctm}. Hence, we have a group isomorphism between the group of faithfully flat $B$-bi-Galois objects and the group of isomorphism classes of autoequivalences of $\BC$ trivializable on $\C$ and satisfying $(\mathbb{A})$. Let's denote the latter by $\autaf{\BC,\C}$.
\begin{proposition}\label{propbigalisauta}
Let $B$ be a flat Hopf algebra in $\C$, then
\[
\mathrm{BiGal}(B) \cong \Aut_{\scriptscriptstyle \mathbb{(A)}}(\BC,\C)
\]
\end{proposition}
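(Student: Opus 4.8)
The plan is to assemble the bijection and the group structure from the ingredients already developed. First I would define the two maps explicitly: in one direction, send a faithfully flat $B$-bi-Galois object $A$ to (the isomorphism class of) the cotensor functor $\alpha_A = A \ct_B - : \BC \ra \BC$; by the discussion preceding Lemma \ref{lemmaalphaaa} this is a tensor equivalence, by Lemma \ref{lemmaactistriv} it is trivializable on $\C$, and by Lemma \ref{lemmaalphaaa} it satisfies $(\mathbb{A})$, so it lands in $\autaf{\BC,\C}$. In the other direction, send an autoequivalence $\alpha \in \autaf{\BC,\C}$ to $\ab$ (where $\omega = \U \circ \alpha$); by Theorem \ref{thmahbigal} this is a faithfully flat $B$-bi-Galois object, so it lands in $\mathrm{BiGal}(B)$.

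Next I would check these two assignments are mutually inverse. For one composite: starting from $A$, form $\alpha_A$, then apply the construction of Theorem \ref{thmahbigal} to get $\alpha_A(B) = A \ct_B B \cong A$ as $L$-$B$-bicomodule algebras (here $L = B$), using the standard isomorphism $A \ct_B B \cong A$. For the other composite: starting from $\alpha$, form $\ab$, then the cotensor functor $\ab \ct_B -$; by Proposition \ref{propamcongahctm} (with $F = B$, applied to an arbitrary $B$-comodule $M$, or more precisely the natural isomorphism $G : \alpha(-) \ra \ab \ct_B -$ constructed just before Proposition \ref{propamcongahctm}) we have $\alpha(M) \cong \ab \ct_B M$ naturally, and one checks this natural isomorphism is monoidal, so $\alpha \cong \alpha_{\ab}$ as objects of $\autaf{\BC,\C}$. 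This is essentially the remark made right after Theorem \ref{thmahbigal} that the two processes are mutually inverse; I would just spell out that the natural isomorphism $G$ is compatible with the monoidal structures $\varphi$ and $\xi$, which follows from its construction via $\theta$.

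Finally I would verify that the bijection is a group homomorphism. The group law on $\mathrm{BiGal}(B)$ is the cotensor product $A' \ct_B A$ of bi-Galois objects, and the group law on $\autaf{\BC,\C}$ is composition of functors; I would show $\alpha_{A' \ct_B A} \cong \alpha_{A'} \circ \alpha_A$, which is the associativity-type isomorphism $(A' \ct_B A) \ct_B M \cong A' \ct_B (A \ct_B M)$ of cotensor products, again checking compatibility with the monoidal structures. Equivalently, in the reverse direction, one uses the identity $\alpha'(\ab) \cong \alpha'(L) \ct_L \ab$ as $F$-$B$-bicomodule algebras from Proposition \ref{propamcongahctm} noted just above, which says precisely that the bi-Galois object attached to $\alpha' \circ \alpha$ is the cotensor product of the ones attached to $\alpha'$ and $\alpha$. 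One also checks the identity functor corresponds to the trivial bi-Galois object $B$ itself, so the neutral elements match.

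The main obstacle is the coherence bookkeeping: verifying that the natural isomorphism $G$ (and the associativity isomorphism for cotensor products) is compatible with the monoidal constraints $\varphi$ on the functors, i.e. that $G$ is an isomorphism in the category of monoidal functors and not merely a natural isomorphism of the underlying functors. This amounts to a diagram chase of the same flavor as the proof that diagrams (III) and (IV) in \eqref{diaomcomodalg} commute, using that everything is built from $\theta$ and that $\theta$ satisfies the module-functor axioms together with $(\mathbb{A})$; no genuinely new idea is needed beyond what Theorem \ref{thmahbigal} already required, but it must be carried out carefully.
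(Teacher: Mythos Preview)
Your proposal is correct and follows essentially the same approach as the paper. The paper's argument is the paragraph immediately preceding the proposition: it asserts that the two assignments are ``obviously mutually inverse'' and then uses Proposition \ref{propamcongahctm} (applied to $M = \ab$) to obtain $\alpha'(\ab) \cong \alpha'(L) \ct_L \ab$, which is exactly your verification of the group law; you have simply spelled out the ``mutually inverse'' claim in more detail and flagged the monoidal-coherence check that the paper leaves implicit.
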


In the following section, we will apply this new result to the specific case where $\C$ is given by the category of left $H$-modules where $H$ is a finite dimensional quasitriangular Hopf algebra.


\section{The Brauer group of a finite quasitriangular Hopf algebra}

Let $(H,R)$ be a quasitriangular Hopf algebra over a field $k$. Dual to the construction in \cite{zhang}, there exists an exact sequence of groups
\[
1 \lra \Br(k) \lra \BM(k,H,R) \overset{\ppi}\lra \gqc{\tr}
\]
Here $Br(k)$ is the (classical) Brauer group of the field $k$, $BM(k,H,R) = Br(\HM)$ is the Brauer group of $H$-module algebras (or equivalently, the Brauer group of the braided monoidal category $\HM$) and $\gqc{\tr}$ is the subgroup of (isomorphism classes) of  quantum commutative $\tr$-bi-Galois objects. In this section, we will give a categorical interpretation to this sequence.

As in Section \ref{secprelim}, we will denote the braidings of the categories $\HM$ and $\yd$ by $\psi$ and $\phi$, respectively.

\subsection{A new characterization for $\mathbf{\gqc{\tr}}$}

\begin{lemma}\label{lemmaydistrcom}
Any left-left Yetter-Drinfeld module $M$ has the structure of a cocommutative $\tr$-bicomodule in the category $\HM$. Conversely, any cocommutative $\tr$-bicomodule is a left-left Yetter-Drinfeld module.

We obtain an equivalence of braided monoidal categories $\yd \ra \trcom$.
\end{lemma}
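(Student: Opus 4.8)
The plan is to construct the equivalence explicitly at the level of structure maps, verify it on both sides, and then check compatibility with the braidings.

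\textbf{Step 1: From Yetter-Drinfeld modules to cocommutative $\tr$-bicomodules.} Given $M \in \yd$, with $H$-action $\cdot$ and $H$-coaction $m \mapsto \Hm{m}$, I would define a right $\tr$-comodule structure on $M$ by $\chi^+(m) = m\hm{0} \ot m\hm{-1}$ (using the $R$-matrix only implicitly, via the identification $\tr = H$ as algebras), and a left $\tr$-comodule structure by $\chi^-(m) = S\inv(m\hm{-1}) \ot m\hm{0}$ — or rather, the structure forced by cocommutativity via $\chi^+ = \sigma_{\tr,M} \circ \chi^-$. The first thing to check is that these are $H$-module maps (i.e.\ morphisms in $\HM$), which uses the Yetter-Drinfeld compatibility condition and the formula $h \rhd x = h_1 x S(h_2)$ for the $\tr$-action together with \eqref{qt4}. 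Next I verify the coassociativity and counit axioms for $\chi^+$ as a $\tr$-comodule; this is where the comultiplication $\ud(x) = x_1 S(R^2) \ot R^1 \rhd x_2$ enters, and one uses the Yetter-Drinfeld axiom expressing how the coaction interacts with itself. Then I check the bicomodule compatibility and finally cocommutativity $\chi^+ = \sigma_{\tr,M} \circ \chi^-$, which should hold essentially by how $\chi^-$ was defined from $\chi^+$ and the explicit form of $\sigma_{\tr,M}(x \ot m) = r^2 R^1 \cdot m \ot r^1 x R^2$.

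\textbf{Step 2: From cocommutative $\tr$-bicomodules back to Yetter-Drinfeld modules.} Given a cocommutative $\tr$-bicomodule $M$ in $\HM$, it is already an $H$-module; I define the $H$-coaction by $m \mapsto m\hm{-1} \ot m\hm{0} := $ (the left $\tr$-comodule structure $\chi^-$, viewed as valued in $H \ot M$ since $\tr = H$ as objects, possibly twisted by the antipode to match conventions). One must check the Yetter-Drinfeld compatibility axiom. The key point is that cocommutativity $\chi^+ = \sigma_{\tr,M}\circ\chi^-$ together with the fact that $\chi^-$ is an $H$-module map in $\HM$ (where the $H$-action on $\tr$ is the adjoint action) encodes exactly the Yetter-Drinfeld condition. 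These two assignments are mutually inverse essentially by construction, giving an equivalence of categories.

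\textbf{Step 3: Braided monoidal compatibility.} I would first note that the monoidal structures match: the tensor product of Yetter-Drinfeld modules uses the diagonal action and coaction, and correspondingly the tensor product of $\tr$-bicomodules in $\HM$ uses the $\tr$-comodule structures, which should correspond under Step 1. Then I verify that the braiding $\phi(m \ot n) = m\hm{-1}\cdot n \ot m\hm{0}$ of $\yd$ is sent to the braiding of $\trcom$ — i.e.\ the one built from the comodule structures and the ambient braiding $\psi$ of $\HM$. This is a direct computation comparing the two explicit formulas, using $\uR = 1 \ot 1$ for the quasitriangular structure of $\tr$ in $\HM$ and the formula for $\psi$.

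\textbf{Main obstacle.} The genuinely delicate point is bookkeeping the antipodes and $R$-matrices correctly so that the right $\tr$-comodule structure really is coassociative with respect to $\ud$ and not with respect to some opposite or twisted comultiplication — the transmuted coproduct $\ud$ is ``opposite'' relative to the class $\mO$, so one must be careful which side the coaction lives on and whether $S$ or $S\inv$ appears. Getting the two one-sided comodule structures to be genuinely compatible as a \emph{bicomodule} (as opposed to merely each being a comodule) while simultaneously being cocommutative is the crux; everything else is a routine, if lengthy, graphical-calculus verification using Lemma~\ref{lemmagammaproperties}-style manipulations and the quasitriangularity axioms \eqref{qt1}--\eqref{qt4}.
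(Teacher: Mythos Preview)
Your overall architecture (build explicit comodule structures, invert them, check the braiding) is the same as the paper's, but the concrete formulas you write down in Step~1 are wrong, and this is not merely a bookkeeping nuisance you can fix later. The proposed right coaction $\chi^+(m)=m_{(0)}\ot m_{(-1)}$ is a comodule for the \emph{untransmuted} comultiplication $\Delta$, not for $\ud$: applying $(M\ot\ud)\chi^+$ produces $m_{(0)}\ot m_{(-2)}S(R^2)\ot R^1\rhd m_{(-1)}$, which does not equal $(\chi^+\ot\tr)\chi^+(m)=m_{(0)}\ot m_{(-1)}\ot m_{(-2)}$ unless $R$ is trivial. The $R$-matrix must appear explicitly in the coactions because it appears in $\ud$. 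The paper's correct formulas are
\[
\chi^-(m)=m_{(-1)}S(R^2)\ot R^1\cdot m_{(0)},\qquad
\chi^+(m)=R^2\cdot m_{(0)}\ot R^1 m_{(-1)},
\]
and conversely $\lambda(n)=n^{[-1]}R^2\ot R^1\cdot n^{[0]}$ (equivalently $S(R^1)n^{[1]}\ot R^2\cdot n^{[0]}$ from the right side). Your $\chi^-(m)=S^{-1}(m_{(-1)})\ot m_{(0)}$ has the same defect, and your Step~2 inversion ``viewed as valued in $H\ot M$, possibly twisted by the antipode'' is too vague to recover the needed $R$-twist.

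Two smaller points: the cocommutativity check $\sigma_{\tr,M}\circ\chi^-=\chi^+$ is a short direct computation once the correct $\chi^\pm$ are in hand (the paper carries it out in two lines), not something ``forced by definition''; and Lemma~\ref{lemmagammaproperties} is about the translation map of a Galois object and plays no role here --- the relevant identities are just \eqref{qt1}--\eqref{qt4} and the Yetter--Drinfeld compatibility.
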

\begin{proof}[Sketch of proof.]
Given $(M,\cdot,\lambda)\in\yd$, then $M \in {}^{\tr}(\HM)^{\tr}$ via
\begin{align*}
&\chi^- (m) \overset{not.}{=} \Blm{m} = m\hm{-1} S(R^2) \ot R^1 \cdot m\hm{0}	\\
&\chi^+ (m) \overset{not.}{=} \Brm{m} = R^2 \cdot m\hm{0} \ot R^1m\hm{-1}
\intertext{for $m \in M$. It is easy to see}
&\sigma_{\tr,M}\circ\chi^-(m) = \sigma_{\tr,M} ( m\hm{-1} S(P^2) \ot P^1 \cdot m\hm{0})	\\
&\phantom{\sigma_{\tr,M}\circ\chi^-(m) }= r^2R^1 P^1 \cdot m\hm{0} \ot r^1m\hm{-1} S(P^2)R^2	\\
&\phantom{\sigma_{\tr,M}\circ\chi^-(m) }= r^2 \cdot m\hm{0} \ot r^1m\hm{-1} = \chi^+(m)
\intertext{Conversely, given a cocommutative $\tr$-bicomodule $(N,\cdot,\chi^-,\chi^+)$, then $N$ becomes a left-left Yetter-Drinfeld module via}
&\lambda(n)  = n\blm{-1} R^2 \ot R^1\cdot n\blm{0}
\intertext{or}
&\lambda(n)= SR^1 n\brm{1} \ot R^2\cdot n\brm{0}
\end{align*}
for $n\in N$, using the cocommutativity. For a complete proof we refer to \cite[Section 2]{ZZ}.
\\
Finally, we can transfer the braiding of $\yd$ to $\trcom$ such that the equivalence becomes one of braided monoidal categories. That is, if $M, N \in \trcom$. The braiding, again denoted by $\phi$, is then defined by
\[
\phi_{M,N}(m \ot n) = m\blm{-1}R^2 \cdot n \ot R^1 \cdot m\blm{0}
\]
for $m\in M$ and $n \in N$.
\end{proof}

In particular, we see that $X \in {}^{\tr}(\HM)$ has a trivial $\tr$-comodule structure if and only if as a $\yd$-module $X$ is obtained using $\lambda_1$ (i.e. $X \in \mr \subset \yd$). In this case $\phi_{X,-} = \psi_{X,-}$. Furthermore, any (braided) monoidal autoequivalence $\alpha : \yd \ra \yd$ can be seen as a (braided) tensor equivalence $\alpha : \trcom \ra \trcom$ and conversely. 

Following \cite{ZZ}, we will call a braided bi-Galois object $A$ \emph{quantum commutative}\index{Galois object!bi-Galois object!quantum commutative} if $A$ is a cocommutative bi-Galois object which is commutative as an algebra in the category of left-left Yetter Drinfeld modules, that is
\begin{align}\label{eqqc}
ab = (a\hm{-1}\cdot b)a\hm{0}
\end{align}
for all $a, b \in A$. We will denote the group of quantum commutative $\tr$-bi-Galois objects by $\gqc{\tr}$. Clearly, $\gqc{\tr}$ is a subgroup of $\bg{\tr}$. By Proposition \ref{propbigalisauta}, we already know
\begin{align}\label{eqbigalisautaf}
\mathrm{BiGal}(\tr) \cong \autaf{{}^{\tr}(\HM)}{\HM} = \autaf{\yd}{\HM}
\end{align}

The following is due to Zhang and Zhu.

\begin{proposition}[{\cite[Theorem 3.6]{ZZ}}]\label{propqcisbraided}
Let $(H,R)$ be a finite dimensional quasi-triangular Hopf algebra. Suppose $A$ is an $\tr$-bi-Galois object. $A$ is quantum commutative if and only if the functor $A \ct_{\tr} -$ is a braided (monoidal) autoequivalence of the category ${}^{\tr}(\HM)$ (or $\yd$ by Lemma \ref{lemmaydistrcom}).
\end{proposition}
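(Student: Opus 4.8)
The plan is to apply the machinery of Section~\ref{secbrequiv} with $\C=\HM$ and $B=L=\tr$ (a flat Hopf algebra in $\C$). By Schauenburg's theorem together with Lemmas~\ref{lemmaactistriv} and~\ref{lemmaalphaaa}, the cotensor functor $\alpha_A=A\ct_{\tr}-:{}^{\tr}\C\to{}^{\tr}\C$ attached to an $\tr$-bi-Galois object $A$ is a tensor equivalence that is trivializable on $\C$ and satisfies diagram $(\mathbb{A})$, and via the braided equivalence ${}^{\tr}\C\cong\yd$ of Lemma~\ref{lemmaydistrcom} it becomes a tensor autoequivalence of $\yd$. The whole content of the proposition is thus to decide when $\alpha_A$ respects the braiding $\phi$. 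Throughout I identify $\alpha_A(\tr)=A\ct_{\tr}\tr$ with $A$ (as a left $\tr$-comodule algebra, and as a right $\tr$-comodule via the bi-Galois coaction), and I use the fact that the monoidal constraint $\xi$ of $\alpha_A$ is the one induced by $\xi_0$ recalled in Section~\ref{secbrequiv}.

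The first step is to reduce compatibility with the braiding to the single object $\tr$. Every $M\in\yd={}^{\tr}\C$ sits, via its coaction $\chi^-_M$, as a $\tr$-colinear (hence $\yd$-) sub-object of the cofree comodule $\tr\ot M^t$ with $M^t\in\mr$, and $\chi^-_M$ is split mono in $\C$ by the counit. Since $\alpha_A$ is exact and $\xi,\phi$ are natural, the braiding square for $(M,N)$ is a retract of that for $(\tr\ot M^t,\tr\ot N^t)$, and the latter decomposes, using monoidality of $\alpha_A$, into the squares for the `atomic' pairs $(\tr,\tr)$, $(\tr,N^t)$, $(M^t,\tr)$ and $(M^t,N^t)$. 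The last three commute automatically: when exactly one operand is trivial this is precisely diagram $(\mathbb{A})$ in the form of Lemma~\ref{lemmaalphaaa}, and when both are trivial it follows from $\alpha_A$ being trivializable on $\C$, since then $\alpha_A|_{\mr}\cong\mathrm{id}$ as monoidal functors and $\phi|_{\mr}=\psi$. Hence $\alpha_A$ is braided if and only if the single identity
\[
\alpha_A(\phi_{\tr,\tr})\circ\xi_{\tr,\tr}\;=\;\xi_{\tr,\tr}\circ\phi_{A,A}
\]
holds.

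The second step is to unwind this identity. After transporting along ${}^{\tr}\C\cong\yd$, the braiding $\phi_{A,A}$ on $A\ot A=\alpha_A(\tr)\ot\alpha_A(\tr)$ is the Yetter-Drinfeld braiding $a\ot b\mapsto a_{(-1)}\cdot b\ot a_{(0)}$ of $A$; this presupposes that $A$ is a cocommutative $\tr$-bicomodule so that $A\in\yd$, and indeed this cocommutativity $\chi^+_A=\sigma_{\tr,A}\circ\chi^-_A$ is exactly what makes $\alpha_A(\phi_{\tr,\tr})$ meaningful after transport, so it is part of the equivalence. Substituting $\xi_0$, the transmutation formulae for $\ud$ and $\sigma_{\tr,-}$, and the Yetter-Drinfeld structure of Lemma~\ref{lemmaydistrcom}, and composing with $\iota$ and with $A\ot\epsilon_{\tr}$ so as to undo the identifications $A\ct_{\tr}\tr\cong A$ and $A\ct_{\tr}(\tr\ot\tr)\cong A\ot\tr$, the displayed identity collapses precisely to $ab=(a_{(-1)}\cdot b)a_{(0)}$, that is, to commutativity of $A$ as an algebra in $\yd$. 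Thus, granted $A$ is a cocommutative bicomodule, $\alpha_A$ is braided iff $A$ is quantum commutative; conversely, if $\alpha_A$ is braided then evaluating at $\tr$ forces both the cocommutativity of $A$ and the commutativity relation, so $A$ is quantum commutative. For the `if' direction one may alternatively argue directly, checking the braiding square for arbitrary $M,N$ by substituting the explicit formulae for $\xi$ and $\phi$ and using $ab=(a_{(-1)}\cdot b)a_{(0)}$ together with $\chi^+_A=\sigma_{\tr,A}\circ\chi^-_A$.

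The main obstacle is bookkeeping rather than any single deep idea. One must carefully transport the cotensor-product constraint $\xi$ and the two $\tr$-comodule structures of $A$ through the chain of identifications $A\ct_{\tr}\tr\cong A$, $A\ct_{\tr}(\tr\ot\tr)\cong A\ot\tr$ (this last invoking the untwisting isomorphism $\tr\ot\tr\cong\tr\ot\tr^t$ of left $\tr$-comodules) and ${}^{\tr}\C\cong\yd$, and verify that the retract reduction of the first step is compatible with $\xi$, with $(\mathbb{A})$ and with the forgetful functor; this is routine but lengthy in graphical calculus. The purely computational alternative (verify the square on general $M,N$ for `if', test on $\tr$ for `only if') sidesteps the reduction at the cost of long manipulations with the $R$-matrix axioms \eqref{qt1}--\eqref{qt4} and the transmutation formulae.
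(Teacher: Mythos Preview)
The paper does not supply a proof of this proposition; it is quoted from \cite[Theorem~3.6]{ZZ} and used as a black box, so there is nothing in the present paper to compare your argument against. Let me therefore comment on your sketch on its own merits.

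Your reduction strategy---embed $M$ into $\tr\ot M^t$ via $\chi^-_M$, use exactness and naturality to reduce the braiding square for $(M,N)$ to that for $(\tr\ot M^t,\tr\ot N^t)$, then decompose via the hexagon axioms into four atomic pairs---is sound in outline. Case $(M^t,N^t)$ is indeed automatic from trivializability, and case $(M^t,\tr)$ is indeed diagram $(\mathbb{A})$, since $\phi_{X^t,-}=\psi_{X^t,-}$ when the trivial object is the \emph{left} operand.

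There is, however, a gap in your treatment of the pair $(\tr,N^t)$. Here the trivial object sits on the \emph{right}, and the Yetter--Drinfeld braiding $\phi_{M,X^t}(m\ot x)=m_{(-1)}\cdot x\ot m_{(0)}$ depends on the full coaction of the left factor; it is \emph{not} $\psi_{M,X^t}$ in general. Diagram $(\mathbb{A})$ in Lemma~\ref{lemmaalphaaa} concerns only the $\psi$-braiding of $\C=\HM$, so it does not cover this case and your assertion that ``when exactly one operand is trivial this is precisely diagram $(\mathbb{A})$'' is not correct as stated. A separate argument is needed: one can express $\phi_{M,X^t}$ through the left $\tr$-coaction $\chi^-_M$ (which $\alpha_A$, as a functor on ${}^{\tr}\C$, respects) combined with the right $\C$-module structure $\theta$, and then verify the square directly from these compatibilities; but this has to be done, and it is not a reformulation of $(\mathbb{A})$.

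Your second step is plausible but not carried out, as you acknowledge. The handling of cocommutativity is also elliptical: transporting $\alpha_A$ along ${}^{\tr}\C\cong\yd$ automatically puts $A=\alpha_A(\tr)$ in $\yd$ via its \emph{left} $\tr$-coaction, but the definition of quantum commutativity requires that this match, via $\sigma$, the \emph{right} bi-Galois coaction. You assert this is ``part of the equivalence'' without indicating how braidedness of $\alpha_A$ forces it; this deserves at least a sentence pointing to where in the $(\tr,\tr)$ computation it emerges.
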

Combining this proposition with our result \eqref{eqbigalisautaf}, we see that the group of quantum commutative bi-Galois objects corresponds precisely to the group of braided monoidal autoequivalences which are trivializable on $\mr$ and satisfying (A), say $\autbra$. Indeed, if $A$ is quantum commutative $\tr$-bi-Galois object, then, as we've discussed in the previous section, $\alpha_A = A \ct_{\tr} -$ is a braided autoequivalence of $\yd$ (use the equivalence in Lemma \ref{lemmaydistrcom}), trivializable on $\mr$ and satisfying diagram (A). 

Conversely, let $\alpha \in \autbra$. In particular, $\alpha \in \autaf{\yd,\mr}$. By Theorem \ref{thmahbigal}, $\alpha(\tr)$ is a faithfully flat $\tr$-bi-Galois object and $\alpha \cong \alpha(\tr)\ct_{\tr} -$. But then by Proposition \ref{propqcisbraided}, $\alpha(\tr)\ct_{\tr} -$ being a braided autoequivalence implies that $\alpha(\tr)$ is quantum commutative.

Next, we show that in this specific case, the condition (A) can be dropped. Namely, any braided monoidal autoequivalence $\alpha : \yd \ra \yd$, or ${}^{\tr}(\HM) \ra {}^{\tr}(\HM)$, which is trivializable on $\HM$ will automatically satisfy diagram (A). Indeed, if $\alpha$ is braided, then
\begin{diagram}
\alpha(U) \ot \alpha(V)	&	\rTo^{\varphi_{U,V}}	&	\alpha(U\ot V)	\\
\dTo^{\phi_{\alpha(U)\ot \alpha(V)}}			&							&	\dTo_{\alpha(\phi_{U,V})}	\\
\alpha(V) \ot \alpha(U)	&	\rTo^{\varphi_{V,U}}	&	\alpha(V\ot U)
\end{diagram}

for any $U,V \in {}^{\tr}(\HM) = \yd$. As observed above, if $X \in {}^{\tr}(\HM)$ has a trivial $\tr$-comodule structure, then $\phi_{X,-} = \psi_{X,-}$. Moreover, $\alpha$ being trivializable implies that $\alpha(X) \cong X$ in ${}^{\tr}(\HM) = \yd$ such that $\alpha(X)$ will also have a trivial $\tr$-comodule structure, hence also $\phi_{\alpha(X),-} = \psi_{\alpha(X),-}$. Thus for $U = X^t$ in the above diagram, we get

\begin{diagram}
\alpha(X^t) \ot \alpha(V)	&	\rTo^{\varphi_{X^t,V}}	&	\alpha(X^t\ot V)	\\
\dTo^{\psi_{\alpha(X^t)\ot \alpha(V)}}			&							&	\dTo_{\alpha(\psi_{X^t,V})}	\\
\alpha(V) \ot \alpha(X^t)	&	\rTo^{\varphi_{V,X^t}}	&	\alpha(V\ot X^t)
\end{diagram}

Thus $\alpha$ satisfies diagram (A). Let's denote the group of isomorphism classes of braided monoidal autoequivalences of $\yd$ trivializable on $\mr$ by $\autbr$. We have obtained the following proposition.

\begin{proposition}\label{qcbigalisequiv} 
Let $(H,R)$ be a finite dimensional quasitriangular Hopf algebra. The group of quantum commutative $\tr$-bi-Galois objects is isomorphic to the group of isomorphism classes of braided (monoidal) autoequivalences of $\yd$ trivializable on $\mr$, that is
\[
\gqc{\tr} \longrightarrow \autbr, \ \  A\mapsto A\Box_{\tr}-
\]
is a group isomorphism with inverse given by $\alpha\mapsto \alpha(\tr)$.
\end{proposition}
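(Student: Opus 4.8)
The plan is to assemble results already in hand rather than to prove anything new from scratch. First I would start from the group isomorphism \eqref{eqbigalisautaf},
\[
\mathrm{BiGal}(\tr) \overset{\sim}{\lra} \autaf{\yd}{\HM},\qquad A \longmapsto A\ct_{\tr}-,
\]
with inverse $\alpha \mapsto \alpha(\tr)$; this is Proposition \ref{propbigalisauta} applied with $B=\tr$, combined with the identification ${}^{\tr}(\HM)=\yd$ of Lemma \ref{lemmaydistrcom} and the observation that an autoequivalence of $\yd$ is trivializable on $\HM$ exactly when it is trivializable on $\mr$, since $\HM$ sits inside $\yd$ via $\lambda_1$ precisely as the subcategory $\mr$.

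Next I would restrict this isomorphism to the subgroup $\gqc{\tr}\leq\mathrm{BiGal}(\tr)$. By Proposition \ref{propqcisbraided}, an $\tr$-bi-Galois object $A$ is quantum commutative if and only if $A\ct_{\tr}-$ is a braided autoequivalence of ${}^{\tr}(\HM)=\yd$. Hence the displayed isomorphism carries $\gqc{\tr}$ bijectively onto the subgroup of $\autaf{\yd}{\HM}$ consisting of the braided autoequivalences, which is by definition $\autbra$; conversely, for $\alpha\in\autbra$ one uses Theorem \ref{thmahbigal} to see that $\alpha(\tr)$ is a faithfully flat $\tr$-bi-Galois object with $\alpha\cong\alpha(\tr)\ct_{\tr}-$, and then Proposition \ref{propqcisbraided} once more to conclude that $\alpha(\tr)$ is quantum commutative. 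Since the underlying bijection is a group homomorphism and restricts to $\gqc{\tr}$, this gives a group isomorphism $\gqc{\tr}\cong\autbra$ implemented by the asserted maps.

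Finally I would invoke the discussion immediately preceding the statement to drop the axiom $(\mathbb{A})$: any braided monoidal autoequivalence $\alpha$ of $\yd$ trivializable on $\mr$ automatically satisfies $(\mathbb{A})$, because the objects $X^t$ with trivial $\tr$-comodule structure are exactly the objects of $\mr$, on which $\phi_{X,-}=\psi_{X,-}$, and trivializability forces $\alpha(X^t)$ to again carry the trivial $\tr$-comodule structure, so that specializing the braiding-compatibility square of $\alpha$ to $U=X^t$ is precisely diagram $(\mathbb{A})$. Therefore $\autbra=\autbr$, and composing the two isomorphisms yields the stated group isomorphism $\gqc{\tr}\to\autbr$, $A\mapsto A\ct_{\tr}-$, with inverse $\alpha\mapsto\alpha(\tr)$.

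The substantive work has all been done upstream (in Propositions \ref{propbigalisauta} and \ref{propqcisbraided}, Theorem \ref{thmahbigal}, and the automaticity of $(\mathbb{A})$), so I do not expect a genuine obstacle here; the one point requiring care is the elementary bookkeeping that the restriction of a group isomorphism to the preimage of a subgroup is again a group isomorphism onto that subgroup, together with the compatibility between trivializability on $\HM$ and trivializability on $\mr$ under the equivalence of Lemma \ref{lemmaydistrcom}.
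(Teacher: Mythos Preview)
Your proposal is correct and follows essentially the same route as the paper. The paper's argument, given in the paragraphs immediately preceding the proposition, also starts from \eqref{eqbigalisautaf}, restricts via Proposition \ref{propqcisbraided} to obtain $\gqc{\tr}\cong\autbra$ (using Theorem \ref{thmahbigal} for the converse direction), and then observes that condition $(\mathbb{A})$ is automatic for braided autoequivalences trivializable on $\mr$, yielding $\autbra=\autbr$.
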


\subsection{A new characterization for $\mathbf{BM(k,H,R)}$}

The next goal is to give a new characterization for the Brauer group $BM(k,H,R)$ of $H$-module algebras. First remark that a $(\C,\D)$-bimodule category is the same as a left $\C \boxtimes \D\op$-module category, where $\boxtimes$ denotes the Deligne tensor product of abelian categories (cf. \cite{deligne}). Recall from \cite{etingofostrik} the following definition of an exact module category.
\begin{definition}\label{defexactmodcat}
Let $\C$ be a tensor category. A $\C$-module category $\M$ is said to be \emph{exact}\index{module category!exact} if for any projective object $X \in \C$ and every object $M \in \M$ the object $X \ast M$ is projective in $\M$.
\end{definition}
\begin{definition}[\cite{eno}]\label{definvmodcat}
An exact $\C$-bimodule category $\M$ is said to be \emph{invertible}\index{module category!bimodule category!invertible} if there exists an exact $\C$-bimodule category $\N$ such that
\[
\M \boxtimes_{\C} \N \simeq \N \boxtimes_{\C} \M \simeq \C
\]
where $\C$ is viewed as a $\C$-bimodule category via the regular left and right actions of $\C$.

The group of equivalence classes of invertible $\C$-bimodule categories is called the \emph{Brauer-Picard group of $\C$}\index{Brauer-Picard group} and is denoted BrPic$(\C)$.
\end{definition}

If $\C$ is also braided, we can turn any left $\C$-module category into a $\C$-bimodule category, the right $\C$-action is defined as follows: $M \ast X =: X \ast M$ via the braiding for all $X\in\C$ and $M\in\M$. A $\C$-bimodule category is said to be \emph{one-sided}\index{module category!bimodule category!one-sided} if it is equivalent to a bicomodule category with right $\C$-action induced from the left, as just described. Therefore, when $\C$ is braided, the group BrPic$(\C)$ contains a subgroup Pic$(\C)$ consisting of equivalence classes of one-sided invertible $\C$-bimodule categories. Pic$(\C)$ is called the \emph{Picard group of $\C$}\index{Picard group}.

If $A$ is an algebra in $\C$, the category of right $A$-modules in $\C$ is naturally a left $\C$-module category via
\[
\C \times \C_A \ra \C_A,\ (X,M) \mapsto X\ot M
\]
Here the object $X\ot M$ has the structure of a right $A$-module in $\C$ via $X \ot \mu^+$, where $\mu^+ : M \ot A \ra M$ denotes the right $A$-action on $M$.

We quote
\begin{proposition}[{\cite[Proposition 3.4]{davydovnikshych}}]\label{propcacb}
Let $\C$ be a braided tensor category and let $A$ and $B$ be exact algebras in $\C$. Then
\begin{align}\label{eqcacb}
\C_A \btc \C_B \simeq \C_{A\ot B}
\end{align}
\end{proposition}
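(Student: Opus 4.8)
The plan is to realise the equivalence as the functor induced on the relative tensor product by the external product $(M,N)\mapsto M\ot N$, and then to prove it is an equivalence by passing to bimodule categories, where the exactness of $A$ and $B$ does the work. Recall first that in a braided category $A\ot B$ is an algebra with multiplication $(\nabla_A\ot\nabla_B)\circ(A\ot\phi_{B,A}\ot B)$ and unit $\eta_A\ot\eta_B$, and that for $M\in\C_A$ and $N\in\C_B$ the object $M\ot N$ is a right $A\ot B$-module via $(\mu^+_M\ot\mu^+_N)\circ(M\ot\phi_{N,A}\ot B)$. This gives a functor $\widetilde{F}:\C_A\ts\C_B\to\C_{A\ot B}$, $(M,N)\mapsto M\ot N$. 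I would check that $\widetilde{F}$ is $\C$-balanced: with the right $\C$-action on $\C_A$ and the left $\C$-action on $\C_B$ both given by tensoring in $\C$ (the former obtained from the left $\C$-action by the braiding, as in the paragraph preceding the Proposition), the balancing isomorphism $\widetilde{F}(M\ast X,N)=(X\ot M)\ot N\to M\ot(X\ot N)=\widetilde{F}(M,X\ast N)$ is $\phi_{X,M}\ot N$ (up to replacing $\phi$ by $\phi\inv$, according to conventions), and the verification that it is $A\ot B$-linear and satisfies the balancing coherence reduces to naturality of $\phi$ and the hexagon identities. By the universal property of $\btc$, $\widetilde{F}$ factors through a right-exact $\C$-module functor $F:\C_A\btc\C_B\to\C_{A\ot B}$; since $\widetilde{F}(X\ot A,\,B)\cong X\ot(A\ot B)$ as $A\ot B$-modules for every $X\in\C$ (where $X\ot A$ is the free right $A$-module), $F$ hits all free $A\ot B$-modules and, being right exact, is essentially surjective.

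It remains to show $F$ is fully faithful, and this is where exactness of $A$ and $B$ enters. The cleanest route is to compute $\btc$ via bimodule categories: using the braiding, $\C_A$ is equivalent, as a right $\C$-module category, to the category ${}_A\C$ of left $A$-modules with its tautological right $\C$-action, so that $\C_A\btc\C_B\simeq{}_A\C\btc\C_B$; then the standard base-change identification ${}_A\C\btc\mathcal{N}\simeq{}_A\mathcal{N}$ of the relative tensor product with the category of left $A$-modules internal to a $\C$-module category $\mathcal{N}$ — valid here because $A$, $B$ exact makes all the module categories exact (cf.\ \cite{ostrik,greenough,eno}) — gives $\C_A\btc\C_B\simeq{}_A\C_B$, the category of $(A,B)$-bimodules in $\C$; finally, in a braided category an $(A,B)$-bimodule is the same datum as a right $A\ot B$-module, the left $A$-action being converted into a right $A$-action by the braiding and then amalgamated with the right $B$-action exactly as in the module-structure formula above, whence ${}_A\C_B\simeq\C_{A\ot B}$. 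One checks that the composite of these equivalences is (isomorphic to) $F$. Alternatively, staying closer to $F$ itself, exactness lets one compute $\btc$ by the usual relative-balanced-product recipe and reduce full faithfulness of the (exact) functor $F$ to the case of free modules, where it becomes an identity between morphism objects in $\C$ built from $\nabla_A$, $\nabla_B$, the units and $\phi$, verified by a hexagon computation; a standard argument presenting objects as cokernels of sums of free modules then promotes this to full faithfulness.

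In both routes the technical heart is the same: the hexagon bookkeeping required to make the braiding interact correctly with the algebra and module structures, and the use of exactness of $A$ and $B$ to make $\btc$ tractable — either to identify it with the bimodule category ${}_A\C_B$, or to restrict attention to free modules. This second point, rather than any individual calculation, is the place where the hypothesis of the Proposition genuinely does its work, and it is the main thing to get right.
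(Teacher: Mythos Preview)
The paper does not give its own proof of this proposition: it is quoted verbatim as \cite[Proposition 3.4]{davydovnikshych} and used as a black box, so there is nothing in the present paper to compare your argument against.

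That said, your outline is essentially the proof one finds in the cited source. The identification $\C_A\btc\C_B\simeq{}_A\C_B$ via exactness, followed by the braided equivalence ${}_A\C_B\simeq\C_{A\ot B}$ that turns a left $A$-action into a right one using $\phi$, is exactly the route taken in \cite{davydovnikshych}. Your description of the balancing isomorphism and of why exactness is the genuine hypothesis is accurate. One small caution: the step ${}_A\C\btc\mathcal{N}\simeq{}_A\mathcal{N}$ you invoke is itself a nontrivial statement about relative Deligne products that relies on the module categories being exact (or on semisimplicity in the fusion setting of \cite{eno}); you correctly flag this as the place where the hypothesis works, but in a full write-up you would need to either cite it precisely or supply the argument, rather than leave it as ``standard''.
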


As ${}_A\C$ considered as a right $\C$-module category is equivalent to $\C_{\oA}$, we obtain
\begin{align}\label{eqacbc}
{}_A\C \btc {}_B\C \simeq \C_{\oA} \btc \C_{\oB} \simeq \C_{\oA\ot\oB} \cong \C_{\ol{B\ot A}} \simeq {}_{B\ot A}\C
\end{align}
\mbox{}\\[1ex]
Let $(H,R)$ be a finite dimensional quasitriangular Hopf algebra and let $\C$ be the braided monoidal category $\HM$. We can relate the Picard group of $\HM$ to the Brauer group of $\HM$. It is claimed in \cite{davydovnikshych} that the Picard group of any braided tensor category $\C$ is isomorphic to the group of Morita equivalence classes of exact Azumaya algebras (where an algebra $A$ is said to be \emph{exact} if the category $\C_A$ is exact). We show that, for $\C = \HM$, any Azumaya algebra is exact. Accordingly, the Picard group of $\HM$ will be isomorphic to the Brauer group of $\HM$. Let us give a complete proof in the following proposition.

\begin{proposition}\label{picisbr}
The Picard group of $\C$ is isomorphic to the Brauer group of $\HM$.
\[
\Pic(\HM) \cong \BM(k,H,R)
\]
\end{proposition}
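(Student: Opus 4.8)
The plan is to split the statement into the formal half, supplied by \cite{davydovnikshych}, and the one piece of genuine input: that over $\C=\HM$ \emph{every} Azumaya algebra is exact. First I would recall, from \cite{davydovnikshych} together with \eqref{eqcacb}/\eqref{eqacbc}, that for a braided finite tensor category $\C$ the rule $A\mapsto\C_A$ — carrying its canonical left $\C$-module structure, promoted to a one-sided $\C$-bimodule category via the braiding of $\C$ — gives a group isomorphism from the group of Morita equivalence classes of exact Azumaya algebras in $\C$ (with product $[A]\cdot[B]=[A\ot B]$) onto $\Pic(\C)$; multiplicativity here is precisely $\C_A\btc\C_B\simeq\C_{A\ot B}$, and the unit of $\Pic(\C)$ is the class of the Morita-trivial Azumaya algebras $\underline{\mathrm{End}}(P)$. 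Granting that every Azumaya algebra of $\HM$ is exact, the group of Morita classes of exact Azumaya algebras of $\HM$ is exactly the group of Morita classes of \emph{all} Azumaya algebras of $\HM$, i.e.\ $\Br(\HM)=\BM(k,H,R)$, and $\Pic(\HM)\cong\BM(k,H,R)$ follows; so all the content sits in the exactness claim.

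To prove that an Azumaya algebra $A$ in $\HM$ is exact — that is, that $\C_A$ is an exact $\HM$-module category — I would argue as follows. Since $H$ is finite-dimensional, $\HM$ is a finite tensor category whose regular module $H$ is a projective generator and in which every projective object is a direct summand of a finite sum of copies of $H$; hence it suffices to show that $H\ast M=H\ot M$ is projective in $\C_A$ for every $M\in\C_A$. I would use the untwisting isomorphism of $H$-modules $H\ot\underline M\xrightarrow{\ \sim\ }H\ot M$, $h\ot m\mapsto\sum h_1\ot h_2\cdot m$ (with $\underline M$ the vector space underlying $M$ given the trivial $H$-action, and $H$ acting on the first tensorand of $H\ot\underline M$), and transport the $A$-action $H\ot\mu^+_M$ along it; a short computation — using that $\mu^+_M$ is $H$-linear, together with the antipode identities — shows the transported action on $H\ot\underline M$ is $(h\ot m)\triangleleft a=\sum h_1\ot m\triangleleft(S(h_2)\cdot a)$ and identifies $H\ast M$, as an object of $\C_A$, with the module \emph{induced} from the right $A$-module underlying $M$ along the canonical algebra embedding of $A$ into the associated smash-type algebra presenting $\C_A$. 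As induction along this embedding sends projective right $A$-modules to projective objects of $\C_A$, the argument is complete as soon as every right $A$-module is projective over the $k$-algebra $A$, i.e.\ as soon as $A$ is semisimple (indeed separable) as a $k$-algebra.

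The hard part is therefore this last reduction: deducing that the $k$-algebra underlying an Azumaya algebra is semisimple from the Azumaya isomorphism $A\#\overline{A}\cong\underline{\mathrm{End}}(A)$. This is not automatic — $\HM$ is far from semisimple in general, an Azumaya algebra need not even be projective as an $H$-module, and $A\#\overline{A}$ is a \emph{braiding-twisted} tensor product rather than $A\ot_k A^{\mathrm{op}}$, so the classical implication ``Azumaya $\Rightarrow$ separable'' cannot simply be quoted: the $R$-matrix must be used to produce a $k$-linear bimodule splitting of $\nabla_A$ (equivalently, to recognize the standard module of $\underline{\mathrm{End}}(A)$ as a faithful projective module forcing semisimplicity of $A$). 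Once this is established, the remaining identifications are bookkeeping, and one obtains the desired isomorphism $\Pic(\HM)\cong\BM(k,H,R)$.
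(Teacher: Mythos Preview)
Your overall plan---reduce to showing that every Azumaya algebra in $\HM$ is exact, then invoke the identification of $\Pic(\C)$ with Morita classes of exact Azumaya algebras from \cite{davydovnikshych}---is exactly the paper's strategy. The divergence, and the gap, lies in how you establish exactness.

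You reduce exactness of $\C_A$ to the claim that the underlying $k$-algebra of $A$ is semisimple, and you candidly label this the ``hard part'' without actually proving it. This is a genuine gap. The naive argument ``$A\ot\oA\cong\mathrm{End}_k(A)$ is simple, hence $A$ is simple'' does not go through: for a plain (non-$H$-stable) ideal $I\subset A$, the subspace $I\ot\oA$ need not be a two-sided ideal of the \emph{braided} product, since left multiplication by $c\ot\bar d$ sends $a\ot\bar b$ to $c(R^2\cdot a)\ot(\cdots)$ and $R^2\cdot a$ has no reason to remain in $I$. Nor is the Jacobson radical of $A$ obviously $H$-stable when $H$ is not cosemisimple. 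So the separability you need does not follow from the Azumaya isomorphism by any routine manipulation, and your outline stops short of a proof.

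The paper bypasses this question entirely. It identifies $\C_A\simeq{}_{\oA\#H}\M$ and applies the criterion of Andruskiewitsch--Mombelli \cite[Proposition~1.20]{andruskiewitschmombelli}: the module category ${}_{\oA\#H}\M$ is exact provided the right $H$-comodule algebra $\oA\#H$ is $H$-simple. A Fundamental-Theorem-of-Hopf-modules argument reduces this to $H$-simplicity of $\oA$, and \emph{that} follows directly from the Azumaya condition: if $J\subset\oA$ is a nonzero proper $H$-stable ideal, then $A\ot J$ \emph{is} a two-sided ideal of the braided $A\ot\oA$ (precisely because $J$ is $H$-stable, so the $R$-matrix twist keeps elements inside $J$), contradicting simplicity of $A\ot\oA\cong\mathrm{End}_k(A)$. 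The point is that only $H$-simplicity---not simplicity over $k$---is required, and only $H$-simplicity is what the braided Azumaya isomorphism actually delivers.
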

\begin{proof}
Assume $A$ is $H$-Azumaya. In particular, $A$ is an algebra in $\C$. Moreover 
\[
\C_A \simeq {}_{\oA}\C = {}_{\oA} (\HM ) =  {}_{\oA \# H}\M
\]
where $\oA$ is the opposite algebra. $\oA \# H$ is a right $H$-comodule algebra with right coaction $\rho(a\# h) = (a \# h_1) \ot h_2$ for $a \in \oA$, $h\in H$. If we can show that $\oA \# H$ is $H$-simple, then $\oA \# H$ is exact by \cite[Proposition 1.20(i)]{andruskiewitschmombelli}. To prove that $\oA \# H$ is $H$-simple, it is sufficient to show that $\oA$ is $H$-simple. Indeed, let $J$ be an $H$-ideal of $\oA \# H$. One can check that $J$ is an $H$-Hopf module. By the Fundamental Theorem of Hopf modules, we obtain $J \cong I \ot H$ as $H$-Hopf modules, where $I = J^{coH}$. $I$ will then be an $H$-ideal of $\oA$. If $\oA$ is shown to be $H$-simple, $I$ must be trivial, implying that either $J \cong H$ or $J \cong \oA \# H$. In the first case however, $J$ will not be an $H$-ideal of $\oA \# H$. Thus $\oA \# H$ will not contain a non-trivial $H$-ideal if $\oA$ is $H$-simple. So let us show that $\oA$ is $H$-simple. Let $J$ be a non-trivial $H$-ideal of $\oA$, in particular $J$ is an $H$-submodule ideal of $\oA$. Consider $A\ot \oA$ which has the braided product
\begin{align}\label{eqaeproduct}
&(a \ot \ol{b} )(c \ot \ol{d} ) = a (R^2\cdot c) \ot  \ol{(r^2\cdot d)(r^1R^1\cdot b)}	
\end{align}
for $a,b,c,d \in A$. Consider the subset $A \ot J$ which is now easily seen to be a non-trivial ideal of $A \ot \oA$. The latter is an Azumaya algebra ($A$ is, hence so are $\oA$ and $A\ot\oA$). Since $k$ is a field, $A \ot \oA$ is simple. Contradiction. Thus $\oA$ is $H$-simple. Hence, so is $\oA \# H$, and therefore $\oA \# H$ is exact. Whence ${}_{\oA \# H}\M = \C_A$ is an exact module category.

By Proposition \ref{propcacb} and the definition of an Azumaya algebra in $\C$, we have
\begin{align*}
&\C_{\oA} \btc \C_A \simeq \C_{\oA\ot A} \simeq \C
\intertext{Similarly by using \eqref{eqacbc}, we get}
&\C_A \btc \C_{\oA} \simeq {}_{\oA}\C \btc {}_A\C \simeq {}_{A\ot\oA}\C \simeq \C
\end{align*}
Hence, $\C_A$ is an exact invertible (one-sided) module category over $\C$.

Conversely, let $\M$ be an exact invertible (one-sided) module category. By \cite[Proposition 4.2]{eno} we have
\[
\M \btc \M\op \simeq \M\op \btc \M \simeq \C
\]
By \cite[Theorem 1.14]{andruskiewitschmombelli} there exists an (exact) algebra $A$ in $\HM$ such that $\M$ is equivalent to $\C_A$. Then $\M\op \simeq \C_{\oA}$. Again, by using \eqref{eqcacb} and \eqref{eqacbc}, we see
\begin{align*}
&\C \simeq \M\op \btc \M \simeq \C_{\oA} \btc \C_A \simeq \C_{\oA\ot A}	\\
&\C \simeq \M \btc \M\op \simeq \C_A \btc \C_{\oA} \simeq {}_{\oA}\C \btc {}_A\C \simeq {}_{A\ot\oA}\C
\end{align*}
Thus by definition, the algebra $A$ is Azumaya.

Finally, the correspondence is one of groups because of Proposition \ref{propcacb}.
\end{proof}

\subsection{A new characterization for $\mathbf{\ppi : \BM(k,H,R) \ra \gqc{\tr}}$ }
As we have found categorical characterizations for both groups $\BM(k,H,R)$ and $\gqc{\tr}$, it should be necessary to obtain an interpretation for the morphism $\ppi : \BM(k,H,R) \ra \gqc{\tr}$.  Let $H$ be a finite dimensional Hopf algebra. 
It is obvious that $\HM = \MHs$ as braided monoidal categories. One can further identify the two Yetter-Drinfeld module categories naturally:
\begin{equation}\label{lemmaydident}
 \yd \cong \dy
\end{equation}

Let us recall how the morphism $\ppi : \BM(k,H,R) \ra \gqc{\tr}$ is defined.  Suppose $A$ is an Azumaya algebra in the braided monoidal category $\HM = \MHs$, that is $[A] \in \BM(k,H,R) = \mathrm{BC}(k,H^*,\R)$, where $\R=R^*$. In \cite[Corollary 4.2]{zhang}) it was shown that any element of $\mathrm{BC}(k,H^*,\R)$ can be represented by an Azumaya algebra that is a smash product. Any smash product is a Galois extension of its coinvariants. Thus, any element of $\mathrm{BC}(k,H^*,\R)$ can be represented by an Azumaya algebra that is an $H^*$-Galois extension of its coinvariants. As a result, we're allowed to assume that our Azumaya algebra $A \in \HM=\MHs$ is $H^*$-Galois over its coinvariant subalgebra $A_0 = A^{coH^*}$. Observe that
\begin{align}\label{eqazero}
A_0 = A^{co{H^*}} = {}_HA = \{ a\in A \st h \cdot a = \epsilon(h)a, \forall h \in H \}
\end{align}
Now $\pi(A)$ is defined as the centralizer subalgebra $C_A(A_0)$ of $A_0$ in $A$. Then $\pi(A) \in \dy$ where $\pi(A)$ is an $H^*$-subcomodule of $A$ and the right $H^*$-action is the Miyashita-Ulbrich-action (or MU-action), given by
\[
c \lhu h^* = x_i(h^*) c y_i(h^*)
\]
for $c\in\pi(A)$ and $h^*\in H^*$, where $x_i(h^*)\ot y_i(h^*) = can\inv (1 \ot h^*)$. By the identification (\ref{lemmaydident}), we get that $\pi(A) \in \yd$ is an $H$-submodule of $A$ and the left $H$-coaction is dual to the MU-action. As a left-left Yetter-Drinfeld module, $\pi(A)$ can be seen as an $_RH$-bicomodule. Finally, it is shown that $\pi(A)$ is a bi-Galois object and the morphism $\ppi : \BM(k,H,R) \ra \gqc{\tr}$ sends a class $[A]$ to $[\pi(A)]$.
\\\\
For the remainder of the paper, we will graphically denote the braiding of $\HM$ (and its inverse) by
\begin{align}
\psi_{M,N}
=
\gbeg{2}{3}
\got{1}{M}\got{1}{N}\gnl
\gbr\gnl
\gob{1}{N}\gob{1}{M}
\gend
&\quad , \quad
\psi_{M,N}\inv
=
\gbeg{2}{3}
\got{1}{N}\got{1}{M}\gnl
\gibr\gnl
\gob{1}{M}\gob{1}{N}
\gend
\label{notbraiding1}
\intertext{for $M,N \in \HM$, while the braiding of $\yd$ (and its inverse) is denoted respectively}
\phi_{X,Y}
=
\gbeg{2}{3}
\got{1}{X}\got{1}{Y}\gnl
\gbrc\gnl
\gob{1}{Y}\gob{1}{X}
\gend
&\quad , \quad
\phi_{X,Y}\inv
=
\gbeg{2}{3}
\got{1}{Y}\got{1}{X}\gnl
\gibrc\gnl
\gob{1}{X}\gob{1}{Y}
\gend
\label{notbraiding2}
\end{align}
for $X,Y \in \yd$.

We need the following lemma in the sequel. 
\begin{lemma}Let $(H,R)$ be a quasitriangular Hopf algebra. Then
\begin{align}\label{eqqybe1}
u^1 p^1 U^1 \ot u^2 r^1 R^1 \ot p^2 r^2 \ot U^2 R^2
=
p^1 U^1 u^1 \ot r^1 R^1 u^2 \ot r^2 p^2 \ot R^2 U^2
\end{align}
\end{lemma}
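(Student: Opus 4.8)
The plan is to prove the identity \eqref{eqqybe1} by repeated application of the quasitriangularity axioms \eqref{qt1}--\eqref{qt4}, essentially a quantum Yang--Baxter--type manipulation. The four tensor factors carry copies $u,p,U,r,R$ of the $R$-matrix; reading the left-hand side, the factor $u$ sits on the left in slots $1$ and $2$, while on the right-hand side the factor $u$ has been moved to the right in those two slots. So morally the statement says that the sub-expression $u^1(-)\ot u^2(-)$ can be commuted past the product of the remaining $R$-matrix copies in the first two legs, which is exactly the content of \eqref{qt4} (the relation $R\Delta = \Delta\op R$) applied with $h$ replaced by the appropriate element of $H$.

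The key steps I would carry out, in order, are as follows. First, I would recognise that the expression $p^1U^1\ot r^1R^1\ot r^2p^2\ot R^2U^2$ on the right (or its analogue on the left) is itself obtained by applying \eqref{qt2} and \eqref{qt3} twice: \eqref{qt2} splits one $R$-matrix into the first two legs via $\Delta(R^1)\ot R^2 = R^1\ot r^1\ot R^2r^2$, and \eqref{qt3} does the analogous thing for the last two legs via $R^1\ot\Delta(R^2) = R^1r^1\ot r^2\ot R^2$. Concretely, I would start from a single $R$-matrix $U^1\ot U^2$, apply \eqref{qt3} to expand $U^2$ into legs $3$ and $4$, then apply \eqref{qt2} to one of the resulting copies to expand it into legs $1$ and $2$; this produces a product of $R$-matrix copies distributed across the four legs in the pattern appearing in \eqref{eqqybe1}. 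Second, with both sides rewritten as $(\Delta\ot\Delta)$-type expansions applied to nested $R$-matrices, I would use \eqref{qt4} in the first two legs to slide the copy $u^1\ot u^2$ from the left of the product to the right. Since $u^1\ot u^2$ is multiplied, in legs $1$ and $2$, against $p^1U^1\ot r^1R^1$ (which is a comultiplication-type expression in those legs), \eqref{qt4} in the form $R^1h_1\ot R^2h_2 = h_2R^1\ot h_1R^2$ — or rather its iterate for $\Delta^{(2)}$ — applies and yields precisely the required commutation. Third, I would check that the three untouched legs (namely legs $3$ and $4$, carrying $p^2r^2\ot R^2U^2$ versus $r^2p^2\ot R^2U^2$) match up; the apparent discrepancy $p^2r^2$ versus $r^2p^2$ is reconciled because the names $p,r,R,U$ are independent dummy copies of $R$ and one is free to relabel, together with the coassociativity bookkeeping from applying \eqref{qt2}/\eqref{qt3}.

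The main obstacle I anticipate is purely combinatorial: keeping track of which dummy copy ($u,p,U,r,R$, and possibly an auxiliary one introduced during the expansion) ends up in which leg after each application of \eqref{qt2}, \eqref{qt3}, and \eqref{qt4}. Because \eqref{qt2} and \eqref{qt3} each introduce a fresh copy of $R$ and redistribute comultiplications, the natural approach is to work backwards from both sides to a common normal form — e.g.\ the expansion of $(\Delta\ot\mathrm{id})(\mathrm{id}\ot\Delta)$ applied twice to a single $R$ — rather than trying to transform one side directly into the other. An alternative, and perhaps cleaner, route is to observe that \eqref{eqqybe1} is the image, under a suitable evaluation, of the hexagon/QYBE relation $R_{12}R_{13}R_{23} = R_{23}R_{13}R_{12}$ in $H^{\ot 3}$ (pushed up to $H^{\ot 4}$ by one extra comultiplication), which is a standard consequence of \eqref{qt2}--\eqref{qt4}; if that identification is made precise the proof reduces to citing the QYBE plus one application of coassociativity. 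Either way the verification is routine once the dummy-index bookkeeping is organised, so I would present it as a short computation with the \eqref{qt$i$} references attached to each step.
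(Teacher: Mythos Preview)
Your overall strategy via \eqref{qt4} is sound and does lead to a proof, but there is one genuine error in your step~3, and your description in step~1 is slightly off.

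First the error. You claim that legs $3$ and $4$ are ``untouched'' and that the discrepancy $p^2r^2$ versus $r^2p^2$ in leg~$3$ can be fixed by relabelling dummy copies. This is wrong: one cannot relabel independent copies of $R$ to reverse the order of a product. What actually happens is that \eqref{qt4} does the work for you. In $R_{ij}$ notation, the left-hand side without $u$ is $R_{13}R_{23}\cdot R_{14}R_{24}$, and each block $R_{1j}R_{2j}$ equals $(\Delta\otimes 1)(R)$ placed in legs $1,2,j$ by \eqref{qt2}. Since the product of two $\Delta$'s in legs $1,2$ is again a $\Delta$, applying \eqref{qt4} to commute $u_{12}$ past replaces each $\Delta$ by $\Delta^{op}$, turning $R_{13}R_{23}$ into $R_{23}R_{13}$ and $R_{14}R_{24}$ into $R_{24}R_{14}$. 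This is precisely what produces $r^2p^2$ in leg~$3$ and $R^2U^2$ in leg~$4$ on the right-hand side. So legs $3$ and $4$ are not untouched, and no relabelling is needed---or possible.

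A smaller correction to step~1: the expression without $u$ is not literally $(\Delta\otimes\Delta)(R)$ (that would be $R_{14}R_{13}R_{24}R_{23}$, with the opposite order in legs $1$ and $2$); it is the product of two copies of $(\Delta\otimes 1)(R)$ as above, which is what you need anyway.

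The paper takes a different route. It uses the quantum Yang--Baxter equation $R_{12}R_{13}R_{23}=R_{23}R_{13}R_{12}$ directly: first in the rearranged form $R_{12}R_{13}=R_{23}R_{13}R_{12}(R^{-1})_{23}$ to commute $u$ past $p$ (the auxiliary copies introduced then cancel against $r$ via $(S\otimes 1)(R)=R^{-1}$), and then once more in its original form to finish. Your approach via \eqref{qt2} and \eqref{qt4} is arguably more conceptual, since it isolates the single commutation $R\Delta=\Delta^{op}R$ as the reason the identity holds; the paper's route is a touch shorter because QYBE already packages \eqref{qt2}--\eqref{qt4} together.
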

\begin{proof}A quasitriangular Hopf algebra $(H,R)$ is known to satisfy the \emph{quantum Yang-Baxter equation}, that is
\[
R_{12} R_{13} R_{23} = R_{23} R_{13} R_{12}
\]
where $R_{12} = R^1 \ot R^2 \ot 1_H \in H \ot H \ot H$, etc. Or
\begin{align}\label{qybe}\tag{QYBE}
R^1 P^1 \ot R^2 Q^1 \ot P^2 Q^2 = P^1 R^1 \ot Q^1 R^2 \ot Q^2 P^2.
\end{align}
By \eqref{qybe} we have $R_{12} R_{13} = R_{23} R_{13} R_{12} (R\inv)_{23}$ or
\[
u^1 p^1 \ot u^2 \ot p^2 = p^1 u^1 \ot q^1u^2 S(v^1) \ot q^2 p^2 v^2
\]
Then
\begin{align*}
&u^1 p^1 U^1 \ot u^2 r^1 R^1 \ot p^2 r^2 \ot U^2 R^2	\\
&=p^1 u^1 U^1 \ot q^1 u^2 S(v^1) r^1 R^1 \ot q^2 p^2 v^2 r^2 \ot U^2 R^2	\\
&=p^1 u^1 U^1 \ot q^1 u^2  R^1 \ot q^2 p^2  \ot U^2 R^2	\\
&=p^1 U^1 u^1 \ot q^1 R^1 u^2 \ot q^2 p^2  \ot R^2 U^2
\end{align*}
\end{proof}

In the the following, $A$ is still assumed to be an Azumaya algebra in $\HM$ which is $H^*$-Galois over $A_0$. Let $Z$ be a left-left Yetter-Drinfeld module. We can define an $A^e$-module structure on $A \ot Z$ as follows
\begin{align}\label{defazaemod}
\mu_{A\ot Z}
=
\gbeg{4}{6}
\got{2}{A\ot\oA}\got{2}{A\ot Z} \gnl
\gcl{1}\gbr\gcl{1}\gnl
\gmu\gibrc\gnl
\gcn{2}{1}{2}{2}\gbr\gnl
\gwmuh{3}{2}{5}\gcl{1}\gnl
\gvac{1}\gob{1}{A}\gvac{1}\gob{1}{Z}
\gend
\end{align}
with notation as in \eqref{notbraiding1},\eqref{notbraiding2}, or equivalently
\[
(a\ot \ol{b}) \bullet (c \ot z) = a (R^2\cdot c)(r^2 S\inv (z\hm{-1})R^1 \cdot b) \ot r^1 \cdot z\hm{0}
\]
for $a,b,c \in A$ and $z\in Z$. Indeed
\begin{align*}
&[(a \ot \ob)(c \ot \ol{d})]\bullet (e \ot z) = [a (R^2\cdot c) \ot  \ol{(r^2\cdot d)(r^1R^1\cdot b)}] \cdot (e \ot z)\\
&= a(R^2\cdot c)(P^2\cdot e)(Q^2S\inv(z\hm{-1})P^1\cdot [(r^2\cdot d)(r^1R^1\cdot b)] \ot Q^1\cdot z\hm{0}\\
&=a(R^2\cdot c)(P^2p^2\cdot e)(Q^2 S\inv (z\hm{-1})P^1r^2\cdot d)(q^2 S\inv(z\hm{-2})p^1r^1R^1\cdot b) \ot q^1 Q^1 \cdot z\hm{0}	\hide{QT2,3}\\
&=a(R^2\cdot c)(p^2P^2\cdot e)(Q^2 S\inv (z\hm{-1})r^2P^1\cdot d)(q^2 S\inv (z\hm{-2})r^1p^1R^1\cdot b) \ot q^1 Q^1 \cdot z\hm{0}	\hide{\by{qybe}}\\
&=a(R^2\cdot c)(p^2P^2\cdot e)(Q^2 r^2S\inv (z\hm{-2})P^1\cdot d)(q^2 r^1S\inv (z\hm{-1})p^1R^1\cdot b) \ot q^1 Q^1 \cdot z\hm{0}	\hide{\by{qt4}}
\end{align*}
where the first equation follows from (\ref{eqaeproduct}), the second and the fourth hold by (\ref{defazaemod}) and \eqref{eqqybe1}
On the other hand, we have 
\begin{align*}
&(a \ot \ob ) \bullet [(c \ot \ol{d})\bullet (e \ot z)]	\\
&=(a \ot \ob ) \bullet [c (P^2\cdot e)(U^2 S\inv (z\hm{-1})P^1 \cdot d) \ot U^1 \cdot z\hm{0}]	\dome{defazaemod}\\
&=a(R^2\cdot [c (P^2\cdot e)(U^2 S\inv (z\hm{-1})P^1 \cdot d)])(q^2 S\inv ((U^1 \cdot z\hm{0})\hm{-1})R^1 \cdot b) \\
		&\hspace{7cm}\ot q^1 \cdot ((U^1 \cdot z\hm{0})\hm{0})\dome{defazaemod}\\
&=a(R^2\cdot c)(p^2P^2\cdot e)(u^2U^2 S\inv (z\hm{-1})P^1 \cdot d)(q^2 S\inv ((U^1 \cdot z\hm{0})\hm{-1})u^1p^1R^1 \cdot b) \\
		&\hspace{7cm}\ot q^1 \cdot ((U^1 \cdot z\hm{0})\hm{0})	\hide{QT3}\\
&=a(R^2\cdot c)(p^2P^2\cdot e)(u^2U^2Q^2r^2 S\inv (z\hm{-1})P^1 \cdot d)(q^2 S\inv (U^1 z\hm{-1} S(r^1))u^1p^1R^1 \cdot b) \\
		&\hspace{7cm}\ot q^1 Q^1 \cdot z\hm{0}	\hide{YD,QT2}\\
&=a(R^2\cdot c)(p^2P^2\cdot e)(Q^2r^2 S\inv (z\hm{-2})P^1 \cdot d)(q^2 r^1 S\inv (z\hm{-1})p^1R^1 \cdot b) \ot q^1 Q^1 \cdot z\hm{0}
\end{align*}
for $a,b,c,d,e \in A$ and $z\in Z$.
\begin{lemma}\label{lemmaazaeydmod}
Let $A$ be an \hide{$H^*$-Galois }Azumaya algebra in $\HM$ and let $Z$ be a left-left Yetter-Drinfeld module. $A \ot Z$ is a left-left Yetter-Drinfeld module with structures given by
\begin{equation}\label{defazydmod}
\begin{split}
&h \cdot (a \ot z) = h_1 \cdot a \ot h_2 \cdot z		\\
&\lambda_{A\ot Z} (a \ot z) = {SR^1} z_{-1} \ot {R^2\cdot a} \ot z_0
\end{split}
\end{equation}
for $a \in A$ and $z \in Z$.
\\
Together with the $\Ae$-module structure, $A \ot Z$ becomes an object in ${}_{\Ae}(\yd)$.
\end{lemma}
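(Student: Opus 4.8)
The plan is to split the statement into its comodule part --- that \eqref{defazydmod} really defines a left-left Yetter-Drinfeld module --- and its action part --- that $\mu_{A\ot Z}$ of \eqref{defazaemod} turns $A\ot Z$ into a left module over the algebra $\Ae$ inside the braided category $\yd$. The first part I would dispatch by a structural observation, and the second by checking the module-in-a-braided-category axioms, most of which have in fact already been verified in the display preceding the statement.

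For the comodule part, the point is that $(A\ot Z,\cdot,\lambda_{A\ot Z})$ is nothing but the tensor product, \emph{taken in the braided category $\yd$}, of $Z$ with the object $A$ equipped with the $\lambda_2$-Yetter-Drinfeld structure of \eqref{deflambda1en2} (so that this copy of $A$ lies in $\mri\subseteq\yd$). Indeed the prescribed action $h\cdot(a\ot z)=h_1\cdot a\ot h_2\cdot z$ is the diagonal one, and the tensor-product coaction of $\yd$, $m\ot n\mapsto m\hm{-1}n\hm{-1}\ot m\hm{0}\ot n\hm{0}$, evaluated with $a\hm{-1}\ot a\hm{0}=SR^1\ot R^2\cdot a$, is precisely $SR^1z\hm{-1}\ot R^2\cdot a\ot z\hm{0}$. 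Since $\yd$ is monoidal, $A\ot Z$ is again an object of $\yd$, and no Yetter-Drinfeld axiom has to be checked by hand.

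For the action part, I would first note that $\Ae=A\ot\oA$ is an algebra in $\yd$: $A$ is an algebra in $\HM$, hence --- through the braided monoidal embedding $(\mr,\psi)\hookrightarrow(\yd,\phi)$ given by $\lambda_1$ in \eqref{deflambda1en2} --- an algebra in $\yd$, and so is its enveloping algebra $A\ot\oA$ with product \eqref{eqaeproduct}. It then remains to verify, for $\mu_{A\ot Z}$, the four conditions for a module in a braided category: (i) associativity, which is exactly the pair of computations displayed just before the statement (both sides reduce to $a(R^2\cdot c)(p^2P^2\cdot e)(Q^2r^2S\inv(z\hm{-2})P^1\cdot d)(q^2r^1S\inv(z\hm{-1})p^1R^1\cdot b)\ot q^1Q^1\cdot z\hm{0}$, using \eqref{eqqybe1}, the axioms \eqref{qt2}--\eqref{qt4} and the Yetter-Drinfeld condition of $Z$); (ii) unitality, $(1\ot\ol{1})\bullet(c\ot z)=c\ot z$, which drops out of \eqref{qt1}, the unit axiom $h\cdot 1_A=\va(h)1_A$ and the counit of the coaction on $Z$; (iii) $H$-linearity of $\mu_{A\ot Z}$, which holds because \eqref{defazaemod} is a composite of morphisms of $\HM$ (the two multiplications and the $H$-action of $A$, the braiding $\psi^{\pm1}$, and the $\yd$-braiding, which is itself $H$-linear); and (iv) $H$-colinearity of $\mu_{A\ot Z}$, i.e.\ that it intertwines the tensor-product $\yd$-coaction on $\Ae\ot(A\ot Z)$ --- with $\Ae$ in its $\lambda_1$-structure and the $A$-slot of $A\ot Z$ in its $\lambda_2$-structure --- with $\lambda_{A\ot Z}$.

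I expect (iv) to be the main obstacle; everything else is either formal or already done. I would carry it out graphically from \eqref{defazaemod}: push the two input coactions down through the diagram using naturality of $\psi$ and of the $\yd$-braiding, the Yetter-Drinfeld compatibility of the coaction of $Z$ with the braiding, and the comodule-algebra-type identities for $A$ regarded as an object of $\HM\hookrightarrow\yd$; the residual $R$-matrix manipulations are then precisely of the kind handled by \eqref{eqqybe1} together with \eqref{qt2}--\eqref{qt4}, running in close parallel with the associativity computation that precedes the statement.
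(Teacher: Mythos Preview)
Your treatment of the Yetter--Drinfeld part is exactly the paper's: recognise the structures \eqref{defazydmod} as the tensor product in $\yd$ of $Z$ with $A$ placed in $\mri$ via $\lambda_2$, so there is nothing to verify. Your plan for the module part is also structurally the same --- associativity is the display preceding the lemma, unitality is trivial, and $H$-linearity and $H$-colinearity of $\mu_{A\ot Z}$ must be checked --- and the paper carries out (iii) and (iv) by explicit computation rather than by your diagrammatic sketch, but the content is identical.

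There is, however, one genuine error. You put $\Ae$ into $\yd$ via the embedding $(\mr,\psi)\hookrightarrow(\yd,\phi)$, i.e.\ via $\lambda_1$, and then in (iv) you propose to check colinearity with ``$\Ae$ in its $\lambda_1$-structure and the $A$-slot of $A\ot Z$ in its $\lambda_2$-structure''. This mismatch breaks the argument: the multiplications $\nabla_A$ appearing in \eqref{defazaemod} are $H$-colinear only when all copies of $A$ carry the \emph{same} coaction, and here the output slot carries $\lambda_2$. Concretely, take $b=1$; then $(a\ot\ol{1})\bullet(c\ot z)=ac\ot z$, and colinearity with $\lambda_1$ on $\Ae$ would force
\[
R^2\,S(U^1)\ot R^1\ot U^2 \;=\; S(r^1)S(P^1)\ot r^2\ot P^2,
\]
which amounts to $R_{21}=R^{-1}$, i.e.\ triangularity --- false for a general quasitriangular $H$. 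The paper is explicit about this point: it places $\Ae$ in $\mri$ via $\lambda_2$ (so $\lambda_{\Ae}(a\ot\ob)=S(R^1)S(r^1)\ot R^2\cdot a\ot\ol{r^2\cdot b}$), remarks that $\lambda_1$ ``would not necessarily yield that $A\ot Z$ becomes an object in ${}_{\Ae}(\yd)$'', and then verifies by hand both that $\Ae$ with this coaction is an algebra in $\yd$ and that $\mu_{A\ot Z}$ is $H$-colinear. Your structural shortcut (that $\Ae$ is automatically an algebra in $\yd$ through a braided embedding) survives if you instead use $(\mri,\psi)\hookrightarrow(\yd,\phi^{-1})$, but the colinearity check in (iv) still has to be done with $\lambda_2$ throughout.
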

\begin{proof}
We embed $A$ in $\yd$ by viewing it as $A \in (\mri,\psi)$, that is $A$ is equipped with the $H$-coaction $\lambda_2(a) = SR^1 \ot R^2 \cdot a$ (see \eqref{deflambda1en2}). We use $\lambda_2$ rather than $\lambda_1$. Using $\lambda_1$ we would not necessarily yield that $A\ot Z$ becomes an object in ${}_{\Ae}(\yd)$. That being said, there is a reason to choose $\lambda_2$ over $\lambda_1$ anyhow. The reason will become clear in the proof of Proposition \ref{propaotzapictz}.

The given Yetter-Drinfeld module structure in the lemma now is nothing but the natural Yetter-Drinfeld module structure of the tensor product of the two left-left Yetter-Drinfeld modules $A$ and $Z$. 
\\
$\Ae$ is an $H$-module algebra with multiplication as in \eqref{eqaeproduct}. Note that if we consider $A$ to be in $(\mri,\psi)$, we have to view $\Ae$ in $(\mri,\psi)$ as well. Thus its $H$-comodule structure is given by $\lambda_{\Ae}(a\ot \ob) = SR^1 Sr^1 \ot (R^2 \cdot a \ot \ol{r^2 \cdot b})$. Let us verify that $\Ae$ then is an algebra in $\yd$.\dome{note about braiding?}
\begin{align*}
&\lambda_{\Ae}((a\ot \ob) (c \ot \ol{d})) = \lambda_{\Ae}(a (R^2\cdot c) \ot  \ol{(r^2\cdot d)(r^1R^1\cdot b)})	\\
&=S(P^1)S(p^1) \ot P^2 \cdot (a (R^2\cdot c)) \ot \ol{ p^2\cdot ((r^2\cdot d)(r^1R^1\cdot b))}	\\
&=S(P^1)S(U^1)S(p^1)S(u^1) \ot (P^2 \cdot a)(U^2 R^2\cdot c) \ot \ol{(p^2r^2\cdot d)(u^2r^1R^1\cdot b)}	\hide{QT3}	\\
&=S(P^1)S(u^1)S(U^1)S(p^1) \ot (P^2 \cdot a)(R^2U^2 \cdot c) \ot \ol{(q^2p^2\cdot d)(q^1R^1u^2\cdot b)}	\hide{\by{eqqybe1}}		\\
&=S(P^1)S(u^1)S(U^1)S(p^1) \ot (P^2 \cdot a \ot \ol{u^2\cdot b})(U^2 \cdot c \ot\ol{p^2\cdot d})	\\
&=\lambda_{\Ae}(a\ot \ob) \lambda_{\Ae}(c \ot \ol{d})
\end{align*} 
To observe that $A \ot Z$ is an object in ${}_{\Ae}(\yd)$, observe that 
\begin{align*}
&h \cdot [(a\ot \ol{b}) \bullet (c \ot z)]	\\
&= (h_1 \cdot a\ot \ol{h_2\cdot b}) \bullet  (h_3\cdot c \ot h_4\cdot z)	\\
&= (h_1 \cdot a) (R^2h_3 \cdot c)(r^2 S\inv ((h_4 \cdot z)\hm{-1})R^1h_2 \cdot b) \ot r^1 \cdot (h_4 \cdot z)\hm{0} \dome{defazaemod}	\\
&= (h_1 \cdot a) (R^2h_3 \cdot c)(r^2 S\inv (h_4 z\hm{-1}S(h_6))R^1h_2 \cdot b) \ot r^1 h_5 \cdot z\hm{0}	\dome{YD}\\
&= (h_1 \cdot a) (h_2R^2 \cdot c)(r^2 h_6S\inv(z\hm{-1}) S\inv(h_4)h_3R^1 \cdot b) \ot r^1 h_5 \cdot z\hm{0}	\hide{\by{qt4}}\\
&= (h_1 \cdot a) (h_2R^2 \cdot c)(r^2 h_4S\inv(z\hm{-1}) R^1 \cdot b) \ot r^1 h_3 \cdot z\hm{0}	\\
&= (h_1 \cdot a) (h_2R^2 \cdot c)(h_3 r^2 S\inv(z\hm{-1}) R^1 \cdot b) \ot h_4 r^1  \cdot z\hm{0}\hide{\by{qt4}}\\
&= h \cdot [a (R^2\cdot c)(r^2 S\inv (z\hm{-1})R^1 \cdot b) \ot r^1 \cdot z\hm{0}]	\\
&= h \cdot [(a\ot \ol{b}) \bullet (c \ot z)]\dome{defazaemod}
\intertext{which means that the module structure in \eqref{defazaemod} is $H$-linear. To show that it's $H$-colinear as well, we compute}
&\lambda_{A\ot Z}((a\ot \ol{b}) \bullet (c \ot z))	\\
&=\lambda_{A\ot Z}(a (R^2\cdot c)(r^2 S\inv (z\hm{-1})R^1 \cdot b) \ot r^1 \cdot z\hm{0})	\dome{defazaemod}\\
&=S(P^1) (r^1 \cdot z\hm{0})\hm{-1} \ot P^2 \cdot [a (R^2\cdot c)(r^2 S\inv (z\hm{-1})R^1 \cdot b)] \ot (r^1 \cdot z\hm{0})\hm{0}	\\
&=S(P^1)S(U^1)S(V^1) r^1 z\hm{-1} S(q^1) \ot (P^2 \cdot a)(U^2R^2\cdot c)(V^2r^2p^2q^2S\inv (z\hm{-2})R^1 \cdot b) \\
		&\hspace{5cm} \ot p^1 \cdot z\hm{0}	\hide{\by{qt4}}\dome{YD,QT2,3,4}\\
&=S(P^1)S(U^1)  S(q^1) z\hm{-2}  \ot (P^2 \cdot a)(U^2R^2\cdot c)(p^2S\inv (z\hm{-1})q^2R^1 \cdot b)  \ot p^1 \cdot z\hm{0}	\hide{\by{qt4}}\\
&=S(P^1)S(q^1)  S(U^1) z\hm{-2}  \ot (P^2 \cdot a)(R^2U^2\cdot c)(p^2S\inv (z\hm{-1})R^2q^1 \cdot b)  \ot p^1 \cdot z\hm{0}	\hide{\by{qybe}}\\
&=S(P^1)S(q^1)  S(U^1) z\hm{-1}  \ot (P^2 \cdot a \ot q^1 \cdot b) \bullet (U^2\cdot c  \ot p^1 \cdot z\hm{0})\dome{defazaemod}\\
&=(a\ot \ob)\hm{-1} (c \ot z)\hm{-1} \ot (a\ot \ob)\hm{0} \bullet (c \ot z)\hm{0}
\end{align*}
\end{proof}

Let $A$ be an $H^*$-Galois Azumaya algebra in $\HM$ and let $Z$ be a left-left Yetter-Drinfeld module. The previous lemma allows us to consider $(A \ot Z)^A$, where $(-)^A$ is from the equivalence pair
\[
A \overset{\sim}{\ot} - : \yd \rightleftarrows {}_{\Ae}(\yd) : (-)^A
\]
(similarly) as in \cite[Proposition 2.6]{cvozyetter2}. Then
\begin{equation}\label{endofunctor}
(A \ot Z)^A = \{ \sum c_i \ot z_i \st \sum (a \ot 1)\bullet (c_i \ot z_i) = \sum (1 \ot \oa) \bullet (c_i \ot z_i), \forall a \in A \}
\end{equation}
is a Yetter-Drinfeld submodule of $A \ot Z$ with the same Yetter-Drinfeld module structures as in \eqref{defazydmod}. 

By Lemma \ref{lemmaydistrcom}, we can view $Z$ as a left $\tr$-comodule in $\HM$, thus we can consider $\pi(A) \ct_{\tr} Z$. We will relate $(A \ot Z)^A$ and $\pi(A) \ct_{\tr} Z$, but first we need an equivalent characterization for the cotensor product of two $\tr$-bicomodules in $\HM$ (which is dual to \cite[Lemma 2.9]{zhang}).

\begin{lemma}
If X and Y are two YD modules, then
\[
X \ct_{_RH} Y = \{ \sum {x_i} \ot {y_i} \in X \ot Y \st {x_i}\hm{-1}R^2 \ot {x_i}\hm{0} \ot R^1 \cdot {y_i} = S(R^1) {y_i}\hm{-1} \ot R^2\cdot {x_i} \ot {y_i}\hm{0} \}
\]
\end{lemma}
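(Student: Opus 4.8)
The plan is to read the cotensor product straight off its definition as an equalizer, and then rewrite the resulting membership condition into the stated form. Recall that $X \ct_{\tr} Y$ is by definition the equalizer in $\HM$ of the two morphisms $\chi^+_X \otimes Y$ and $X \otimes \chi^-_Y$ from $X \otimes Y$ to $X \otimes \tr \otimes Y$. Since the forgetful functor $\HM \to {}_k\mathcal{M}$ is exact (it preserves, indeed creates, equalizers), $X \ct_{\tr} Y$ is, as a $k$-subspace of $X \otimes Y$, exactly the set of elements $\sum x_i \otimes y_i$ on which these two underlying linear maps agree. Substituting the explicit left and right $\tr$-comodule structures of a Yetter--Drinfeld module from Lemma \ref{lemmaydistrcom}, namely $\chi^+_X(x) = R^2 \cdot x\hm{0} \otimes R^1 x\hm{-1}$ and $\chi^-_Y(y) = y\hm{-1} S(R^2) \otimes R^1 \cdot y\hm{0}$, turns this into the identity
\[
\sum R^2 \cdot (x_i)\hm{0} \otimes R^1 (x_i)\hm{-1} \otimes y_i \;=\; \sum x_i \otimes (y_i)\hm{-1} S(R^2) \otimes R^1 \cdot (y_i)\hm{0}
\]
in $X \otimes \tr \otimes Y$; call it $(\ast)$.

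Next, I would transform $(\ast)$ into the asserted identity. The two identities live in $X \otimes \tr \otimes Y$ and $\tr \otimes X \otimes Y$ respectively, so the idea is to apply to both sides of $(\ast)$ a $k$-linear isomorphism $X \otimes \tr \otimes Y \to \tr \otimes X \otimes Y$ built from the $R$-matrix and the antipode, which is invertible thanks to \eqref{qt1}--\eqref{qt4} and the antipode axioms. This is essentially the change of variables that, in the proof of Lemma \ref{lemmaydistrcom}, converts the cocommutative bicomodule structure of a module back into its Yetter--Drinfeld coaction; along the way one also uses $\chi^+_X = \sigma_{\tr,X} \circ \chi^-_X$ and the invertibility of the half-braiding $\sigma_{\tr,X}$. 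After applying it, one simplifies both sides using \eqref{qt1}--\eqref{qt4}, the antipode relations, and the Yetter--Drinfeld compatibility of $X$ and $Y$, and checks that the result is precisely
\[
\sum (x_i)\hm{-1} R^2 \otimes (x_i)\hm{0} \otimes R^1 \cdot y_i \;=\; \sum S(R^1)(y_i)\hm{-1} \otimes R^2 \cdot (x_i) \otimes (y_i)\hm{0}.
\]
As the isomorphism is bijective, this identity is equivalent to $(\ast)$, which is the claim.

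I expect the only genuine difficulty to lie in the second step: pinning down the correct change of variables and checking, leg by leg, that the quasitriangularity and antipode relations collapse $(\ast)$ into the displayed equation. This computation is the exact dual of the one performed for \cite[Lemma 2.9]{zhang} --- obtained by replacing $H^*$-comodule structures by $H$-module structures throughout --- so one could alternatively just invoke that result. The remaining steps (identifying the cotensor product with a subspace of $X \otimes Y$ and substituting the structure maps) are routine.
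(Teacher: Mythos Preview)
Your proposal is correct and follows essentially the same approach as the paper: both identify the cotensor condition as the equalizer $(\ast)$ written in terms of the $\tr$-bicomodule structures $\chi^\pm$, and then use the dictionary of Lemma \ref{lemmaydistrcom} between these and the Yetter--Drinfeld coaction to show $(\ast)$ is equivalent to the displayed identity. The only cosmetic difference is that the paper carries out the two implications separately by direct element-wise manipulation (converting $x\hm{-1}$ to $x\brm{1}$, applying the cotensor relation, and converting $y\blm{-1}$ back to $y\hm{-1}$, and vice versa), whereas you package the equivalence as ``apply a single bijective change of variables''; the underlying computation is the same.
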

\begin{proof}
Given $\sum {x_i} \ot {y_i} \in X \ct_{_RH} Y$, using the identification in Lemma \ref{lemmaydistrcom}, we get
\begin{align*}
&\sum {x_i}\hm{-1}R^2 \ot {x_i}\hm{0} \ot R^1 \cdot {y_i}	\\
&=\sum S(r^1) {x_i}\brm{1} R^2 \ot r^2\cdot {x_i}\brm{0} \ot R^1 \cdot {y_i}	\\
&=\sum S(r^1) {y_i}\blm{-1} R^2 \ot r^2\cdot {x_i} \ot R^1 \cdot {y_i}\blm{0}	\\
&=\sum S(r^1) {y_i}\hm{-1}S(P^2) R^2 \ot r^2\cdot {x_i} \ot R^1P^1 \cdot {y_i}\hm{0}\\
&=\sum S(r^1) {y_i}\hm{-1} \ot r^2\cdot {x_i} \ot  \cdot {y_i}\hm{0}\\
\intertext{Conversely, suppose $\sum {x_i} \ot {y_i}$ belongs to the set on the right hand side, then}
&\sum \chi^+({x_i}) \ot {y_i} = \sum R^2 \cdot {x_i}\hm{0} \ot R^1{x_i}\hm{-1} \ot {y_i}	\\
&=\sum R^2 \cdot {x_i}\hm{0} \ot R^1 {x_i}\hm{-1} u^2 S(v^2) \ot v^1 u^1 \cdot {y_i} \\
&=\sum R^2u^2 \cdot {x_i} \ot R^1 S(u^1) {y_i}\hm{-1} S(v^2) \ot v^1 \cdot {y_i}\hm{0} \\
&=\sum {x_i} \ot {y_i}\hm{-1} S(v^2) \ot v^1 \cdot {y_i}\hm{0} = \sum {x_i} \ot \chi^-({y_i})
\end{align*}
\end{proof}
This implies that we can define a coaction on $X \ct_{_RH} Y$
\[
\lambda (\sum {x_i} \ot {y_i}) = \sum {x_i}\hm{-1}R^2 \ot {x_i}\hm{0} \ot R^1 \cdot {y_i} = \sum S(R^1) {y_i}\hm{-1} \ot R^2\cdot {x_i} \ot {y_i}\hm{0}
\]
Together with the diagonal H-action
\[
h \cdot \sum {x_i} \ot {y_i} = \sum h_1 \cdot x_i \ot h_2 \cdot y_i	
\]
this will define a left-left Yetter-Drinfeld module structure on $X \ct_{_RH} Y$. Indeed
\begin{align*}
&\lambda(h \cdot (\sum {x_i} \ot {y_i})) = \sum \lambda (h_1 \cdot {x_i} \ot h_2 \cdot {y_i})	\\
&=\sum S(R^1) (h_2 \cdot {y_i})\hm{-1} \ot R^2h_1\cdot {x_i} \ot (h_2 \cdot {y_i})\hm{0}\\
&=\sum S(R^1) h_2 {y_i}\hm{-1} S(h_4) \ot R^2 h_1 \cdot {x_i} \ot h_3 \cdot {y_i}\hm{0}\\
&=\sum h_1 S(R^1) {y_i}\hm{-1} S(h_4) \ot h_1 R^2 \cdot {x_i} \ot h_3 \cdot {y_i}\hm{0}\\
&=\sum h_1 (S(R^1) {y_i}\hm{-1}) S(h_3) \ot h_2 \cdot (R^2 \cdot {x_i} \ot {y_i}\hm{0})
\end{align*}
The corresponding $_RH$-bicomodule structure is the natural $_RH$-structure of the \mbox{cotensor} product. E.g.
\begin{align*}
&\sum \chi^-_{X \ct_{_RH} Y}({x_i} \ot {y_i}) = \sum ({x_i} \ot {y_i})\hm{-1} S(R^2) \ot R^1 \cdot ({x_i} \ot {y_i})\hm{0}	\\
&=\sum {x_i}\hm{-1}r^2S(R^2) \ot R^1 \cdot ({x_i}\hm{0} \ot r^1 \cdot {y_i})	\\
&=\sum {x_i}\hm{-1}r^2S(P^2)S(R^2) \ot R^1 \cdot {x_i}\hm{0} \ot P^1r^1 \cdot {y_i}	\\
&=\sum {x_i}\hm{-1}S(R^2) \ot R^1 \cdot {x_i}\hm{0} \ot {y_i}	= \sum \chi^-_X({x_i}) \ot {y_i}
\end{align*}
and similarly $\chi^+_{X \ct_{_RH} Y} = X \ot \chi^+_Y$. Thus we could have deduced the Yetter-Drinfeld module structure from the natural $\tr$-bicomodule structure as well. 

Particularly for $\pi(A) \ct_{_RH} Z $, we get
\begin{align}\label{defpiaydmod}
\lambda (\sum c_i \ot z_i) = \sum {c_i}_{-1}R^2 \ot {c_i}_{0} \ot R^1 \cdot z_i = \sum SR^1 {z_i}_{-1} \ot R^2\cdot c \ot {z_i}_0
\end{align}
for $\sum c_i \ot z_i \in \pi(A) \ct_{_RH} Z $.
\\
The following statement is inspired by \cite[Lemma 4.5]{zhang}.

\begin{theorem}\label{propaotzapictz}
Let $A$ be an $H^*$-Galois Azumaya algebra in $\HM$ and let $Z$ be a left-left Yetter-Drinfeld module structure. Then
\[
(A \ot Z)^A = \pi(A) \ct_{\tr} Z 
\]
as left-left Yetter-Drinfeld modules.
\end{theorem}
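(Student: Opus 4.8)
The plan is to prove the two assertions packaged in the statement separately: that the underlying sets coincide as subspaces of $A\ot Z$, and that the Yetter--Drinfeld structures they carry coincide. The second one is essentially free. As noted right after \eqref{endofunctor}, the structure on $(A\ot Z)^A$ is the restriction of \eqref{defazydmod}, i.e.\ the diagonal $H$-action $h\cdot(a\ot z)=h_1\cdot a\ot h_2\cdot z$ together with $\lambda(a\ot z)=S(R^1)\,z\hm{-1}\ot R^2\cdot a\ot z\hm{0}$; and \eqref{defpiaydmod} equips $\pi(A)\ct_{\tr}Z$ with the diagonal $H$-action together with $\lambda(\sum c_i\ot z_i)=\sum S(R^1)\,{z_i}\hm{-1}\ot R^2\cdot c_i\ot{z_i}\hm{0}$. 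These are literally the same formulas --- the only datum involved on the $\pi(A)$-leg is the $H$-action, which on $\pi(A)$ is the restriction of the one on $A$ since $\pi(A)$ is an $H$-submodule of $A$ --- so once the two sets are identified, nothing further has to be checked. Hence the whole content of the theorem is the set-theoretic equality $(A\ot Z)^A=\pi(A)\ct_{\tr}Z$ inside $A\ot Z$.

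For that equality I would first unwind the condition \eqref{endofunctor} defining $(A\ot Z)^A$ by means of the explicit $A^e$-module structure \eqref{defazaemod}. Taking $b=1$ and annihilating the $R$-matrices with \eqref{qt1} and the counit axioms gives $(a\ot\ol{1})\bullet(c\ot z)=ac\ot z$, whereas directly $(1\ot\oa)\bullet(c\ot z)=(R^2\cdot c)\bigl(r^2S\inv(z\hm{-1})R^1\cdot a\bigr)\ot r^1\cdot z\hm{0}$. Thus $\sum_i c_i\ot z_i\in(A\ot Z)^A$ if and only if
\[
\sum_i ac_i\ot z_i=\sum_i (R^2\cdot c_i)\bigl(r^2S\inv({z_i}\hm{-1})R^1\cdot a\bigr)\ot r^1\cdot{z_i}\hm{0}\qquad\text{for all }a\in A.
\]
Specializing $a$ to an element of $A_0={}_HA$, on which $H$ acts through $\va$, the right-hand side collapses (again by \eqref{qt1} and the counit axioms) to $\sum_i c_ia\ot z_i$; writing $\sum_i c_i\ot z_i$ with the $z_i$ linearly independent over $k$, this forces $ac_i=c_ia$ for all $a\in A_0$, i.e.\ $c_i\in C_A(A_0)=\pi(A)$, and conversely, for $c_i\in\pi(A)$ the instances $a\in A_0$ hold automatically. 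So the remaining task is to show that, for $\sum_i c_i\ot z_i\in\pi(A)\ot Z$, the displayed identity ``for all $a\in A$'' is equivalent to the cotensor condition
\[
\sum_i{c_i}\hm{-1}R^2\ot{c_i}\hm{0}\ot R^1\cdot z_i=\sum_i S(R^1)\,{z_i}\hm{-1}\ot R^2\cdot c_i\ot{z_i}\hm{0}.
\]

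The bridge between the two is the Miyashita--Ulbrich structure of the $H^*$-Galois extension $A/A_0$. By construction the left $H$-coaction on $\pi(A)$ is dual to the MU $H^*$-action $c\lhu h^*=x_i(h^*)\,c\,y_i(h^*)$ with $x_i(h^*)\ot y_i(h^*)=can\inv(1\ot h^*)$, and this definition is tailored precisely so that, for $a\in A$ and $c\in\pi(A)$, both $ac$ and $ca$ are re-expressible in terms of the $H$-action on $a$ and the left $H$-coaction of $c$ (in the quantum-commutative case this is the relation underlying \eqref{eqqc}). Applying these re-expressions to both sides of the ``for all $a\in A$'' identity --- legitimate since $R^2\cdot c_i$ again lies in $\pi(A)$, the latter being an $H$-submodule of $A$ --- rewrites it as an identity in which $a$ occurs linearly through the $H$-action alone; bijectivity of the canonical morphism of the $H^*$-Galois extension then lets one strip off $a$ and match the resulting comodule identity with the cotensor condition. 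I expect this last step --- keeping track simultaneously of the several occurrences of the $R$-matrix, of the transmutation coaction on $Z$, of the MU-action and of the $H^*$-Galois canonical map --- to be the main obstacle; it is most comfortably carried out in the graphical calculus used throughout the paper, in the spirit of \cite[Lemma 4.5]{zhang} and of the diagram chase around \eqref{diaomcomodalg}. Everything else is a routine unwinding of definitions.
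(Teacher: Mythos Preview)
Your outline is correct and matches the paper's approach in all essential points: reduce to the set-theoretic equality, unwind \eqref{defazaemod} to the identity $\sum ac_i\ot z_i=\sum(R^2\cdot c_i)(r^2S\inv({z_i}\hm{-1})R^1\cdot a)\ot r^1\cdot{z_i}\hm{0}$, specialize to $a\in A_0$ to force $c_i\in\pi(A)$, and then relate the remaining ``for all $a$'' condition to the cotensor condition via the Miyashita--Ulbrich structure. The Yetter--Drinfeld compatibility is indeed automatic for the reason you give.

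The only place where the paper is more concrete than your sketch is your final step. Rather than ``stripping off $a$'' in one blow, the paper treats the two inclusions separately. For $\pi(A)\ct_{\tr}Z\subset(A\ot Z)^A$ it uses directly the MU relation $c\,b=(c\hm{-1}\cdot b)\,c\hm{0}$, valid for $c\in\pi(A)$ and arbitrary $b\in A$ (this is what \eqref{eqqc} amounts to here), to collapse $(1\ot\oa)\bullet(c\ot z)$ to $ac\ot z$ after invoking the cotensor identity. For the converse inclusion it does not quantify over $a$ and then cancel, but rather \emph{substitutes} a specific $a$: writing $can\inv(1\ot h^*)=x_i(h^*)\ot y_i(h^*)$, one plugs $a=x_i(h^*_2)$ into $(\ddagger)$, right-multiplies by $y_i(h^*_2)$, and uses the Galois identity $(h\cdot x_i(h^*))y_i(h^*)=\langle h^*,S(h)\rangle 1_A$ together with $c\lhu h^*=x_i(h^*)cy_i(h^*)$ to land exactly on the cotensor condition evaluated against $h^*$. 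This is what your phrase ``bijectivity of the canonical morphism lets one strip off $a$'' should cash out to; no graphical calculus is actually needed, the computation stays within a few lines of Sweedler-type algebra.
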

\begin{proof}
Let $\sum c_i \ot z_i \in \pi(A) \ct_{\tr} Z$ and denote $c \ot z = \sum c_i \ot z_i$, then
\begin{equation*}
\begin{split}
\chi^+ (c) \ot z &= c \ot \chi^-(z)		\\
R^2 \cdot c\hm{0} \ot R^1 c\hm{-1} \ot z &= c \ot z\hm{-1} S(R^2) \ot R^1 \cdot z\hm{0}
\end{split}\tag{$\dagger$}
\end{equation*}
Observe that
\begin{align*}
&(1 \ot \oa) \bullet (c \ot z) = (R^2\cdot c)(r^2 S\inv (z\hm{-1})R^1 \cdot a) \ot r^1 \cdot z\hm{0}	\\
&=(R^2 r^2 \cdot c\hm{0})(S\inv (c\hm{-1}) S\inv (r^1) R^1 \cdot a) \ot z	\tag*{by ($\dagger$)}\\
&=c\hm{0}(S\inv (c\hm{-1}) \cdot a) \ot z	\\
&=(c\hm{-1}S\inv (c\hm{-2}) \cdot a)c\hm{0}	\ot z	\by{eqqc}\\
&=ac \ot z = (a \ot 1) \bullet (c \ot z)
\end{align*}
whence $\pi(A) \ct_{\tr} Z \subset (A \ot Z)^A$. Conversely let $c \ot z = \sum c_i \ot z_i \in (A \ot Z)^A$. Then
\begin{align*}
ac \ot z = (R^2\cdot c)(r^2 S\inv (z\hm{-1})R^1 \cdot a) \ot r^1 \cdot z\hm{0}	\tag*{($\ddagger$)}
\end{align*}
for all $a \in A$. But then by \eqref{eqazero}, we obtain $ac \ot z = ca \ot z$ for all $a \in A_0$. Thus $c \ot z \in \pi(A) \ot Z$. As $A$ is assumed to be $H^*$-Galois, we know that $\pi(A) \in \dy$ where $c \lhu h^* = x_i(h^*) c y_i(h^*)$. With the identification $\dy = \yd$ from Lemma \ref{lemmaydident} in mind, note that
\begin{align}
&x_i(h^*)y_i(h^*)\hrm{0} \ot y_i(h^*)\hrm{1} = 1_A \ot h^*	\tag*{by Lemma \ref{lemmagammaproperties}}	\nonumber
\intertext{or equivalently}
&x_i(h^*)\hrm{0}y_i(h^*) \ot x_i(h^*)\hrm{1} = 1_A \ot S(h^*)					\nonumber
\intertext{hence}
&x_i(h^*)\hrm{0}y_i(h^*) \ \langle x_i(h^*)\hrm{1} , h \rangle = 1_A \langle S(h^*), h \rangle			\nonumber
\intertext{or}
&(h \cdot x_i(h^*))y_i(h^*) = 1_A \langle h^*, S(h) \rangle		\label{eqxiyi}
\end{align}
for $h \in H$ and $h^* \in H^*$. We can now show that $c \ot z \in \pi(A) \ct_{\tr} Z$, that is $c \ot z$ satisfies $(\dagger)$. Let $h^* \in H^*$ and compute
\begin{align*}
&R^2 \cdot c\hm{0} \ot z \ \langle h^*, R^1 c\hm{-1} \rangle		\\
&=R^2 \cdot c\hm{0} \ot z \ \langle h^*_1, R^1 \rangle \langle h^*_2, c\hm{-1} \rangle	\\
&=R^2 \cdot (c \lhu h^*_2) \ot z \ \langle h^*_1, R^1 \rangle		\\
&=R^2 \cdot (x_i(h^*_2)c y_i(h^*_2)) \ot z \ \langle h^*_1, R^1 \rangle	\\
&=R^2 \cdot ((P^2\cdot c)(p^2S\inv (z\hm{-1})P^1 \cdot x_i(h^*_2)) y_i(h^*_2)) \ot p^1 \cdot z \ \langle h^*_1, R^1 \rangle	\tag*{by ($\ddagger$)}\\
&=(R^2P^2\cdot c) \ot p^1 \cdot z \ \langle h^*_1, R^1 \rangle \langle h^*_2, S(p^2S\inv (z\hm{-1})P^1) \rangle	\by{eqxiyi} \\
&=(R^2P^2\cdot c) \ot p^1 \cdot z \ \langle h^*, R^1 S(P^1) z\hm{-1} S(p^2) \rangle	\\
&=c \ot p^1 \cdot z \ \langle h^*,  z\hm{-1} S(p^2) \rangle
\end{align*}
Hence $c \ot z \in \pi(A) \ct_{\tr} Z$.
\\
As $\pi(A)$ is an $H$-submodule of A and both objects have the diagonal $H$-action, the equality is H-linear. The $H$-colinearity is clear from the definition of the $H$-coactions in \eqref{defazydmod} and \eqref{defpiaydmod}. Moreover, this illustrates why in the definition of $\lambda_{A \ot Z}$ we have opted for $\lambda_2$ instead of $\lambda_1$.
\end{proof}

As a consequence of Theorem \ref{propaotzapictz} and Proposition \ref{qcbigalisequiv}, we obtain the following characterization of the $\tr$-bi-Galois object $\pi(A)$ for an $H^*$-Galois Azumaya algebra $A$.

\begin{corollary}\cite[Lemma 4.5]{zhang}\label{piaisaottra}
Let $A$ be an Azumaya algebra in $\HM$ such that it is $H^*$-Galois over $A_0$. Then
\[
\pi(A) \cong (A \ot \tr)^A
\]
as left-left Yetter-Drinfeld module algebras, or equivalently, as $\tr$-bicomodule algebras. Thus they represent the same object in the group of quantum commutative Galois objects $\galqc$.
\end{corollary}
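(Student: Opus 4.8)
The plan is to deduce the corollary from Theorem \ref{propaotzapictz} by specialising $Z$ to $\tr$ itself, regarded as a left-left Yetter-Drinfeld module via Lemma \ref{lemmaydistrcom}, i.e. as the regular $\tr$-bicomodule (left and right coaction both $\ud$). Theorem \ref{propaotzapictz} then gives, as subobjects of $A \ot \tr$ and hence as left-left Yetter-Drinfeld modules,
\[
(A \ot \tr)^A = \pi(A) \ct_{\tr} \tr .
\]
Since $\pi(A)$ is a $\tr$-bi-Galois object, it is in particular a cocommutative $\tr$-bicomodule, so by Lemma \ref{lemmaydistrcom} it is enough to promote this to an isomorphism of $\tr$-bicomodule algebras $\pi(A) \cong (A\ot\tr)^A$.

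The first step is to treat the comodule-algebra side. The counit map $\chi^+_{\pi(A)} : \pi(A) \to \pi(A) \ct_{\tr} \tr$, with inverse induced by $\pi(A) \ot \epsilon_{\tr}$, is an isomorphism of $\tr$-bicomodule algebras once $\pi(A) \ct_{\tr} \tr$ carries the cotensor-product algebra structure: it is right $\tr$-colinear by coassociativity, left $\tr$-colinear because $\chi^+_{\pi(A)}$ and $\chi^-_{\pi(A)}$ commute, and multiplicative because the cotensor product of two comodule algebras over the cocommutative cocentral braided Hopf algebra $\tr$ is again a comodule algebra, with multiplication induced from the braided tensor product $\pi(A)\ot\tr$ --- this is the comodule-side counterpart of the braided bi-Galois statements in \cite{schauenburgbr2} (compare Proposition \ref{propahisbicomodalg}).

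The remaining, and I expect the only non-routine, point is to match the algebra structure that $(A\ot\tr)^A$ carries through the Azumaya equivalence $A \overset{\sim}{\ot} - : \yd \rightleftarrows {}_{\Ae}(\yd) : (-)^A$ of \cite{cvozyetter2} (the structure coming from $\tr$ being an algebra in $\HM$) with the cotensor-product algebra structure on $\pi(A) \ct_{\tr} \tr$ transported along the equality of Theorem \ref{propaotzapictz}. This is a direct computation: one evaluates both products on general elements of $(A\ot\tr)^A$ and compares, using the defining relation $(\dagger)$ from the proof of Theorem \ref{propaotzapictz} (which characterises membership in $\pi(A)\ct_{\tr}\tr$), the quantum commutativity \eqref{eqqc} of $A$ along $\pi(A)$, and the quasitriangularity identities \eqref{qt1}--\eqref{qt4} together with \eqref{eqqybe1} to move the $R$-matrices past the coactions. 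These manipulations are of the same kind as, and no harder than, the verification of diagram (IV) in \eqref{diaomcomodalg} or the colinearity computation closing the proof of Lemma \ref{lemmaazaeydmod}; the real work is the bookkeeping needed to see that the two normalisations agree, and this is where I would expect to spend the most effort.

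Once $\pi(A) \cong (A\ot\tr)^A$ is established as an isomorphism of Yetter-Drinfeld module algebras (equivalently of $\tr$-bicomodule algebras), the last assertion is automatic: by the description of $\ppi$ recalled above, $\pi(A)$ represents $\ppi([A])$ in $\galqc$, and isomorphic quantum commutative $\tr$-bi-Galois objects define the same class, so $(A\ot\tr)^A$ represents the same element of $\galqc$.
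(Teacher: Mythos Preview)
Your approach is essentially the paper's: specialise Theorem~\ref{propaotzapictz} to $Z=\tr$ and identify $\pi(A)\ct_{\tr}\tr$ with $\pi(A)$. The paper, however, does not carry out the direct algebra-compatibility computation you outline; it simply invokes Proposition~\ref{qcbigalisequiv}, whose inverse $\alpha\mapsto\alpha(\tr)$ is built on Propositions~\ref{propahisbicomodalg} and~\ref{propamcongahctm} and therefore already guarantees that $(\pi(A)\ct_{\tr}-)(\tr)\cong\pi(A)$ as $\tr$-bicomodule algebras --- so the bookkeeping you anticipate in your ``non-routine'' step is absorbed into the general theory of Section~\ref{secbrequiv} and no separate verification is needed.
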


Now let $A$ be an arbitrary  Azumaya algebra in $\HM$. In the exact same manner as done in \eqref{endofunctor},  one may define the endo-functor of $\yd$:
\[
\ff_A : \yd \rightarrow \yd : Z \mapsto (A \ot Z)^A.
\]
If this functor is an autoequivalence, then $(A\ot \tr)^A$ is an $\tr$-bi-Galois object. At this moment, we don't know whether or not $F_A$ is a tensor autoequivalence of $\yd$ in case $A$ is not $H^*$-Galois over $A_0$.  Davydov and Nikshych constructed a group (iso)morphism Pic$(\C) \ra \Aut^{br}(\mathcal{Z}(\C),\C)$ for any braided tensor category $\C$ (under stronger {conditions}) in \cite{davydovnikshych}. In case $\C = \HM$ they would obtain a group (iso)morphism Pic$(\HM) \ra \Aut^{br}(\yd,\mr)$. We don't know whether their construction of the morphism coincides with ours. If that is the case, we may drop the Galois condition and choose any representative from a Morita equivalence. Nevertheless, one can show that
the functor $F_A$ above is trivializable on $\HM$. Indeed, if $X \in \mr$, \eqref{defazaemod} becomes
\begin{align*}
\gbeg{4}{5}
\got{2}{A\ot\oA}\got{2}{A\ot X} \gnl
\gcl{1}\gbr\gcl{1}\gnl
\gmu\gcl{1}\gcl{1}\gnl
\gwmuh{3}{2}{5}\gcl{1}\gnl
\gvac{1}\gob{1}{A}\gvac{1}\gob{1}{X}
\gend
\end{align*}
or $A \ot X = A \overset{\sim}{\ot} X$, where $(A \overset{\sim}{\ot} - , (-)^A)$ is the equivalence pair of $A$ being Azumaya. Thus $(A \ot X)^A = (A \overset{\sim}{\ot} X)^A \cong X$. 
In other words, the $\Ae$-module structure of $A \ot Z$ in \eqref{defazaemod} is equal to the $\Ae$-module structure of $A \overset{\sim}{\ot} Z$, with the following 'twist' inserted into it,
\begin{diagram}
\gbeg{2}{4}
\got{1}{A}\got{1}{Z}\gnl
\gibrc\gnl
\gbr\gnl
\gob{1}{A}\gob{1}{Z}
\gend
\end{diagram}
which is trivial whenever $Z \in \mr$.
\\\\
Composing the morphism $\ppi : \Br(\HM) \ra \galqc$ with the group isomorphism $\galqc \cong \autbr$ from Proposition \ref{qcbigalisequiv}, we obtain a group morphism 
\[
\Br(\HM) \ra \autbr
\]
which maps a class $[A]$ to the class of natural isomorphisms represented by $F_A$, where $A$ is chosen  as an $H^*$-Galois Azumaya algebra. By construction, the following diagram commutes.
\begin{align*}
\begin{diagram}
1 & \rTo	&	\Br(k)	&\rTo	&	\Br(\HM)			&		\rTo^{\phantom{aaa} \ppi}		&		\gqc{\tr}	\\
&&&&				&		\rdTo^{}		&		\dTo_{\sim}		\\
&&&&				&						&		\autbr
\end{diagram}
\end{align*}
In view of Proposition \ref{picisbr}, we obtain a group morphism
\begin{equation}\label{gpmap}
\tau~: \Pic(\HM) \ra \autbr, \ \ [A]\mapsto F_A, \ \mbox{A is $H^*$-Galois over $A_0$}
\end{equation}
The kernel of this morphism is, by construction, isomorphic to the Brauer group of $k$. The latter is, again by Proposition \ref{picisbr} (replace $H$ by the trivial Hopf algebra $k$), isomorphic to the Picard group Pic$({}_k\M)$ consisting of equivalence classes of one-sided invertible ${}_k\M$-bimodule categories. Let us summarize in the following theorem.

\begin{theorem}\label{bmautbrasequence}
Assume $(H,R)$ is a finite dimensional quasitriangular Hopf algebra. The following diagram commutes:

\begin{align*}
\begin{diagram}[size=2em]
1	& \rTo	&	\Br(k)	&\rTo	&	\BM(k,H,R)			&		\rTo^{\phantom{aa} \ppi}		&		\gqc{\tr}	\\
	&		&	\dTo^{\sim}&		&	\dTo^{\sim}		&										&		\dTo_{\sim}		\\
1	& \rTo	&	\Pic({}_k\M)	&\rTo	&	\Pic(\HM)		&		\rTo^{\tau}							&		\autbr
\end{diagram}
\end{align*}
\end{theorem}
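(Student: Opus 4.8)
The plan is to fit together the identifications already established and then check the one genuinely new statement, namely that the right-hand square commutes; the rest is formal.

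First, one records the three vertical isomorphisms. The middle one, $\BM(k,H,R)\cong\Pic(\HM)$, is Proposition~\ref{picisbr}; the right-hand one, $\gqc{\tr}\cong\autbr$, is Proposition~\ref{qcbigalisequiv}; and the left-hand one, $\Br(k)\cong\Pic(\kM)$, is Proposition~\ref{picisbr} applied to the trivial Hopf algebra $H=k$, using that $\BM(k,k,1)=\Br(k)$. The bottom-left arrow $\Pic(\kM)\to\Pic(\HM)$ is then taken to be the composite $\Pic(\kM)\cong\Br(k)\hookrightarrow\BM(k,H,R)\cong\Pic(\HM)$, so the left square commutes by construction; one checks that it agrees with the functor induced on Picard groups by the braided monoidal functor $\kM\to\HM$ equipping a $k$-module with the trivial $H$-action. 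Transporting along the three vertical isomorphisms, the exactness of the bottom row (injectivity of $\Pic(\kM)\to\Pic(\HM)$ and that its image is $\ker\tau$) is inherited verbatim from the exactness of \eqref{exactss}.

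Next, one checks the right-hand square, i.e. that $\tau$ agrees with $\ppi$ under the middle and right vertical isomorphisms. Let $A$ be an $H$-Azumaya algebra; by \cite[Corollary~4.2]{zhang} its Morita representative may be chosen $H^*$-Galois over $A_0=A^{coH^*}$, and then by \eqref{gpmap} the class $[A]$ is carried by $\tau$ to the autoequivalence $F_A=(A\ot-)^A$ of $\yd$. On the other hand $\ppi([A])=[\pi(A)]$, and by Proposition~\ref{qcbigalisequiv} the image of $[\pi(A)]$ in $\autbr$ is the functor $\pi(A)\ct_{\tr}-$. By Theorem~\ref{propaotzapictz} the endofunctor $F_A=(A\ot-)^A$ is naturally isomorphic to $\pi(A)\ct_{\tr}-$ (with Corollary~\ref{piaisaottra} recording the identification $\pi(A)\cong(A\ot\tr)^A$ at the level of algebras), so $\tau([A])$ equals the image of $\ppi([A])$. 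This is exactly the commutative triangle displayed just before \eqref{gpmap}, now completed to a square via Proposition~\ref{picisbr}. Commutativity of both squares, together with the three isomorphisms, then yields the theorem.

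The main obstacle, such as it is, lies in the first step: one must verify that the isomorphism $\Pic(\HM)\cong\BM(k,H,R)$ of Proposition~\ref{picisbr} is natural with respect to the trivial-action inclusion $\Br(k)\hookrightarrow\BM(k,H,R)$, rather than merely isomorphic term by term, so that the bottom-left arrow really is the expected base-change functor. This amounts to a routine compatibility check for the equivalences \eqref{eqcacb} and \eqref{eqacbc} and for the Deligne product; with it in hand, everything else is bookkeeping over the results cited above.
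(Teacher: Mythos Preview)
Your proposal is correct and follows essentially the same route as the paper: the three vertical isomorphisms come from Propositions~\ref{picisbr} and~\ref{qcbigalisequiv}, the right square commutes by Theorem~\ref{propaotzapictz} (identifying $F_A$ with $\pi(A)\ct_{\tr}-$), and the bottom row is obtained by transport so that exactness and the left square are automatic. The only difference in emphasis is your final paragraph: the paper does not treat the compatibility of the left square with an independently defined base-change map as an obstacle at all, because it simply defines the bottom-left arrow as the inclusion of $\ker\tau$ and identifies that kernel with $\Br(k)\cong\Pic(\kM)$; there is no separate ``natural'' map to reconcile with.
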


\begin{remark}
The motivation for the construction of the morphism $\tau$ comes from the result \cite[Lemma 4.5]{zhang}. We expect that our construction \eqref{gpmap} of $\tau$ is equal to the construction of the morphism $\Phi: \Pic(\C)\lra \Aut^{br}(\mathcal{Z}(\C),\C)$ by Davydov and Nikshych in \cite[Theorem 4.3]{davydovnikshych} in case $\C=\HM$, or at least the two morphisms are conjugated by group isomorphisms.
If that would be the case, this approach by tensor autoequivalences would render the surjectivity of $\ppi$, i.e. the right exactness of the sequence \eqref{exactss}, at least under the same conditions as in \cite{davydovnikshych}, where the field $k$ is assumed to be algebraically closed of characteristic 0 (so the classical Brauer group $\Br(k)$ is trivial) and the category $\C$ may have to be assumed to be semisimple (or fusion). Another advantage for the morphism $\tau$ would be that  the finiteness requirement for $H$ could be dropped so that the exact sequence \eqref{exactss} works for infinite dimensional QT Hopf algebras. 
\end{remark}

\hide{\begin{align*}
\begin{diagram}
Br(\HM)			&		\rTo^{\phantom{aa} \ppi}		&		\gqc{\tr}	\\	
\dTo^{\sim}		&						&		\dTo_{\sim}		\\
Pic(\HM)		&		\rTo^{}			&		\autbra
\end{diagram}
\end{align*}}

\section*{Acknowledgement}
The authors would like to thank P. Schauenburg for his valuable suggestions and comments, in particular, for pointing out the condition $(\mathbb{A})$ may be dropped in case $\C = \HM$.


\end{document}